\numberwithin{equation}{section}
\theoremstyle{plain}
\newtheorem{thm}{Theorem}[section]
\newtheorem{prop}[thm]{Proposition}
\newtheorem{lem}[thm]{Lemma}
\newtheorem{cor}[thm]{Corollary}
\newtheorem{conj}[thm]{Conjecture}
\theoremstyle{definition}
\newtheorem{df}[thm]{Definition}
\newtheorem{rem}[thm]{Remark}
\newtheorem{fact}[thm]{Fact}
\newtheorem{ex}[thm]{Example}
\newtheorem{prob}[thm]{Problem}
\newcommand{\FF}{\mathbb{F}}
\newcommand{\ZZ}{\mathbb{Z}}
\newcommand{\NN}{\mathbb{N}}
\DeclareMathOperator{\wt}{wt}
\DeclareMathOperator{\BG}{G}
\DeclareMathOperator{\ord}{o}
\DeclareMathOperator{\dist}{dist}
\DeclareMathOperator{\diam}{diam}
\begin{document}

\title{The Tutte polynomials of genus $g$}
\author{
Tsuyoshi Miezaki\thanks{Faculty of Science and Engineering, 
Waseda University, 
Tokyo 169-8555, Japan 
miezaki@waseda.jp
},
Manabu Oura\thanks{Graduate School of Natural Science and Technology, 
Kanazawa University,  
Ishikawa 920--1192, Japan 
oura@se.kanazawa-u.ac.jp
},\\
Tadashi Sakuma\thanks{Faculty of Science, 
Yamagata University,  
Yamagata 990--8560, Japan 
sakuma@sci.kj.yamagata-u.ac.jp
}, and 
Hidehiro Shinohara\thanks
{
shino.set.set.set@gmail.com
}
}


\date{}

\maketitle

\begin{abstract}
In the paper [Proceedings of the Japan Academy, Ser. A Mathematical Sciences, 95(10) 111-113],
the authors introduce the concept of the Tutte polynomials of genus $g$ and 
announce that each matroid $M$ can be reconstructed from its Tutte
polynomial of genus $|\mathcal{B}(M)|$, where $\mathcal{B}(M)$ denotes
the family of bases of $M$. In that paper, we also announced that, for all $g$,
there exist inequivalent matroids that have the same Tutte polynomial of genus $g$. 
In this paper, we prove these theorems.
\end{abstract}

\noindent
{\small\bfseries Key Words and Phrases.}
matroid, Tutte polynomial.\\ \vspace{-0.15in}

\noindent
2010 {\it Mathematics Subject Classification}.
Primary 05B35; 
Secondary 05C35; 94B05; 05C31. \\ \quad


\section{Introduction}\label{sec:Intro}

Let $E$ be a finite set. A \textit{matroid $M$} on $E=E_M$ is a pair 
$(E,\mathcal{I})$, where $\mathcal{I}$ is a non-empty 
family of subsets of $E$ with the following three conditions: 
\begin{itemize}
\item[{\bf{(I1)}}] 
$\emptyset \in \mathcal{I}.$
\item[{\bf{(I2)}}] 
If $I\in \mathcal{I}$ and $J\subset I$, then $J\in \mathcal{I}$;
\item[{\bf{(I3)}}] 
If $I_1,I_2\in \mathcal{I}$ and $|I_1|<|I_2|$, then there exists 
$e\in I_2\setminus I_1$ such that $I_1\cup \{e\}\in \mathcal{I}$. 
\end{itemize}
For a matroid $M$, the set $E_M$ is called a \textit{ground set} of $M$. 
The family $\mathcal{I}$ is called the \textit{independent sets}. 
A matroid $(E, \mathcal{I})$ is \textit{isomorphic} to another matroid 
$(E', \mathcal{I}')$ 
if there is a bijection $\varphi$ from $E$ to $E'$ such that 
$\varphi(I)\in \mathcal{I}'$ holds for each member $I\in \mathcal{I}$, 
and  
$\varphi^{-1}(I')\in \mathcal{I}$ holds for each member $I'\in \mathcal{I}'$. 

It follows from the condition (I3) 
that all maximal independent sets 
in a matroid $M$ have the same cardinality $k$, 
called the \textit{rank} of $M$. The maximal independent sets 
$\mathcal{B}(M)$ are called the \textit{bases} of $M$. 
The \textit{rank} $\rho(A)$ of an arbitrary subset $A$ of $E$ is the cardinality of the largest independent set contained in $A$. 
A \textit{circuit} in a matroid $M$ is a minimal dependent subset of 
$E$.
A \textit{loop} in a matroid $M$ is a singleton $\{e\} \subset E$ such that $\rho(\{e\})=0$,
that is, a circuit of size $1$.
Let us denote the family of circuits of $M$ by $\mathcal{C}(M)$. 
There are many other known axiomatic systems of matroids, all of which are equivalent. 
The following axiomatic system for circuits is also frequently used in this paper.
\begin{itemize}
\item[{\bf{(C1)}}] 
$\emptyset \not\in \mathcal{C}(M).$
\item[{\bf{(C2)}}] 
If $C_1$ and $C_2$ are members of $\mathcal{C}(M)$ and $C_1 \subseteq C_2$, then $C_1=C_2$;
\item[{\bf{(C3)}}] 
If $C_1,C_2 \in \mathcal{C}(M)$, $C_1\neq C_2$, and $e \in C_1 \cap C_2$, then there is 
a member $C_3$ of $\mathcal{C}(M)$ such that $C_3 \subseteq (C_1\cap C_2) \setminus \{e\}$. 
\end{itemize}
The above condition (C3) is called the \textit{circuit elimination axiom}. 

For a matroid $M$, there exists a unique matroid $M^{*}$ such that 
$E_{M^{*}}=E_{M}$, 
and that a set $S\subset E_{M^{*}}$ is independent if and only if 
there exists a basis $B\in \mathcal{B}(M)$ which satisfies 
$S\cap B=\emptyset$. 
Such matroid $M^{*}$ is  called the \textit{dual matroid} of $M$.
The bases of $M^{*}$ is defined by $\{E_M\setminus B | B \in \mathcal{B}(M)\}$,
which is called the \textit{cobases} of $M$. 
It is known that $(M^{*})^{*}=M$.
A \textit{cocircuit} and a \textit{coloop} of a matroid $M$ mean
a circuit and a loop of $M^{*}$, respectively.
The rank of $M^{*}$ is called the \textit{corank} of $M$.

A matroid is \textit{self-dual} if $M^{*}$ is isomorphic to $M$.

We give examples.
\begin{ex}
Let $A$ be a $k\times n$ matrix over a field. 
This gives a matroid $M$ on the set $E=\{1,\ldots,n\}$, in which a set 
$\ell$ is independent if and only if the family of columns of $A$ whose indices belong to $\ell$ is linearly independent. 
Such a matroid is called a vector matroid. 
\end{ex}
\begin{ex}
Let $E=\{1,\ldots,n\}$ and $r$ a non-negative integer. 
We define a matroid on $E$ by taking every $r$-element subset of 
$E$ to be a base. 
This is known as the uniform matroid $U_{r,n}$. 
\end{ex}

A matroid is \textit{empty} if it is isomorphic to $U_{0,0}$;
otherwise it is \textit{non-empty}. 
A matroid $M=(E,\mathcal{I})$ is \textit{separable} if there exists two
non-empty matroids $M_1=(E_1,\mathcal{I}_1)$ and $M_2=(E_2,\mathcal{I}_2)$
with $E_1$ and $E_2$ disjoint such that $E:=E_1\dot{\cup}E_2$ and
$\mathcal{I}:=\{I_1\dot{\cup}I_2 \mid I_1\in\mathcal{I}_1,I_2\in\mathcal{I}_2\}$
hold. In this case, let us denote $M=M_1\dot{\cup}M_2$. 
A matroid is \textit{non-separable} if it is not separable.
It is known that any matroid can be uniquely decomposed into non-separable matroids,
which are called the \textit{components} of the matroid. 

A matroid $M=\dot{\cup}M_i$ is \textit{equivalent} to another matroid 
$M^{\prime}=\dot{\cup}M^{\prime}_i$ if, with reordering if necessary, each $M^{\prime}_{i}$ is
isomorphic to either $M_{i}$ or $M^{\ast}_{i}$. 

Let $M$ be a matroid on the set $E$, having rank function $\rho$. 
The Tutte polynomial of $M$ is defined as follows: 
\[
T(M;x,y)=\sum_{A\subset E}(x-1)^{\rho(E)-\rho(A)}(y-1)^{|A|-\rho(A)}. 
\]
By definition of the dual matroid, $T(M^{*};x,y)=T(M;y,x)$ holds. 

For example, the Tutte polynomial of the uniform matroid $U_{r,n}$ is
\[
T(U_{r,n};x,y) =
\sum_{i=0}^r
\binom{n}{i}(x-1)^{r-i} +
\sum_{i=r+1}^n
\binom{n}{i}(y-1)^{i-r}.
\]
It is well known that, from the Tutte polynomial of a matroid $M$, we can read out 
all of the rank, the corank, the number of loops, the number of the coloops,
the number of bases (cobases) of $M$ (\cite{Welsh}).

It is easy to show that $T(M;x,y)$ is a matroid invariant and 
if two matroids have the same Tutte polynomial, 
we call them \textit{$T$-equivalent}.
It is also known that there exist two non-isomorphic matroids, 
which are $T$-equivalent (\cite{Welsh}).

This gives rise to a natural question: is there
a generalization of the Tutte polynomial?
We now present the concept of the Tutte polynomial of genus $g$. 
(This definition already appeared in the announcement paper \cite{MOSS}.)
\begin{df}
Let $M:=(E,\mathcal{I})$ be a matroid. Let $(A_i\mid i=1,\ldots,g)$ be an
arbitrary $g$-tuple of subsets of $E$. Let {$\Lambda_1:=\{1,\ldots,g\}$} and 
let {$\Lambda_2:=\displaystyle{{\Lambda_1} \choose 2}$}. For every element $\lambda \in {\Lambda_2}$,
let us denote {$A_{\cap(\lambda)}:=\cap_{i\in\lambda}A_i$} and {$A_{\cup(\lambda)}:=\cup_{i\in\lambda}A_i$}.
By using these notations, let us define
\begin{eqnarray*}
T^{(g)}(A_1,\ldots,A_g)&:=&
\prod_{\lambda \in {\Lambda_1}}(x_{\lambda}-1)^{\rho(E)-\rho(A_{\lambda})}(y_{\lambda}-1)^{|A_{\lambda}|-\rho(A_{\lambda})}\\
&&\times \prod_{\lambda \in {\Lambda_2}}(x_{\cap(\lambda)}-1)^{\rho(E)-\rho(A_{\cap(\lambda)})}(y_{\cap(\lambda)}-1)^{|A_{\cap(\lambda)}|-\rho(A_{\cap(\lambda)})}\\
&&\times \prod_{\lambda \in {\Lambda_2}}(x_{\cup(\lambda)}-1)^{\rho(E)-\rho(A_{\cup(\lambda)})}(y_{\cup(\lambda)}-1)^{|A_{\cup(\lambda)}|-\rho(A_{\cup(\lambda)})}
\end{eqnarray*}
Then the genus $g$ Tutte polynomial $T^{(g)}(M)$ of the matroid $M$ will be
defined as follows:
\begin{align}
T^{(g)}(M)
&:=T^{(g)}(M;x_{\lambda_1},y_{\lambda_1},x_{\cap\lambda_2},y_{\cap\lambda_2},x_{\cup\lambda_2},y_{\cup\lambda_2}
:\lambda_1\in \Lambda_1,\lambda_2\in \Lambda_2) \label{definition_genus_Tutte} \\
&:=\sum_{A_1,\ldots,A_n \subseteq E}T^{(g)}(A_1,\ldots,A_g)\nonumber
\end{align}
\end{df}

\noindent
For any positive integer $g$, we have 
$$T^{(g)}(M)=\frac{1}{2^{|E_M|}}T^{(g+1)}(M)|_{x_{A_{g+1}}=2,y_{A_{g+1}}=2,x_{A_j\cap A_{g+1}}=2 (j=1,\ldots,g),y_{A_k\cup A_{g+1}}=2 (k=1,\ldots,g)}.$$
It is straightforward to see that 
$T^{(g)}$ is a matroid invariant.
For example, 
the genus $2$ Tutte polynomial $T^{(2)}(M)$ of the matroid $M$ is 
as follows: 
\begin{equation*}
  \begin{split}
&T(M;x_1, x_2, x_{\cap\{1,2\}}, x_{\cup\{1,2\}}, y_1, y_2, y_{\cap\{1,2\}}, y_{\cup\{1,2\}})\\
&=\sum_{A_1,A_2\subset E}
(x_1-1)^{\rho(E)-\rho(A_1)}
(x_2-1)^{\rho(E)-\rho(A_2)}\\
&\qquad \qquad ~~
(x_{\cap\{1,2\}}-1)^{\rho(E)-\rho(A_1\cap A_2)}
(x_{\cup\{1,2\}}-1)^{\rho(E)-\rho(A_1\cup A_2)}\\
&\qquad \qquad ~~
(y_1-1)^{|A_1|-\rho(A_1)}
(y_2-1)^{|A_2|-\rho(A_2)}\\
&\qquad \qquad ~~
(y_{\cap\{1,2\}}-1)^{|A_1\cap A_2|-\rho(A_1\cap A_2)}
(y_{\cup\{1,2\}}-1)^{|A_1\cup A_2|-\rho(A_1\cup A_2)}.
\end{split}
\end{equation*}

The Whitney rank generating function is defined as follows:
\[
R(M;x,y)=\sum_{A\subset E}x^{\rho(E)-\rho(A)}y^{|A|-\rho(A)}. 
\]
For every matroid $M$, its Tutte polynomial and its Whitney rank
generating function are equivalent under a simple change of variables:
\[
R(M;x,y)=T(M;x+1,y+1).
\]
We also naturally define the genus $g$ Whitney rank generating
function $R^{(g)}(M)$ as follows:
\begin{df}
We will reuse the notations in the definition of the Tutte polynomial
of genus $g$. By using these notations, let us define
\begin{align*}
R^{(g)}(M)
&:=R^{(g)}(M;x_{\lambda_1},y_{\lambda_1},x_{\cap\lambda_2},y_{\cap\lambda_2},x_{\cup\lambda_2},y_{\cup\lambda_2}
:\lambda_1\in \Lambda_1,\lambda_2\in \Lambda_2)\\
&=T^{(g)}(M;x_{\lambda_1}+1,y_{\lambda_1}+1,x_{\cap\lambda_2}+1,y_{\cap\lambda_2}+1,x_{\cup\lambda_2}+1,y_{\cup\lambda_2}+1
:\lambda_1\in \Lambda_1,\lambda_2\in \Lambda_2).
\end{align*}
For every $g$-tuple $(A_i\mid i=1,\ldots,g)$ of subsets of $E_M$, 
$R^{(g)}(A_1,\ldots,A_g)$ also can be defined naturally.
\end{df}
It is clear that $R^{(g)}(M^{*};x,y)=R^{(g)}(M;y,x)$ holds for every positive integer $g$, again. 

To state our results, 
we introduce the following concepts. 
If two matroids have the same Tutte polynomial for genus $g$, 
we call them $T^{(g)}$-equivalent. 
More generally, we introduce the following: 
\begin{df}
Let $\mathcal{M}$ be the set of all matroids. 
For $X\subset \mathcal{M}$, we call $X$ is {\sl of class $T^{(g)}$} if 
$T^{(g)}$ is a complete invariant for $X$. 
\end{df}
It is not difficult to see that each matroid $M=(E,\mathcal{I})$ can be reconstructed from its Tutte polynomial
of genus $|E|+|\mathcal{B}(M)|$. Hence the set of Tutte polynomials {$\{T^{(g)}\}_{g=1}^{\infty}$} is
a complete invariant for matroids.

Here we state the following problem: 
\begin{prob}
For $X\subset \mathcal{M}$, 
determine an appropriate small upper bound $g$ such that $X$ is of class $T^{(g)}$. 
\end{prob}

In the paper \cite{MOSS}, the authors announced that each matroid $M$ can be
reconstructed from its Tutte polynomial of genus $|\mathcal{B}(M)|$ and that, 
for all $g$, there exist inequivalent matroids that have the same Tutte
polynomial of genus $g$. 
For a real number $r$, $\lceil r \rceil$ denotes 
the minimum integer at least $r$. 
In this paper, we prove these statements.
\begin{thm}\label{thm:main}
\begin{enumerate}
\item [{\rm (1)}]
Every matroid $M$ can be reconstructed from its Tutte polynomial of genus $|\mathcal{B}(M)|$. 
Especially, for $g\in \NN$, the set of matroids 
$\{M\in \mathcal{M}\mid |\mathcal{B}(M)|\leq g\}$
is of class $T^{(g)}$. 
\item [{\rm (2)}]
For any positive integer $g$, there exist two 
matroids $M$ and $N$ such that
$T^{(g)}(M)=T^{(g)}(N)$ and 
$T^{(\lceil\sqrt{2}g \rceil)}(M) \neq T^{(\lceil\sqrt{2}g \rceil)}(N)$.
\end{enumerate}
\end{thm}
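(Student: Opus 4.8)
The plan is to carry out both parts in the Whitney form $R^{(g)}$, which is equivalent to $T^{(g)}$ under $x_\bullet\mapsto x_\bullet+1$, $y_\bullet\mapsto y_\bullet+1$ and makes the exponent bookkeeping transparent. First I would use the displayed genus-lowering recursion, iterated, to recover from $T^{(g)}(M)$ the ordinary polynomial $T^{(1)}(M)=T(M)$, and hence $|E|$, $\rho(E)$, the numbers of loops and coloops, and $b:=|\mathcal{B}(M)|$. Since any $T^{(g)}$ with $g\ge b$ can be specialized down to $T^{(b)}$ by the same recursion, the second sentence of part (1) reduces to the first, and it suffices to prove: among matroids with exactly $b$ bases and the above invariants fixed, $R^{(b)}$ determines $M$ up to isomorphism.

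For part (1) the core idea is to treat the $b$ labeled slots as a coordinate frame indexed by the bases. A subset $A$ makes its single factor trivial (both exponents $0$) exactly when $A\in\mathcal{B}(M)$, and a non-loop singleton $\{e\}$ gives single type $(\rho(E)-1,0)$; both facts are visible from the $x_\lambda,y_\lambda$ with $\lambda\in\Lambda_1$. Extracting the coefficient supported on ``one non-loop-singleton slot, $b-1$ basis slots'' isolates the configurations $(\{e\},B_{i_2},\dots,B_{i_b})$ with the $B_{i_j}$ distinct. In such a configuration the cross factor of the pair $\{1,j\}$ has intersection $\{e\}\cap B_{i_j}$ of size $1$ or $0$, so the exponent of $x_{\cap\{1,j\}}$ (equivalently of $y_{\cup\{1,j\}}$) reads off whether $e\in B_{i_j}$, while the base--base cross exponents record the pairwise $|B_i\cap B_j|$ that rigidify the frame. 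Assembling, for each element, its incidence vector against the bases would recover the element--basis incidence, hence $\mathcal{B}(M)$ as a set system up to relabelling of $E$, which is exactly $M$ up to isomorphism.

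Two difficulties stand in the way, and the second is the real obstacle. The first is that genus $b$ offers only $b$ slots, one short of placing a probe alongside all $b$ bases, so each configuration exposes an element's incidence with only $b-1$ of the bases. I expect to close this gap by an edge-sum (M\"obius) inversion on the Boolean lattice: the partial patterns determine, for each $S\subseteq\{1,\dots,b\}$, the number of elements $e$ with $\{i:e\in B_i\}=S$, up to the one-dimensional alternating kernel of the hypercube, and that last degree of freedom is pinned down because the number of elements lying in every basis equals the number of coloops, already known from genus $1$. The second, genuine obstacle is that when the bases are summed over all placements the frame is identified only up to the automorphisms of its pairwise-intersection data, so \emph{Gram-twin} bases (equal intersection sizes with every other basis) cannot be separated by a single probe; overcoming this requires multi-probe configurations --- two or more singleton slots placed among the remaining bases --- whose joint cross exponents constrain several elements against the frame simultaneously, and the crux of the proof is to show that this fuller ensemble of genus-$b$ data breaks all such symmetries and pins down $M$ uniquely.

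For part (2) the plan is to exhibit, for each $g$, an explicit pair $M,N$ that is indistinguishable by every joint type of a $g$-tuple yet separated by a single joint type that only becomes available at genus $\lceil\sqrt{2}\,g\rceil$. I would begin from a small non-isomorphic, $T^{(1)}$-equivalent pair (such pairs are recalled in the Introduction) and amplify it by a controlled operation chosen so that the number of bases grows quadratically in the parameter while all $\le g$-wise intersection/union statistics remain matched. Proving $T^{(g)}(M)=T^{(g)}(N)$ then amounts to constructing a bijection between the $g$-tuples of subsets of $E_M$ and those of $E_N$ that preserves every single, intersection, and union type; producing a bijection that respects all $\binom{g}{2}$ pairwise constraints at once is the hardest step here. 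The separation at genus $\lceil\sqrt{2}\,g\rceil$ would follow by naming one configuration whose type-count differs, and the threshold itself is forced by the number of pairwise interactions the genus supplies: with $g$ slots one controls $\binom{g}{2}$ intersection/union terms, the two matroids are designed to agree as long as only this many terms are available, and the smallest distinguishing substructure needs about twice as many, so separation appears at the genus $g'$ with $\binom{g'}{2}\approx 2\binom{g}{2}$, i.e. $g'\approx\sqrt{2}\,g$, yielding $\lceil\sqrt{2}\,g\rceil$.
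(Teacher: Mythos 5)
Your proposal is an outline whose two cruxes are exactly the points you leave unresolved, so it does not yet constitute a proof. In part (1), your probe-and-invert scheme stalls at the obstacle you yourself name: the monomials of $R^{(b)}(M)$ come unordered, so a frame of bases is determined only up to symmetries of its pairwise intersection-size array, and you give no argument that multi-probe configurations break these symmetries. The paper takes a different route that never tries to recover element--base incidences directly: from a single monomial whose slots are filled by one circuit $C_1$ (with $|C_1\setminus B_1|=1$) together with all bases except one, it reads off the matroid base graph $\BG(M)$ --- adjacency of $A_i,A_j$ being visible as the exponent of $x_{A_i\cap A_j}$ equalling $1$ --- and then invokes the Holzmann--Norton--Tobey reconstruction theory (Lemmas \ref{pi}--\ref{lem:recover}): for a non-separable matroid, the base graph together with the two neighborhood partitions $\pi(B,M)$ and $\pi'(B,M)$ determines the labels of all bases. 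The circuit slot is what identifies $\pi'$ (via its class $q^{B_1}_{c_1}$), and a final, separate argument recovers the one base $S$ that had to be omitted from the frame. This graph-reconstruction input is the missing idea in your approach; both you and the paper must sacrifice one slot (your probe, their circuit), but it is the HNT machinery that makes the sacrifice recoverable, and your ``Gram-twin'' obstruction is precisely what it is designed to defeat.

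In part (2) you name no matroids and prove neither agreement nor separation; ``amplify a $T^{(1)}$-equivalent pair so that all $\le g$-wise statistics match'' is exactly what has to be constructed, and the bijection you defer is the whole difficulty. The paper's pair is explicit: $S_{4n}$ and $S'_{4n}$ are obtained from $U_{3,4n}$ by deleting the triangles of two circuital loose cycles, respectively two circuital loose paths, of length $n$. Agreement $T^{(g)}(S_{4n})=T^{(g)}(S'_{4n})$ is proved by an explicit ground-set bijection $\phi_{\mathcal{A}}$ built separately for each $g$-tuple $\mathcal{A}$, which works whenever the at most $g^2$ sets of $\mathcal{X}(\mathcal{A})$ cannot contain a loose cycle or path; a triangular-number count shows this holds for all $g\le\bigl\lceil\frac{5+\sqrt{8n-15}}{2}\bigr\rceil\approx\sqrt{2n}$. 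Separation is tied to a graph invariant introduced for this purpose, the $2$-fold intersecting cover number $\iota(C_n)$ of the cycle graph: $T^{(\iota(C_n))}$ distinguishes the pair (Theorem \ref{thm:iota_inf}), and an explicit cover yields $\iota(C_n)\le 2\lceil\sqrt{n}\rceil$ (Theorem \ref{thm:iota_sup}). The constant $\sqrt{2}$ is then the ratio of $2\sqrt{n}$ to $\sqrt{2n}$, made precise in Proposition \ref{prop:triangle_square}. Your counting heuristic for $\sqrt{2}$ gestures at the same numerics, but it supplies neither the cover construction nor the per-tuple bijection, which are the actual content of the proof.
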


This paper is organized as follows. 
In Section \ref{sec:2}, we prepare some preliminaries.
In Sections \ref{sec:3}, \ref{sec:4} we give a proof of Theorem \ref{thm:main} (1) and (2), respectively. 
In section \ref{sec:rem}, we give some remarks and open problems.

All computations presented in this paper were performed using 
{\sc Magma} \cite{Magma}
and 
{\sc Mathematica} \cite{Mathematica}.

\section{Preliminaries}\label{sec:2}

\subsection{Elementary Properties on Matroids}

\begin{df}
A matroid $M$ is $T^{(g)}$-unique if $M$ is a 
unique matroid, up to isomorphism, whose Tutte 
polynomial of genus $g$ is $T^{(g)}(M)$.
\end{df}

\begin{lem}\label{lem:circuit}
If a matroid $M$ has no circuit $C$ with 
$ 3\leq |C|$
then $M$ is 
a direct sum of matroids of rank at most one. 
In particular, $M$ is $T^{(1)}$-unique. 
\end{lem}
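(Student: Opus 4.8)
For the first assertion I would unwind the hypothesis: since $M$ has no circuit of size $\ge 3$, every circuit is a loop $\{e\}$ or a parallel pair $\{e,f\}$. I would then check that the parallel relation is an equivalence relation on the non-loops, the only nontrivial point being transitivity: if $\{e,f\}$ and $\{f,g\}$ are circuits with $e,f,g$ distinct, then the circuit elimination axiom (C3) applied at $f$ produces a circuit contained in $\{e,g\}$, and as neither $e$ nor $g$ is a loop this circuit must be $\{e,g\}$. Next I would invoke the standard fact that the ground sets of the components of a matroid are the classes of the relation ``$x=y$, or $x$ and $y$ lie in a common circuit''; under our hypothesis two distinct elements share a circuit exactly when they are parallel, while a loop shares a circuit with no other element by minimality (C2). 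Hence the components are precisely the singleton loops, each a copy of $U_{0,1}$, and the parallel classes $P$, each a copy of $U_{1,|P|}$. All of these have rank at most one, so $M$ is their direct sum.

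For the $T^{(1)}$-uniqueness, observe that $T^{(1)}(M)=T(M;x,y)$, equivalently the Whitney rank generating function $R(M;x,y)$. The plan is to read off, from $R(N;x,y)$ of an \emph{arbitrary} matroid $N$, three data — the rank $\rho(E)$, the number of loops, and the entire multiset of parallel-class sizes — and then to show that for the matroids produced in Part 1 these data force any $N$ with $T(N)=T(M)$ to be isomorphic to $M$.

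To extract the invariants I would group subsets $A$ in $R(N;x,y)=\sum_A x^{\rho(E)-\rho(A)}y^{|A|-\rho(A)}$ by their rank. The rank-$0$ sets are exactly the subsets of the set $L$ of loops, so the coefficient of $x^{\rho(E)}$ is $(1+y)^{|L|}$, which reveals the number of loops. The rank-$1$ sets are the unions of a nonempty subset of a single parallel class $P$ with an arbitrary subset of $L$, so the coefficient of $x^{\rho(E)-1}$ is
\[
(1+y)^{|L|}\sum_{P}\frac{(1+y)^{|P|}-1}{y},
\]
the sum running over the parallel classes of $N$. Dividing out the now-known factor $(1+y)^{|L|}$ and reading off coefficients recovers each $\sum_{P}\binom{|P|}{k}$, hence (by a routine inversion, e.g. via $\sum_{P}(1+y)^{|P|}$) the multiset $\{|P|\}$ itself; in particular the number $p$ of parallel classes.

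Finally, suppose $T(N)=T(M)$. Then $N$ and $M$ share the rank $r=\rho(E)$, the number of loops, and the multiset of parallel-class sizes, hence the number $p$ of parallel classes. For $M$ each parallel class is exactly one rank-$1$ component, so $r=p$; therefore $r=p$ for $N$ too. But then the simplification of $N$ is a simple matroid of rank $p$ on $p$ elements, which can only be the free matroid $U_{p,p}$, so $N$ is the direct sum of its loops and parallel classes with precisely the sizes occurring in $M$, giving $N\cong M$. I expect the genuine obstacle to be exactly this last step: it is tempting to use only the independence numbers $i_k$, which $T$ certainly determines, but these do \emph{not} suffice — the rank-$2$ matroid whose simplification is $U_{2,3}$ with parallel classes of sizes $4,1,1$ has independence polynomial $(1+3t)^2$, identical to that of $U_{1,3}\oplus U_{1,3}$. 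One is forced to pull the finer parallel-class-size multiset out of the $x^{\rho(E)-1}$ coefficient and pair it with the rank, which is what excludes a nonfree simplification.
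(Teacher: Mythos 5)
Your proof is correct, and it actually does more than the paper's. For the structural assertion you follow essentially the paper's route: every circuit is a loop or a parallel pair, parallelism is an equivalence relation, and the pieces are rank at most one. The differences are minor: you justify transitivity explicitly via circuit elimination (the paper just asserts that $\{e,x\},\{e,y\}$ circuits force $\{x,y\}$ to be a circuit), and you absorb coloops as singleton parallel classes through the standard ``common circuit'' characterization of components, whereas the paper first splits off loops and coloops and then treats the loopless, coloopless case. The substantive difference is in the second assertion: the paper's proof stops at the decomposition and offers no argument whatsoever for $T^{(1)}$-uniqueness, treating it as immediate, while you supply the missing argument. Extracting from $R(N;x,y)$ of an \emph{arbitrary} matroid $N$ the rank, the number of loops (the coefficient of $x^{\rho(E)}$ being $(1+y)^{|L|}$), and the multiset of parallel-class sizes (from the coefficient of $x^{\rho(E)-1}$ after dividing out $(1+y)^{|L|}$), and then using the equality of the rank with the number of parallel classes to force the simplification of $N$ to be free, is exactly what is needed to conclude that any $N$ with the same polynomial is again a direct sum of loops and parallel classes of the same sizes, hence isomorphic to $M$. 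Your closing counterexample --- the inflation of $U_{2,3}$ with parallel classes of sizes $4,1,1$ versus $U_{1,3}\oplus U_{1,3}$, both with independence polynomial $(1+3t)^2$ --- correctly identifies why the cruder invariant of independence numbers would not suffice and the parallel-class multiset must be used; this buys a genuinely complete proof of the uniqueness claim that the paper leaves implicit.
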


\begin{proof}
Suppose that a matroid $M$ has no circuit $C$ with $ 3\leq |C|$. 
If $e$ is a loop or a coloop of $M$, then $M$ can be decomposed into the direct sum of $M\setminus\{e\}$ and $\{e\}$.
Now we assume here that $M$ has neither a loop nor a coloop.
Then every circuit $C$ of $M$ satisfies $|C|=2$.
In this case, every element $e$ of $E_M$ is contained in some circuit of two elements,
and if $e$ is contained in two distinct circuits $C_1=\{e,x\}$ and $C_2=\{e,y\}$
then the set $\{x,y\}$ is also a circuit of $M$. 
Hence if we denote $p \sim q$ for two elements $p,q$ of $E_M$ when $\{p,q\}$ is a circuit of $M$,
then this relation $\sim$ is an equivalence relation on $E_M$.
Each equivalence class of the above relation $\sim$ on $E_M$ is corresponding to a uniform matroid of rank $1$
and $M$ can be decomposed into the direct sum of these uniform matroids. 
\end{proof}

\begin{lem}[{\cite[Proposition 1.1.6]{Oxley}}]\label{lem:base+one}
Let $M=(E,\mathcal{I})$ be a matroid, $I\subseteq \mathcal{I}$ be an independent set of $M$,
$e \in E\setminus I$ be an element outside of $I$. If $\rho(I\cup\{e\})=|I|$ holds,
then $I\cup\{e\}$ contains a unique circuit of $M$.
\end{lem}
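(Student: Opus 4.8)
The plan is to reduce the statement to two routine facts: that $I\cup\{e\}$ is dependent (which guarantees a circuit exists), and that any two circuits inside $I\cup\{e\}$ must collide at the element $e$, so that the circuit elimination axiom forces them to coincide. I would not expect to need anything beyond the axioms (I1)--(I3) and (C1)--(C3) recalled in the introduction.

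First I would record the existence of a circuit. Since $I\in\mathcal{I}$, we have $\rho(I)=|I|$, and because $\rho$ is monotone with $\rho(I)\le\rho(I\cup\{e\})=|I|$, the hypothesis gives $\rho(I\cup\{e\})=|I|<|I|+1=|I\cup\{e\}|$. Hence $I\cup\{e\}$ is not independent, and as every dependent set contains a minimal dependent subset, $I\cup\{e\}$ contains at least one circuit $C$ of $M$. This settles existence.

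For uniqueness, I would argue by contradiction. Suppose $C_1,C_2\in\mathcal{C}(M)$ are distinct circuits with $C_1,C_2\subseteq I\cup\{e\}$. By (I2) every subset of the independent set $I$ is itself independent and therefore contains no circuit; in particular no $C_i$ can be contained in $I$. Since $I\cup\{e\}$ exceeds $I$ by the single element $e$, each $C_i$ must contain $e$, so $e\in C_1\cap C_2$. The circuit elimination axiom (C3) then produces a circuit $C_3$ with $C_3\subseteq(C_1\cap C_2)\setminus\{e\}\subseteq I$, contradicting the independence of $I$ (and, in the degenerate case $C_1\cap C_2=\{e\}$, contradicting (C1) since $C_3\subseteq\emptyset$). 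Therefore $C_1=C_2$, and the circuit contained in $I\cup\{e\}$ is unique.

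The proof is essentially a direct application of the axioms, so I do not anticipate a genuine obstacle; the one step that must be handled carefully is the observation that forces every circuit inside $I\cup\{e\}$ to pass through $e$, since this is exactly the input that makes (C3) applicable and delivers a circuit lying entirely within the independent set $I$.
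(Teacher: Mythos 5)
Your proof is correct and is essentially the standard argument behind the result the paper cites (it gives no proof of its own, deferring to Oxley, Proposition 1.1.6): existence of a circuit from $\rho(I\cup\{e\})=|I|<|I\cup\{e\}|$, the observation that every circuit inside $I\cup\{e\}$ must contain $e$, and circuit elimination to kill a second circuit.

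One caveat: the axiom (C3) as printed in the paper contains a typo --- the true circuit elimination axiom produces a circuit $C_3\subseteq(C_1\cup C_2)\setminus\{e\}$, with a \emph{union}, not an intersection (as printed, (C3) is never satisfiable when two distinct circuits meet, since $(C_1\cap C_2)\setminus\{e\}$ is a proper subset of a circuit and hence independent). You copied the printed version into your uniqueness step. Fortunately your argument is insensitive to this: since $C_1,C_2\subseteq I\cup\{e\}$, you have $(C_1\cup C_2)\setminus\{e\}\subseteq I$ just as well, so the corrected axiom still hands you a circuit $C_3$ inside the independent set $I$, the desired contradiction. With $\cap$ replaced by $\cup$ your proof is exactly the textbook one.
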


\begin{lem}\label{lem:distance1}
Let $C$ be a circuit of $M$ with $|C|\geq 3$, and let $B$ be a base of $M$
such that $|C\setminus B|=1$ holds. Then the following two conditions hold.
\begin{enumerate}
  \item The family of bases contained in $B \cup C$ is 
        $\mathcal{F}:=\{(B \cup C) \setminus \{p\} | p \in C\}$.
  \item If a base $B^{\prime}$ of $M$ satisfies the condition that
        $\forall B \in \mathcal{F}, |B \setminus B^{\prime}| \leq 1$, 
        then $B^{\prime}$ is identical to a base in $\mathcal{F}$.
\end{enumerate}
\end{lem}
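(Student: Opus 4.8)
The plan is to handle the two parts separately: part~(1) will follow almost immediately from Lemma~\ref{lem:base+one}, and part~(2) will be a short combinatorial argument built on top of it. Throughout, write $r:=\rho(E)$ for the rank of $M$, and let $\{e\}:=C\setminus B$, so that the hypothesis $|C\setminus B|=1$ gives $C\setminus\{e\}\subseteq B$ and hence $B\cup C=B\cup\{e\}$, a set of cardinality $r+1$.

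For part~(1), I would first observe that since $B$ is a base we have $\rho(B\cup\{e\})=\rho(B)=r=|B|$, so Lemma~\ref{lem:base+one} applies and shows that $B\cup\{e\}$ contains a \emph{unique} circuit; as $C\subseteq B\cup\{e\}$ is itself a circuit, that unique circuit is $C$. Now any base contained in $B\cup C$ has cardinality $r$, hence is obtained from the $(r+1)$-set $B\cup C$ by deleting a single element $p$, and such a deletion is a base precisely when it is independent, i.e.\ precisely when it no longer contains the unique circuit $C$, i.e.\ precisely when $p\in C$. This identifies the family of bases in $B\cup C$ with $\mathcal{F}=\{(B\cup C)\setminus\{p\}\mid p\in C\}$, proving~(1).

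For part~(2), the useful decomposition is $B\cup C=C\,\dot{\cup}\,D$ with $D:=B\setminus C$, so that every member of $\mathcal{F}$ has the form $B_p:=(C\setminus\{p\})\,\dot{\cup}\,D$ and all of them share the common part $D$. Given a base $B'$ with $|B_p\setminus B'|\le 1$ for every $p\in C$, the first key step is to show $D\subseteq B'$: if some $d\in D$ lay outside $B'$, then $d\in B_p\setminus B'$ for every $p$, and $|B_p\setminus B'|\le 1$ would force $B_p\setminus\{d\}\subseteq B'$ for all $p$; taking the union over $p\in C$ (and using $|C|\ge 2$) yields $C\subseteq B'$, contradicting independence of $B'$. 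Once $D\subseteq B'$ is known, $|B_p\setminus B'|=|(C\setminus\{p\})\setminus B'|$, so writing $T:=C\setminus B'$ the hypothesis becomes $|T\setminus\{p\}|\le 1$ for all $p\in C$. Since $C$ is a circuit and $B'$ is independent we have $T\ne\emptyset$; testing $p\in T$ gives $|T|\le 2$, while testing any $p\in C\setminus T$ (which exists because $|C|\ge 3$) gives $|T|\le 1$, forcing $|T|=1$, say $T=\{q\}$. Then $B'\supseteq (C\setminus\{q\})\,\dot{\cup}\,D=B_q$, and since both sets have size $r$ I conclude $B'=B_q\in\mathcal{F}$.

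I expect the main obstacle to be part~(2), and within it the step isolating $D\subseteq B'$: the condition $|B_p\setminus B'|\le 1$ is a family of ``almost-containment'' constraints, and the cleanest way to exploit them is to argue that a single element of $D$ missing from $B'$ would have to account, simultaneously, for the entire one-element slack in every $B_p$, which over-constrains $B'$ into containing the circuit $C$. After that, the counting on $T$ is routine, the only subtlety being the use of $|C|\ge 3$ to guarantee an index $p\in C\setminus T$.
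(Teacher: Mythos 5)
Your proof is correct and takes essentially the same approach as the paper: part (1) is read off from the unique circuit guaranteed by Lemma \ref{lem:base+one}, and part (2) is the same elementary counting of which elements of each member of $\mathcal{F}$ can be missing from $B'$. If anything, yours is more complete: the paper only argues $|C\setminus B'|=1$ (your analysis of $T$) and then asserts the conclusion, while you also verify the containment $B\setminus C\subseteq B'$ that this final step silently requires.
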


\begin{proof}
The first condition follows immediately from Lemma \ref{lem:base+one}. 
For the second condition, it is clear that $|C \setminus B^{\prime}|=1$, for otherwise
there exist two distinct elements $u,v \in C \setminus B$ such that
$\exists p\in C\setminus\{u,v\}, |B^{\prime} \setminus ((B\cup C)\setminus\{p\})| \geq 2$, a contradiction.
Then the second condition follows from the first condition. 
\end{proof}

\begin{lem}\label{lem:every_base_has_a_3-circuit}
If a matroid $M$ has a circuit of at least three elements, 
then, for every base $B$ of $M$, there exists a circuit $C$ with $|C| \geq 3$
such that $|C\setminus B|=1$. 
\end{lem}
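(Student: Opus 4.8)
The plan is to recast the statement in the language of fundamental circuits. Fix a base $B$. For each $e\in E\setminus B$ we have $\rho(B\cup\{e\})=\rho(B)$, so Lemma~\ref{lem:base+one} produces a unique circuit $C(e,B)\subseteq B\cup\{e\}$; since $B$ is independent this circuit contains $e$ and satisfies $|C(e,B)\setminus B|=1$ automatically. Thus every fundamental circuit already meets the displacement condition $|C\setminus B|=1$, and the whole lemma reduces to a single claim: some fundamental circuit $C(e,B)$ has at least three elements. I would prove this by contradiction, assuming that every $C(e,B)$ has at most two elements.

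Under that assumption each $e\in E\setminus B$ is either a loop (when $C(e,B)=\{e\}$) or is parallel to a unique element $b_e\in B$ (when $C(e,B)=\{e,b_e\}$). I then take any circuit $C$ with $|C|\ge 3$, which exists by hypothesis; it contains no loop, for otherwise (C2) would force $C$ to be that loop. Define $\phi\colon C\to B$ by $\phi(x)=x$ for $x\in C\cap B$ and $\phi(x)=b_x$ for $x\in C\setminus B$. Because replacing each element by its parallel partner preserves the rank (both lie in a common rank-one flat, so $\rho(C)=\rho(\phi(C))$), and $\phi(C)\subseteq B$ is independent, we get $|\phi(C)|=\rho(\phi(C))=\rho(C)=|C|-1<|C|$. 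Hence $\phi$ identifies two distinct elements $x,y\in C$, and both are parallel to the common value $b:=\phi(x)=\phi(y)$, so $\{x,y\}$ is a two-element circuit contained in $C$. By (C2) this forces $C=\{x,y\}$, contradicting $|C|\ge 3$. The contradiction shows that some fundamental circuit has size at least three, and that circuit is the desired $C$.

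The step I expect to be the main obstacle is resisting the tempting but flawed local argument: one is inclined to pick a size-$\ge 3$ circuit minimising $|C\setminus B|$ and then, for $e\in C\setminus B$, apply the circuit elimination axiom (C3) to $C$ and $C(e,B)$ to push the circuit closer to $B$. The difficulty is that elimination only guarantees a circuit inside $(C\cup C(e,B))\setminus\{e\}$, which may collapse to size $2$, so an induction on $|C\setminus B|$ does not stay within circuits of size at least three. The global counting argument above sidesteps this entirely by comparing $\rho(C)$ with $|\phi(C)|$ rather than manipulating individual circuits; the one point requiring care is the justification that replacing each element by its parallel partner in $B$ preserves the rank, which follows because each $e\in C\setminus B$ and its partner $b_e$ generate the same rank-one flat, giving $\rho(C)=\rho(\phi(C))$.
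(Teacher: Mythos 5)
Your proof is correct, and it shares the paper's overall skeleton --- assume for contradiction that every fundamental circuit of $B$ has at most two elements, take a circuit $C$ with $|C|\ge 3$, and map each element of $C$ to itself or to its parallel partner in $B$ --- but it reaches the contradiction by a genuinely different finishing move. The paper asserts that the image set $\{e'_1,\dots,e'_m\}$ is again a circuit of at least three elements contained in $B$, contradicting independence; this rests on the fact that replacing the elements of a circuit by parallel elements yields another circuit, and it tacitly requires the partners to be pairwise distinct (if they collide, the image is a smaller set and the assertion as stated fails; the paper's one-line appeal to parallelism being an equivalence relation does not by itself cover either point). Your rank computation $|\phi(C)|=\rho(\phi(C))=\rho(C)=|C|-1<|C|$ runs in the opposite direction: it shows the partner map can \emph{never} be injective, and then transitivity of parallelism together with (C2) turns any collision into a two-element circuit inside $C$, which is absurd. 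What this buys is self-containedness: you only use the uniqueness of fundamental circuits (Lemma \ref{lem:base+one}), the fact that parallel elements span the same rank-one flat, and (C2), rather than the unproved circuit-replacement fact, and your argument automatically disposes of the distinctness issue that the paper leaves implicit; the cost is that you invoke the rank/closure machinery where the paper stays purely inside the circuit axioms. One small wording quibble: when $\phi(x)=\phi(y)=b$ with $x\in C\cap B$, the element $x$ \emph{equals} $b$ rather than being parallel to it, but then $y$ is parallel to $b=x$, so $\{x,y\}$ is still a $2$-circuit and nothing breaks.
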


\begin{proof}
Suppose not. Then, there exists a base $B$ such that, for any element $e\in E_M\setminus B$,
$\{e\}\cup B$ has a unique $2$-circuit $\{e,e'\}$. Let $C=\{e_1,e_2,\ldots,e_m\}$ be a circuit
of $M$of at least  $3$ elements. Let $e'_i$ denote an element of $B$ such that either
$\{e_i,e'_i\}$ forms a $2$-circuit of $M$ or $e'_i=e_i$. Since the relation that two
elements of $E_M$ form a $2$-circuit (are parallel) is an equivalence relation on $E_M$,
the set $\{e'_1,e'_2,\ldots,e'_m\}$ is also a circuit of $M$of at least  $3$ elements.
Nevertheless, the above circuit is a subset of $B$, a contradiction. 
\end{proof}



\begin{lem}\label{lem:|B-C|>0}
Let $B$ be a base of a non-separable matroid $M$ and let $C$ be a circuit of $M$
such that $|C\setminus B|=1$. If $|B\setminus C|>0$ then $M$ has another base
$B'$ such that $|C \setminus B'|=1$ and $|B \setminus B'|\geq 2$ hold.
\end{lem}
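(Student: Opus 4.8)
The plan is to produce the required base $B'$ by perturbing $B$ in exactly two elements, via two successive basis exchanges. Write $C\setminus B=\{e\}$, so that $e\notin B$ and $C_0:=C\cap B$ satisfies $C=C_0\cup\{e\}$; since a non-separable matroid on at least two elements has no loops, $|C|\ge 2$ and $C_0\neq\emptyset$. Fix any $c\in C_0$, and using the hypothesis $|B\setminus C|>0$ fix an element $b^{*}\in B\setminus C$. Note $|E_M|\ge|C_0|+2\ge 3$.

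First step: by Lemma \ref{lem:base+one}, $C$ is the unique circuit contained in $B\cup\{e\}$, i.e.\ $C$ is the fundamental circuit of $e$ relative to $B$. Hence $B_1:=(B\setminus\{c\})\cup\{e\}$ is again a base, and it already satisfies $|C\setminus B_1|=1$ (only $c$ is now missing from $C$) while $|B\setminus B_1|=1$. What remains is to push a second element of $B$ out without disturbing $C$ again.

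Second step: I would exchange $b^{*}$ out of $B_1$. Because $M$ is non-separable on at least two elements, $b^{*}$ is not a coloop (a coloop would be a rank-one component $U_{1,1}$, contradicting non-separability), so $E_M\setminus\{b^{*}\}$ is spanning and contains a base $B_2$ with $b^{*}\notin B_2$. Applying the basis-exchange property to $b^{*}\in B_1\setminus B_2$ produces an element $f\in B_2\setminus B_1\subseteq E_M\setminus B_1$ for which $B':=(B_1\setminus\{b^{*}\})\cup\{f\}$ is a base.

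The main obstacle — and the only place the two required conditions are at risk — is controlling where $f$ lands: I must rule out $f\in C$. Since $C\cap(E_M\setminus B_1)=\{c\}$, the only way to have $f\in C$ is $f=c$. But $f=c$ would make $(B_1\setminus\{b^{*}\})\cup\{c\}=(B\setminus\{b^{*}\})\cup\{e\}$ a base, which forces $b^{*}$ to lie in the fundamental circuit $C$ of $e$, contradicting $b^{*}\in B\setminus C$. Hence $f\notin C$, and in particular $f\neq c$, $f\neq e$. Finally I would verify that $B'=(B\setminus\{c,b^{*}\})\cup\{e,f\}$ meets both requirements: as $e,f\notin B$ we get $B\setminus B'=\{c,b^{*}\}$, so $|B\setminus B'|=2$; and since $f\notin C$, $C\setminus\{c\}\subseteq B'$, and $c\notin B'$, we obtain $C\setminus B'=\{c\}$, i.e.\ $|C\setminus B'|=1$.
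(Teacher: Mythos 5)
Your argument is correct, and every step checks out: the fundamental-circuit exchange makes $B_1=(B\setminus\{c\})\cup\{e\}$ a base with $C\setminus B_1=\{c\}$; the non-coloop property of $b^{*}$ (valid because a coloop would make $M$ separable) yields a base $B_2$ avoiding $b^{*}$; the exchange axiom gives $f\in B_2\setminus B_1$ with $B'=(B_1\setminus\{b^{*}\})\cup\{f\}$ a base; and your exclusion $f\neq c$ (otherwise $B'$ would contain the circuit $C$) is exactly the point that needs to be ruled out. The paper's proof follows the same high-level plan---perturb $B$ in precisely two places, one swap removing an element of $B\setminus C$ and one swap rotating which element of $C$ is missing---but executes it dually and in the opposite order: for $e\in B\setminus C$ it first uses the fundamental cocircuit of $e$ with respect to $B$, which has at least two elements since $M$ has no coloops, to produce a base $B''=(B\setminus\{e\})\cup\{e'\}$ still satisfying $|C\setminus B''|=1$, and then swaps an element $e''\in C\cap B''$ with the unique element of $C\setminus B''$ (justified by Lemma \ref{lem:base+one}) to reach $B'$ with $B\setminus B'=\{e,e''\}$. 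The two routes rest on the same underlying fact---your $f$ is nothing but an element of the fundamental cocircuit of $b^{*}$ with respect to $B_1$---but yours stays entirely primal (circuits, independence, the exchange axiom), avoiding any mention of cocircuits at the cost of introducing the auxiliary base $B_2$ and the explicit exclusion step, whereas the paper's dual formulation is more compact because the ``incoming'' element is constrained from the start.
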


\begin{proof}
If $|B\setminus C|>0$ then, for every element $e \in B\setminus C$,
$(E_M\setminus B)\cup\{e\}$ contains a unique cocircuit of at least $2$ elements, 
and hence, there exists an element $e' \in E_M\setminus B$ such that 
$B'':=(B\setminus\{e\})\cup\{e'\}$ is another base of $M$ and $|C \setminus B''|=1$
also holds. Then, for every element $e''\in C \cap B'' \neq \emptyset$,
$B':=(B''\setminus\{e''\})\cup(C\setminus B'')$ is a base of $M$ 
such that $|C \setminus B'|=1$ and $B \setminus B'=\{e,e''\}$ hold.
\end{proof}





\subsection{Matroid Base Graph}

First, let us define the matroid base graph of a matroid.
\begin{df}[\cite{{HNT},{Maurer1},{Maurer2}}]
Let $M$ be a matroid, and let $\mathcal{B}(M)$ be the bases of $M$.
Then the {\it matroid base graph} $\BG(M)=(V(\BG(M)),E(\BG(M)))$ of $M$
is defined by $V(\BG(M)):=\mathcal{B}(M)$ and
$E(\BG(M)):=\{\{B,B^{\prime}\}:B,B^{\prime} \in \mathcal{B}(M), |B \setminus {B^{\prime}}|=1\}$.
\end{df}

For a vertex $v$ of a graph $G$, its {\it open neighborhood} $N_G(v)$ is defined by the set $\{u\in V(G): \{u,v\} \in E(G)\}$, 
and its {\it closed neighborhood} $N_G[v]$ is defined by the set $N_G(v) \cup \{v\}$. 

\begin{lem}[{\cite[Lemma 4.1]{HNT}}]\label{pi}
Let $M$ be a non-empty matroid and $B$ a base of $M$. 
Then there are two partitions $\pi$ and $\pi'$ of $N_{\BG(M)}(B)$
into non-void subsets so that 
\begin{enumerate}
\item 
two vertices $B_1,B_2$ are adjacent in $N_{\BG(M)}(B)$ if and only if 
$B_1$ and $B_2$ are in the same equivalence class of $\pi$ or $\pi'$, and 
\item 
if $p\in \pi$ and $q\in \pi'$, then $|p\cap q|\leq 1$. 
\end{enumerate}
Further, if $M$ is non-separable, the pair of partitions 
$\pi$ and $\pi'$ are unique, up to order. 
\end{lem}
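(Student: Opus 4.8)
The plan is to exploit the natural ``exchange coordinates'' of the neighbours of $B$ and to recognise $N_{\BG(M)}(B)$ as the line graph of a bipartite graph. First I would set up the coordinatization. Since every neighbour $B'$ of $B$ is a base with $|B\setminus B'|=1$ and $|B'|=|B|$, it also satisfies $|B'\setminus B|=1$; hence there are unique elements $e(B')\in B$ and $f(B')\in E_M\setminus B$ with $B'=(B\setminus\{e(B')\})\cup\{f(B')\}$. I would then \emph{define} $\pi$ to be the partition of $N_{\BG(M)}(B)$ grouping the neighbours that share a common value of $e(\cdot)$, and $\pi'$ the partition grouping those that share a common value of $f(\cdot)$; each occurring value of $e(\cdot)$ (resp.\ $f(\cdot)$) yields exactly one class, so both are partitions into non-void subsets.

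The key step is a direct adjacency computation. Given two distinct neighbours $B_1,B_2$ with coordinates $(e_1,f_1)$ and $(e_2,f_2)$, I would evaluate $B_1\setminus B_2$ by cases: if $e_1=e_2$ then $B_1\setminus B_2=\{f_1\}$; if $f_1=f_2$ then $B_1\setminus B_2=\{e_2\}$; and if $e_1\neq e_2$ and $f_1\neq f_2$ then $B_1\setminus B_2=\{e_2,f_1\}$. Thus $|B_1\setminus B_2|=1$ exactly when $e_1=e_2$ or $f_1=f_2$, which is precisely the assertion that $B_1$ and $B_2$ are adjacent iff they lie in a common class of $\pi$ or of $\pi'$; this gives $(1)$. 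Condition $(2)$ is then immediate, since a class $p\in\pi$ (a fixed value of $e$) and a class $q\in\pi'$ (a fixed value of $f$) can share at most the single neighbour carrying both coordinates, so $|p\cap q|\leq 1$.

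For the uniqueness clause I would reinterpret the structure. Let $H$ be the bipartite graph on parts $B$ and $E_M\setminus B$ in which $e\in B$ is joined to $f\in E_M\setminus B$ exactly when $(B\setminus\{e\})\cup\{f\}$ is a base; then the vertices of $N_{\BG(M)}(B)$ are the edges of $H$, and the computation above says that $N_{\BG(M)}(B)$ is exactly the line graph $L(H)$, with $\pi$ and $\pi'$ the two families of stars about the vertices of the two sides of $H$. Recovering the unordered pair $\{\pi,\pi'\}$ from $L(H)$ alone is then a reconstruction problem: any pair of partitions satisfying $(1)$--$(2)$ is a Krausz clique partition of $L(H)$ (its classes are cliques covering every edge, each vertex lying in one class of each partition), and since $H$ is bipartite it has no triangle, so every triangle of $L(H)$ is a star; hence the exceptional ambiguity $K_3=L(K_3)=L(K_{1,3})$ of Whitney's reconstruction theorem does not arise.

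The remaining point—the part I expect to be the main obstacle—is that a disconnected $N_{\BG(M)}(B)$ admits independent swaps of $\pi$ and $\pi'$ on each component, so uniqueness \emph{up to order} can genuinely fail when $M$ is separable. Here I would use non-separability of $M$ to prove that $H$, hence $N_{\BG(M)}(B)=L(H)$, is connected: a separation of $M$ splitting $B$ as $B_1\sqcup B_2$ forces every exchange to stay within one side and disconnects $H$, while conversely a disconnection of $H$ should be shown to induce a separator of $M$ (no circuit crossing the two parts), so that connectivity of $H$ is equivalent to non-separability. Given connectedness, a propagation argument along $H$ fixes—up to a single global swap—which side each star belongs to, yielding the uniqueness of $\{\pi,\pi'\}$ up to order.
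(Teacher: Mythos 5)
Your proposal is correct, but note that the paper contains no proof of this lemma to compare against: it is imported verbatim from \cite[Lemma 4.1]{HNT}, and all the paper adds is the explicit description of the two partitions, $\pi(B,M)=\{p^B_x\}$ and $\pi'(B,M)=\{q^B_y\}$, stated without proof right after the lemma. Those are exactly your partitions by common $e(\cdot)$ and common $f(\cdot)$, so your existence half (the three-case computation of $B_1\setminus B_2$, which is correct) fills in what the paper leaves implicit. The uniqueness half is your own route, and it is sound: since $H$ is bipartite, hence triangle-free, for a vertex $v$ corresponding to the edge $ef$ of $H$ there are no edges of $L(H)$ between $S_e\setminus\{v\}$ and $S_f\setminus\{v\}$ (where $S_e,S_f$ are the star cliques), and one checks that any pair of partitions satisfying (1)--(2) must assign to $v$ precisely the unordered pair of classes $\{S_e,S_f\}$; connectivity then forces a globally consistent $\pi$/$\pi'$ labelling up to one swap, which is the propagation you describe. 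The one step you leave as ``should be shown'' --- non-separability of $M$ implies $H$ connected --- is genuinely needed and is best done by rank additivity rather than chasing circuits: if $E=E_1\dot\cup E_2$ with no $H$-edge across, put $B_i:=B\cap E_i$; every fundamental circuit of $y\in E_i\setminus B$ lies in $B_i\cup\{y\}$, so $\rho(E_i)=|B_i|$, whence $\rho(E_1)+\rho(E_2)=\rho(E)$ and $M$ is separable. With that inserted, your argument is a complete, self-contained proof of a statement the paper only cites.
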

For any element $x\in B$, let $p^{B}_x:=\{B'\in N_{\BG(M)}(B)\mid x\not\in B'\}$ and 
for any element $y\in E\setminus B$, 
let $q^{B}_y:=\{B'\in N_{\BG(M)}(B)\mid y\in B'\}$. 
Then both 
$\pi =\{p^{B}_x\mid x\in B\mbox{ and }p^{B}_x\neq \emptyset\}$ and 
$\pi' =\{q^{B}_y\mid y\in E\setminus B\mbox{ and }q^{B}_y\neq \emptyset\}$ are 
the partitions with the desired properties mentioned in Lemma \ref{pi}. 
We will denote the former partition by $\pi(B,M)$ and
the latter partition by $\pi'(B,M)$.

Combining the above fact with Lemma \ref{pi}, we have the following:

\begin{lem}[\cite{HNT}]\label{lem:detect_N(B)}
Let $M$ be a matroid, $B$ an arbitrary vertex of $\BG(M)$. 
Suppose that we do not know the whole graph $\BG(M)$, while we know only 
its subgraph induced by $N_{\BG(M)}[B]$. Furthermore, suppose that we also know
each of the two partitions of $N_{\BG(M)}(B)$ corresponding to $\pi(B,M)$ and
$\pi'(B,M)$. Then we can construct a labeling on the bases corresponding to
the vertices $N_{\BG(M)}[B]$, that is, we can recover the bases of $M$
corresponding to $N_{\BG(M)}[B]$.  
\end{lem}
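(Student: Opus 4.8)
The plan is to read off, from the two given partitions, exactly which element of $B$ is deleted and which element of $E\setminus B$ is inserted in passing from $B$ to each of its neighbours, and then to reassemble every neighbouring base as a one-element modification of $B$. Concretely, I would first fix an arbitrary labelling $B=\{b_1,\dots,b_k\}$ of the base $B$ itself; this is the only freedom in the construction, and it is precisely what accounts for the unavoidable ``up to isomorphism'' ambiguity. Every neighbour $B'$ of $B$ in $\BG(M)$ satisfies $|B\setminus B'|=|B'\setminus B|=1$, so there is a unique pair $(x,y)$ with $x\in B$, $y\in E\setminus B$ and $B'=(B\setminus\{x\})\cup\{y\}$; the whole point is that the two partitions encode this pair.

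Next I would exploit the explicit descriptions $p^{B}_{x}=\{B'\in N_{\BG(M)}(B)\mid x\notin B'\}$ and $q^{B}_{y}=\{B'\in N_{\BG(M)}(B)\mid y\in B'\}$ recorded just before Lemma \ref{pi}. The block of $\pi(B,M)$ containing $B'$ is exactly some $p^{B}_{x}$, which pins down the deleted element $x$, while the block of $\pi'(B,M)$ containing $B'$ is exactly some $q^{B}_{y}$, which pins down the inserted element $y$. I would therefore assign to each block of $\pi(B,M)$ a distinct element of $B$ (there are at most $|B|$ such blocks, so this can be done injectively), interpreting it as ``the element removed by every neighbour in this block,'' and assign to each block of $\pi'(B,M)$ a fresh symbol outside $B$, interpreting it as ``the element added by every neighbour in this block''; the elements of $B$ represented by no block of $\pi(B,M)$ simply lie in every base of $N_{\BG(M)}[B]$ and need no adjustment. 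Since each neighbour lies in exactly one block $p$ of $\pi(B,M)$ and one block $q$ of $\pi'(B,M)$, I would then declare its reconstructed label to be $(B\setminus\{x_p\})\cup\{y_q\}$.

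It remains to verify that this assignment is well defined and faithful, and this is where property (2) of Lemma \ref{pi} does the work. Because $|p\cap q|\le 1$ for every $p\in\pi(B,M)$ and $q\in\pi'(B,M)$, no two distinct neighbours share both their $\pi$-block and their $\pi'$-block, so distinct vertices receive distinct reconstructed labels; and since the adjacencies inside $N_{\BG(M)}(B)$ are themselves governed by the two partitions (two neighbours being adjacent precisely when they share a block of $\pi(B,M)$ or of $\pi'(B,M)$, by Lemma \ref{pi}), the reconstructed sets reproduce the induced subgraph on $N_{\BG(M)}[B]$ as well. Comparing with the genuine matroid, if one were to label the $\pi$-blocks by the truly deleted elements and the $\pi'$-blocks by the truly inserted elements, the construction would return the actual bases verbatim; using arbitrary labels instead produces an isomorphic copy, which is all that is required.

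I expect the only real obstacle to be bookkeeping rather than mathematics: one must check that the element ``removed'' by a $\pi$-block is the same across all neighbours in that block (immediate from the definition of $p^{B}_{x}$) and that the removed labels are distributed injectively among the blocks, while the substantive content—that $\pi(B,M)$ separates deletions and $\pi'(B,M)$ separates insertions, and that these two refinements interact only through single coincidences—is supplied entirely by Lemma \ref{pi}.
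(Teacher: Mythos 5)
Your proof is correct and takes essentially the same approach as the paper's: both fix an arbitrary labelling of $B$ and of the elements outside $B$, identify each neighbour's block of $\pi(B,M)$ with its deleted element and its block of $\pi'(B,M)$ with its inserted element, and reconstruct each neighbour as $(B\setminus\{x\})\cup\{y\}$, with the property $|p\cap q|\le 1$ ensuring this pair of blocks determines the vertex uniquely. The paper's proof additionally records the unique circuit $C_y$ contained in $B\cup\{y\}$ and the unique cocircuit $D_x$ contained in $(E_M\setminus B)\cup\{x\}$, which is extra bookkeeping used later in the proof of the main theorem rather than something needed for the lemma itself.
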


The above lemma is essentially proven in the paper \cite{HNT}. Here
we give its proof, only for the convenience of readers. 
\begin{proof}
Let us treat each vertex in $N_{\BG(M)}[B]$ as a base of $M$. 
First let us set $B:=\{x_1,\ldots,x_{\rho(M)}\}$ and $E_M\setminus B:=\{y_{1}, \ldots,y_{\rho(M^{*})}\}$. 
From Lemma \ref{lem:distance1}, we have that, for each element $y \in E_M\setminus B$,
the subset $B\cup(\cup_{B'\in q^{B}_y} B')$ of $E_M$ turns out to be $B\cup\{y\}$. Furthermore,
from Lemma \ref{lem:circuit}, this subset $B\cup\{y\}=B\cup(\cup_{B'\in q^{B}_y} B')$ of $E_M$
contains a unique circuit of $M$. Let us denote this circuit by $C_y$. In the same way,
for every element $x \in B$, let us denote the unique cocircuit in $(E_M\setminus B) \cup \{x\}$
by $D_x$. For every vertex $B''$ in $N_{\BG(M)}(B)$, we can find a unique pair of elements
$(x(B''),y(B'')) \in \{(x,y) | x \in B, y \in E_M \setminus B\}$ such that
$\{B''\}=p^{B}_{x(B'')}\cap q^{B}_{y(B'')}$. This fact reveals that $B''=(B\setminus\{x(B'')\})\cup\{y(B'')\}$. 
We also know that $\{x(B''),y(B'')\}=D_{x(B'')} \cap C_{y(B'')}$.
Hence we can fix the labels on all the bases $N_{\BG(M)}[B]$.  
In addition, by using the labels on the bases $N_{\BG(M)}[B]$, 
we can also fix all the labels on the circuits $C_y (\forall y \in E_M\setminus B)$ and
the cocircuits $D_x (\forall x \in B)$ of $M$. 
\end{proof}


In general, matroids are not necessarily determined from their 
matroid base graphs. For example, every matroid $M$ and its dual $M^*$ have
their common matroid base graph. More precisely, it is known that the matroid
base graphs $\BG(M_1)$ and $\BG(M_2)$ are isomorphic as a graph if and only if
$M_1$ and $M_2$ are equivalent (\cite[Theorem 5.3]{HNT}). Nevertheless, the following
facts are known.

Let $p,q$ are two vertices of a graph $G$, then $\dist(p,q)$ denotes the distance
of $p$ and $q$ in $G$. 

\begin{lem}[{\cite[Lemma 3.2]{HNT}}]\label{lem:common_neighbor}
Let $M$ be a matroid. 
If $B$ and $B^{\prime}$ are two vertices of $\BG(M)$ such that $\dist(B,B^{\prime})=2$,
then $N_{\BG(M)}(B) \cap N_{\BG(M)}(B^{\prime})$ is an induced subgraph of the cycle of 
four vertices and contains two nonadjacent vertices. That is, the intersection of the
open neighborhoods is either two isolated vertices, a path of three vertices, or
a cycle of four vertices.
\end{lem}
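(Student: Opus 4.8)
The plan is to first convert the graph-theoretic hypothesis $\dist(B,B')=2$ into the combinatorial statement $|B\setminus B'|=2$. Since every basis has cardinality $\rho(M)$, the symmetric difference gives a metric with $|X\triangle Y|=2|X\setminus Y|$ for bases $X,Y$. If $N$ is any common neighbour of $B$ and $B'$, then $|B\triangle N|=|N\triangle B'|=2$, so the triangle inequality forces $|B\triangle B'|\le 4$, i.e. $|B\setminus B'|\le 2$; and $|B\setminus B'|\ge 2$ because $B\ne B'$ and $B,B'$ are non-adjacent. Writing $S:=B\cap B'$, $B\setminus B'=\{a_1,a_2\}$ and $B'\setminus B=\{b_1,b_2\}$, I introduce the four candidate sets $N_{ij}:=S\cup\{a_i,b_j\}$ for $i,j\in\{1,2\}$.

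Next I would pin down the common neighbourhood exactly. A short element-chase shows that every common neighbour of $B$ and $B'$ is one of the $N_{ij}$: if $N=(B\setminus\{x\})\cup\{y\}$ is adjacent to $B$, then $x\in S$ would leave both $a_1,a_2$ in $N\setminus B'$ (contradicting adjacency to $B'$), so $x=a_i$; and then $a_{3-i}\in N\setminus B'$ forces $y\in B'\setminus B$, i.e. $y=b_j$. Conversely each $N_{ij}$ that happens to be a basis is adjacent to both $B$ and $B'$, since $N_{ij}\setminus B=\{b_j\}$ and $N_{ij}\setminus B'=\{a_i\}$. Thus $N_{\BG(M)}(B)\cap N_{\BG(M)}(B')$ equals the set of those $N_{ij}$ that are bases. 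A direct comparison of the index pairs shows $N_{ij}$ and $N_{i'j'}$ are adjacent exactly when the pairs differ in one coordinate, while the two diagonal pairs $\{N_{11},N_{22}\}$ and $\{N_{12},N_{21}\}$ are non-adjacent; hence the induced subgraph on the (at most four) common neighbours is always an induced subgraph of the $4$-cycle $N_{11}-N_{12}-N_{22}-N_{21}-N_{11}$.

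The crux is to guarantee that two non-adjacent common neighbours actually exist, which will also rule out the empty, single-vertex, and single-edge cases. Here I would invoke the symmetric exchange property (see \cite{Oxley}): applied to the bases $B,B'$ and the element $a_1\in B\setminus B'$, it yields $b_j\in B'\setminus B$ such that both $(B\setminus\{a_1\})\cup\{b_j\}=N_{2j}$ and $(B'\setminus\{b_j\})\cup\{a_1\}=N_{1,3-j}$ are bases. Since their index pairs $(2,j)$ and $(1,3-j)$ differ in both coordinates, $N_{2j}$ and $N_{1,3-j}$ form a non-adjacent diagonal pair of common neighbours.

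Finally I would assemble these facts. The common neighbourhood is the set of $N_{ij}$ that are bases, it is an induced subgraph of $C_4$, and it contains a diagonal (hence non-adjacent) pair. An induced subgraph of $C_4$ containing two non-adjacent vertices has either two, three, or four vertices, giving respectively two isolated vertices, a path on three vertices, or the whole $4$-cycle; this is exactly the claimed trichotomy. I expect the main obstacle to be the existence of the non-adjacent pair: the purely local circuit arguments behind Lemma \ref{lem:base+one} readily produce a single exchanged basis but do not by themselves force the second exchange to land on the opposite diagonal, which is precisely what the symmetric exchange property supplies.
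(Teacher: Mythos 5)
Your proof is correct, but note that the paper itself never proves this lemma: it is imported verbatim from \cite[Lemma 3.2]{HNT}, so there is no in-paper argument to compare against, and your blind attempt must (and does) stand on its own. The reduction of $\dist(B,B')=2$ to $|B\setminus B'|=2$, the element chase showing that every common neighbour is one of the four sets $N_{ij}=(B\cap B')\cup\{a_i,b_j\}$, and the observation that two such sets are adjacent exactly when their index pairs differ in one coordinate are all sound, and together they show the common neighbourhood is an induced subgraph of $C_4$. Your use of the symmetric (Brualdi) exchange property to produce a diagonal pair is also applied correctly: the two bases it yields have index pairs differing in both coordinates, hence are nonadjacent common neighbours, and the final trichotomy (an induced subgraph of $C_4$ containing two nonadjacent vertices has $2$, $3$, or $4$ vertices) is exactly right. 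One remark: the full strength of symmetric exchange is not actually needed. Applying the ordinary one-sided basis-exchange axiom four times---deleting $a_1$ and then $a_2$ from $B$, and $b_1$ and then $b_2$ from $B'$---shows that some $N_{2j}$, some $N_{1j}$, some $N_{i2}$, and some $N_{i1}$ must be bases, and a short case analysis shows these four constraints cannot all hold for a collection of $N_{ij}$'s containing no diagonal pair; this variant keeps the proof entirely within the axioms already quoted in the paper. Either route is valid; yours trades that case analysis for a citation to a stronger classical theorem.
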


\begin{lem}[{\cite[Corollary 3.2.1]{HNT}}]\label{lem:label_fixed}
Let $M$ be a matroid and $B, B^{\prime}$ be two vertices of $\BG(M)$ such that
$\dist(B,B^{\prime})=2$. Let $B_1$ and $B_2$ be nonadjacent vertices of
$N_{\BG(M)}(B) \cap N_{\BG(M)}(B^{\prime})$.
Then if the labels on $B,B_1$ and $B_2$ are known, the labels on $B^{\prime}$
can be determined by $B^{\prime}=(B_1 \cap B_2) \cup (B_1\setminus B) \cup (B_2 \setminus B)$.
\end{lem}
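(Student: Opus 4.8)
The plan is to translate this purely combinatorial statement into an explicit exchange description of the three known bases $B$, $B_1$, $B_2$, and then read off $B'$. First I would record the defining feature of the base graph: two bases are adjacent exactly when one is obtained from the other by a single exchange. Thus, since $B_1, B_2 \in N_{\BG(M)}(B)$, I can write $B_1 = (B \setminus \{a_1\}) \cup \{c_1\}$ and $B_2 = (B \setminus \{a_2\}) \cup \{c_2\}$ with $a_1, a_2 \in B$ and $c_1, c_2 \in E_M \setminus B$.

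Next I would use the nonadjacency of $B_1$ and $B_2$ to force $a_1 \neq a_2$ and $c_1 \neq c_2$: a direct computation of $B_1 \setminus B_2$ shows that if exactly one of $a_1 = a_2$, $c_1 = c_2$ holds (the case of both would give $B_1 = B_2$), then $|B_1 \setminus B_2| = 1$, making $B_1$ and $B_2$ adjacent. Hence for nonadjacent $B_1, B_2$ both the dropped elements and the added elements are genuinely distinct. This immediately yields $B_1 \cap B_2 = B \setminus \{a_1, a_2\}$, $B_1 \setminus B = \{c_1\}$, and $B_2 \setminus B = \{c_2\}$, so that the right-hand side of the claimed formula equals $(B \setminus \{a_1, a_2\}) \cup \{c_1, c_2\}$.

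The heart of the argument is to show that $B' = (B \setminus \{a_1, a_2\}) \cup \{c_1, c_2\}$. Here I would use that $B_1$ is a common neighbor while $\dist(B,B') = 2$, so $|B_1 \setminus B'| = 1$. Expanding $B_1 \setminus B' = (\{c_1\} \setminus B') \cup ((B \setminus B') \setminus \{a_1\})$ and analyzing cases, I expect that $c_1 \notin B'$ would force $|B \setminus B'| \leq 1$, contradicting $\dist(B,B') = 2$; hence $c_1 \in B' \setminus B$. The surviving possibility then pins down $a_1 \in B \setminus B'$ together with $|B \setminus B'| = 2$. Applying the identical analysis to $B_2$ gives $c_2 \in B' \setminus B$ and $a_2 \in B \setminus B'$. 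Since $a_1 \neq a_2$ and $|B \setminus B'| = 2$, we conclude $B \setminus B' = \{a_1, a_2\}$, and because all bases have the same cardinality, $|B' \setminus B| = 2$, so $B' \setminus B = \{c_1, c_2\}$. Combining these gives $B' = (B \cap B') \cup (B' \setminus B) = (B \setminus \{a_1, a_2\}) \cup \{c_1, c_2\}$, which is exactly the asserted formula.

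The main obstacle I anticipate is the case analysis in the third step, where one must rule out $c_1 \notin B'$ and simultaneously extract both $a_1 \in B \setminus B'$ and $|B \setminus B'| = 2$ from the single equation $|B_1 \setminus B'| = 1$; the nonadjacency of $B_1, B_2$ is essential precisely because it guarantees two distinct dropped elements $a_1, a_2$, which together exhaust the two-element set $B \setminus B'$. Alternatively, the four-cycle structure of $N_{\BG(M)}(B) \cap N_{\BG(M)}(B')$ from Lemma \ref{lem:common_neighbor} could be invoked to identify $B_1, B_2$ as an antipodal pair, but the direct exchange computation above seems cleaner and avoids appealing to the full cycle structure.
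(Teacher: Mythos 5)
Your proof is correct. Note, however, that the paper itself offers no proof of this lemma at all: it is stated purely as a citation to \cite[Corollary 3.2.1]{HNT}, so there is no in-paper argument to compare against. Your direct exchange computation is a valid, self-contained substitute: writing $B_1=(B\setminus\{a_1\})\cup\{c_1\}$ and $B_2=(B\setminus\{a_2\})\cup\{c_2\}$, using nonadjacency to force $a_1\neq a_2$ and $c_1\neq c_2$, and then using $|B_i\setminus B'|=1$ together with $\dist(B,B')=2$ (i.e.\ $|B\setminus B'|\geq 2$) to pin down $B\setminus B'=\{a_1,a_2\}$ and $B'\setminus B=\{c_1,c_2\}$ is exactly the right mechanism, and each of your case eliminations checks out (in particular, $c_1\notin B'$ would give $B\setminus B'\subseteq\{a_1\}$, contradicting the distance hypothesis). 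One pedantic point worth making explicit: you should note that nonadjacent vertices $B_1,B_2$ are in particular distinct, since the formula $B'=(B_1\cap B_2)\cup(B_1\setminus B)\cup(B_2\setminus B)$ fails if $B_1=B_2$; your parenthetical remark that $a_1=a_2$ and $c_1=c_2$ "would give $B_1=B_2$" implicitly uses this, so it deserves a sentence. Your closing observation is also apt: the lemma as used in the paper (inside the proof of Theorem \ref{genus_base_number}, via Lemma \ref{lem:recover}) is applied to pairs supplied by Lemma \ref{lem:common_neighbor}, but the proof of the labeling formula itself needs nothing beyond the exchange description, so avoiding the four-cycle structure is a genuine simplification rather than a gap.
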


By using Lemmata \ref{pi}, \ref{lem:detect_N(B)}, \ref{lem:common_neighbor}, and \ref{lem:label_fixed}, 
C.A.Holzmann, P.G. Norton and M.D. Tobey \cite{HNT}
prove the following: 

\begin{lem}[{\cite[Corollary 3.2.2]{HNT}}]\label{lem:recover}
Let $M$ be a matroid, and let $B$ be a vertex of $\BG(M)$.
For every nonnegative integer $i$, let us define
$V_B(i):=\{B'\in V(\BG(M)) | \dist(B,B') \leq i\}$.
Suppose that we do not know the whole graph $\BG(M)$, while we know only its
subgraph $\BG(M)[V_B(i)]$ for some nonnegative integer $i$. Furthermore,
suppose that we also know each of the two partitions of $N_{\BG(M)}(B)$
corresponding to $\pi(B,M)$ and $\pi'(B,M)$. Let us set $B:=\{x_1,\ldots,x_{\rho(M)}\}$
and $E_M\setminus B:=\{y_{1}, \ldots,y_{\rho(M^{*})}\}$. In this case, the labels on all
vertices of $\BG(M)[V_B(i)]$ can be determined. 
\end{lem}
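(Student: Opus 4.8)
The plan is to prove Lemma~\ref{lem:recover} by induction on the radius $i$, using the already-established lemmata to extend the labeling outward one distance-shell at a time. The base cases $i=0$ and $i=1$ are handled directly: for $i=0$ the vertex $B$ carries its own label by the hypothesis $B=\{x_1,\ldots,x_{\rho(M)}\}$, and for $i=1$ the labels on all of $N_{\BG(M)}[B]$ are supplied verbatim by Lemma~\ref{lem:detect_N(B)}, since we are given the induced subgraph on $N_{\BG(M)}[B]$ together with the two partitions $\pi(B,M)$ and $\pi'(B,M)$. This sets up the inductive scaffolding in which every vertex within distance $i-1$ of $B$ already has a known label.

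For the inductive step, I would assume that all vertices of $\BG(M)[V_B(i-1)]$ have been correctly labeled and then show how to label each vertex $B'$ with $\dist(B,B')=i$. Fix such a $B'$. Since $i\geq 2$, choose any vertex $W$ on a shortest $B$--$B'$ path with $\dist(B,W)=i-2$, so that $\dist(W,B')=2$ and $W$ is already labeled by the inductive hypothesis. Now apply Lemma~\ref{lem:common_neighbor} to the pair $(W,B')$: their common neighborhood contains two nonadjacent vertices $B_1,B_2$, each lying at distance $i-1$ from $B$ and hence already labeled. Lemma~\ref{lem:label_fixed} then recovers the label on $B'$ explicitly via the formula $B'=(B_1\cap B_2)\cup(B_1\setminus W)\cup(B_2\setminus W)$. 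Since every vertex at distance $i$ admits such a witness triple $(W,B_1,B_2)$ drawn entirely from the already-labeled shell $V_B(i-1)$, all of $V_B(i)$ becomes labeled, completing the induction.

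The main obstacle is verifying that the labels produced for a distance-$i$ vertex $B'$ are consistent and well defined: a priori $B'$ may be reachable through several different shortest paths and hence through several witness pairs $(W,B_1,B_2)$, and one must check that Lemma~\ref{lem:label_fixed} yields the same labeled base regardless of the choice. The resolution is that the underlying bases of $M$ corresponding to the vertices are genuine subsets of $E_M$ that are already intrinsically fixed by the matroid; Lemma~\ref{lem:label_fixed} merely \emph{computes} this predetermined set from known neighbors, so any valid witness triple must return the one correct base $B'$. Thus consistency is automatic, and the only real content is ensuring that at least one admissible witness triple exists for every distance-$i$ vertex, which is exactly what Lemma~\ref{lem:common_neighbor} guarantees.

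One subtlety worth flagging is the choice of the intermediate vertex $W$ at distance $i-2$ when $i=2$, in which case $W=B$ itself and the nonadjacent pair $B_1,B_2$ lies in $N_{\BG(M)}(B)$, already labeled from the $i=1$ step; the induction therefore dovetails cleanly with the base case and no separate argument is needed for small $i$. With these pieces in place, the statement follows, and the proof is essentially a bookkeeping argument that orchestrates Lemmata~\ref{pi}, \ref{lem:detect_N(B)}, \ref{lem:common_neighbor}, and \ref{lem:label_fixed} in the correct order.
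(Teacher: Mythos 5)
Your proof is correct and follows essentially the route the paper intends: an outward induction that seeds the labeling on $N_{\BG(M)}[B]$ with Lemma~\ref{lem:detect_N(B)} and then extends it shell by shell, using Lemma~\ref{lem:common_neighbor} to produce a nonadjacent pair of already-labeled common neighbors and Lemma~\ref{lem:label_fixed} to compute each new label. The paper itself does not reproduce this argument (it cites \cite{HNT}, noting only that the proof combines Lemmata~\ref{pi}, \ref{lem:detect_N(B)}, \ref{lem:common_neighbor}, and \ref{lem:label_fixed}), and your induction, including the checks that the relevant common neighbors lie at distance exactly $i-1$ and that the computed labels are automatically consistent, supplies exactly that bookkeeping.
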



\section{Proof of Theorem \ref{thm:main} (1)}\label{sec:3}

We note that the number of the loops and the coloops is known from 
$T^{(1)}$. So we assume that there exists neither a loop nor a coloop. 
Furthermore, we also assume that our matroid $M$ is non-separable simply
because each $T^{(g)}$ (resp. $R^{(g)}$) factorizes into a product of
the genus $g$ Tutte polynomials (resp. the genus $g$ Whitney rank
generating functions) of non-separable matroids. 
Now we show the following statement equivalent to Theorem \ref{thm:main}:
\begin{thm}\label{genus_base_number}
Every non-separable matroid $M$ can be reconstructed from its Whitney rank generating
function of genus $|\mathcal{B}(M)|$. 
\end{thm}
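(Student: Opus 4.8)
The plan is to reconstruct the matroid $M$ (up to isomorphism) from the combinatorial data encoded in $R^{(|\mathcal{B}(M)|)}(M)$ by first recovering the matroid base graph $\BG(M)$ together with enough local labelling data to apply the machinery of Section~\ref{sec:2}, and then invoking the Holzmann--Norton--Tobey reconstruction results (Lemmata~\ref{lem:detect_N(B)}, \ref{lem:common_neighbor}, \ref{lem:label_fixed}, and especially \ref{lem:recover}). First I would observe that, since $R^{(g)}$ is a sum over all $g$-tuples $(A_1,\ldots,A_g)$ of subsets of $E$, each monomial records, through its exponents, the ranks and sizes of the $A_i$, their pairwise intersections $A_i\cap A_j$, and their pairwise unions $A_i\cup A_j$. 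The key idea is to specialize attention to tuples whose components are \emph{bases} of $M$: when every $A_i$ is a basis $B_i$, the exponents $|A_i|-\rho(A_i)$ and $\rho(E)-\rho(A_i)$ vanish, while the intersection and union factors detect, via $|B_i\cap B_j|$ and $|B_i\cup B_j|$, precisely the distance $\dist(B_i,B_j)$ in $\BG(M)$ (since $|B_i\setminus B_j|=\rho(M)-|B_i\cap B_j|$). This is why the genus is chosen to equal $|\mathcal{B}(M)|$: with $g=|\mathcal{B}(M)|$ components available, one can place every basis of $M$ simultaneously into a single $g$-tuple and thereby read off the entire adjacency/distance structure of $\BG(M)$ from the relevant monomial.

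Next I would extract from the genus-$g$ data not merely the abstract graph $\BG(M)$ but the refined local information required by Lemma~\ref{lem:recover}: namely, for a chosen base $B$, the induced subgraph on $N_{\BG(M)}[B]$ and the two partitions $\pi(B,M)$, $\pi'(B,M)$ of the neighbourhood. The partition structure is exactly what distinguishes $M$ from $M^*$ (which share the same base graph), so recovering $\pi$ and $\pi'$ separately — rather than just their common refinement into adjacency — is what breaks the duality ambiguity and lets us pin down $M$ itself. The hope is that the distinction between an \emph{intersection} variable $x_{\cap(\lambda)},y_{\cap(\lambda)}$ and a \emph{union} variable $x_{\cup(\lambda)},y_{\cup(\lambda)}$ in the genus-$g$ polynomial carries the asymmetry needed: a triple of bases $B,B_1,B_2$ with $B_1,B_2\in N_{\BG(M)}(B)$ will contribute union/intersection factors that differ according to whether $B_1,B_2$ lie in a common class of $\pi$ or of $\pi'$, because (by Lemma~\ref{pi}) same-class pairs meet the sizes differently from cross-class pairs. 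Concretely, for neighbours $B_1=(B\setminus x_1)\cup y_1$ and $B_2=(B\setminus x_2)\cup y_2$, the values $|B_1\cap B_2|$ and $|B_1\cup B_2|$ take distinguishable values in the three cases ($x_1=x_2$, $y_1=y_2$, or neither), and these are read off from the exponents of the corresponding $\cap$- and $\cup$-variables.

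Once $\pi(B,M)$ and $\pi'(B,M)$ and the labelled closed neighbourhood $N_{\BG(M)}[B]$ are in hand, Lemma~\ref{lem:detect_N(B)} gives a consistent labelling of all bases at distance $1$ from $B$, and then Lemma~\ref{lem:recover} (iterating Lemmata~\ref{lem:common_neighbor} and~\ref{lem:label_fixed} outward through the distance classes $V_B(i)$) propagates a consistent labelling to all of $V(\BG(M))$, since $\BG(M)$ is connected for a non-empty matroid. The resulting labelled vertex set is exactly the family $\mathcal{B}(M)$ of bases as subsets of a fixed ground set, and a matroid is determined by its bases; this reconstructs $M$ up to isomorphism. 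I expect the main obstacle to be the middle step: rigorously showing that the \emph{union} and \emph{intersection} exponents in $R^{(g)}(M)$ suffice to recover the two partitions $\pi$ and $\pi'$ as an \emph{ordered/distinguishable} pair, rather than merely their symmetric combination. One must verify that the three local configurations of a neighbour-pair are separated by the $\cap/\cup$ data and, more delicately, that aggregating over all $g$-tuples does not wash out this local information — i.e., that one can isolate the contribution of tuples consisting entirely of bases (for instance by a specialization of the union variables, analogous to the genus-lowering specialization stated after the definition of $T^{(g)}$) so that the base-graph data is cleanly legible. The genus bound $|\mathcal{B}(M)|$ is precisely what makes this isolation possible, as it provides one coordinate per basis.
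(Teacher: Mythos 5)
Your first step---recovering $\BG(M)$ from the monomial $R^{(g)}(A_1,\ldots,A_g)$ in which every $A_i$ is a base, detecting bases by the vanishing of the exponents of $x_{A_i},y_{A_i}$ and adjacency by the exponent of $x_{A_i\cap A_j}$ being $1$---is exactly the paper's first step and is correct. The gap is in your second step. The mechanism you propose, namely that for two neighbours $B_1=(B\setminus\{x_1\})\cup\{y_1\}$ and $B_2=(B\setminus\{x_2\})\cup\{y_2\}$ of $B$ the exponents of the $\cap$- and $\cup$-variables take ``distinguishable values in the three cases,'' is false in the two cases that matter: if $x_1=x_2$ (same class of $\pi$) then $B_1\cap B_2=B\setminus\{x_1\}$ and if $y_1=y_2$ (same class of $\pi'$) then $B_1\cap B_2=(B\setminus\{x_1,x_2\})\cup\{y_1\}$; in both cases the intersection is independent of cardinality $\rho-1$ and the union has cardinality $\rho+1$ and rank $\rho$, so all four exponents coincide. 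The pairwise $\cap/\cup$ data therefore only recovers ``adjacent in $N_{\BG(M)}(B)$ or not,'' i.e.\ the symmetric combination of $\pi$ and $\pi'$ that Lemma~\ref{pi} already provides, and not the two partitions as a distinguishable pair. In fact no argument using only the all-bases monomial can close this gap: under the bijection $B_i\mapsto E\setminus B_i$, the all-bases monomial of $M$ coincides exponent-by-exponent with the all-cobases monomial of $M^*$ (if $|B_i\setminus B_j|=k$ then in both matroids the intersection is independent with $x$-exponent $k$, and the union is spanning with $y$-exponent $k$), so that monomial cannot distinguish $M$ from $M^*$ even though $R^{(g)}(M)$ itself does. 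The asymmetric information must come from a monomial involving a non-base, which your tuple, having spent all $g$ slots on bases, cannot supply.

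The paper's proof is structured precisely to get around this. It spends one of the $g$ slots on a circuit: it selects a monomial with $A_1=C_1$ a circuit of $m\geq 3$ elements satisfying $|C_1\setminus B_1|=1$, with $A_2=B_1$, and with $\{A_2,\ldots,A_g\}=\mathcal{B}(M)\setminus\{B_2\}$ for a base $B_2$ at maximal distance from $B_1$. The circuit singles out the class $q^{B_1}_{c_1}=\{A_3,\ldots,A_{m+1}\}$ of $\pi'(B_1,M)$---exactly the datum your approach fails to produce---after which Lemmata~\ref{lem:detect_N(B)}, \ref{lem:common_neighbor}, \ref{lem:label_fixed} and \ref{lem:recover} label all bases in the tuple. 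The price of sacrificing a slot is that one base is absent from the tuple, and the paper needs a further argument (via Lemmata~\ref{lem:distance1}, \ref{lem:every_base_has_a_3-circuit} and \ref{lem:|B-C|>0}, enumerating the families $\mathcal{X}_i$ of labelled bases spanning a set of rank $\rho$ and size $\rho+1$) to locate the missing base $S=B_2$. Your closing suggestion of a specialization of the union variables to isolate base-tuples does not address the obstruction, since the obstruction lives already inside the isolated monomial.
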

\begin{proof}

First, we show that the Whitney rank generating function
of genus $|\mathcal{B}(M)|$ constructs the matroid bases graph $\BG(M)$.  
Then we show that the Whitney 
rank generating function of the genus
$|\mathcal{B}(M)|$ reconstructs the matroid $M$ itself. 

\begin{enumerate}
\item 
Let $M=(E,\mathcal{I})$ be a matroid with $g$ bases. We claim that 
$R^{(g)}(M)$ constructs $\BG(M)$. 

In this case, $R^{(g)}(M)$ has a monomial $R^{(g)}(A_1,\ldots,A_g)$ 
such that each set $A_i(i=1,\ldots,g)$ corresponds
one-to-one to each base of $M$ and it can be detected as follows.
For a monomial of $R^{(g)}(M)$, it is clear that each set $A_i$
is a base of $M$ if and only if the variables $x_{A_i}$ and $y_{A_i}$
vanish (their exponents are zero) in the monomial.
In addition, it is also clear that two bases $A_i, A_j (i\neq j)$
are mutually distinct if and only if the exponent of the variable
$x_{A_i \cap A_j}$ is positive. 
By using this monomial $R^{(g)}(A_1,\ldots,A_g)$, we can obtain
a graph $G=(V(G),E(G))$ isomorphic to the matroid base graph $\BG(M)$
such that $V(G):=\{A_1, \ldots, A_g\}$ and
$E(G):=\{\{A_i,A_j\} | \textrm{ the exponent of } x_{A_i \cap A_j}
\textrm{ in } R^{(g)}(A_1,\ldots,A_g) \textrm{ is } 1\}$.

\item 
Let $M=(E,\mathcal{I})$ be a non-separable matroid. 
As we mentioned in Section \ref{sec:Intro}, 
the Tutte polynomial $T^{(1)}(M)$ gives the 
following numbers: 
\begin{itemize}
\item 
$\nu:=$ $|E|$
\item 
$\rho:=$ the rank of $M$
\item 
$\chi:=$ the corank of $M$
\item
$|\mathcal{B}(M)|$, the number of bases of $M$.
\end{itemize}
By Lemma \ref{lem:circuit}, we can assume that $M$ has a circuit of at least 
three elements. The matroid $M$ has two distinct bases $B_1,B_2$ such that
the distance $\dist(B_1,B_2)$ between these two vertices $B_1,B_2$ of $\BG(M)$
coincides with the diameter $\diam(\BG(M))$ of the graph $\BG(M)$. 
Here we can assume that the number $\diam(\BG(M))$ is at least $2$, for otherwise, 
combining Lemmata \ref{lem:distance1} and \ref{lem:every_base_has_a_3-circuit}, 
we have $\chi=1$, and $M$ turns out to be a single circuit, that is,
$M=U_{k-1,k}$ for some positive integer $k$, which is $T^{(1)}$-unique.
Hence we assume that $\dist(B_1,B_2)\geq 2$.  


From Lemma \ref{lem:every_base_has_a_3-circuit}, there exists a circuits $C_1$ of $M$
such that $|C_1|\geq 3$ and $|C_1\setminus B_1|=1$ hold.
Let $c_1$ be the unique element in $C_1\setminus B_1$. 

Let $g:=|\mathcal{B}(M)|$.
Then $R^{(g)}(M)$ has a monomial $R^{(g)}(A_1,\ldots,A_g)$ 
such that $A_1=C_1$, $A_2=B_1$, and 
$\{A_3,\ldots,A_{|C_1|+1}\}=q^{B_1}_{c_1}:=\{B\in N_{\BG(M)}(B_1)\mid c_1 \in B\}$
and $\{A_2,\ldots,A_g\}=\mathcal{B}(M)\setminus\{B_2\}$ hold.
We guess this monomial $R^{(g)}(A_1,\ldots,A_g)$ of $R^{(g)}(M)$.
That is, we take each monomial of $R^{(g)}(M)$ 
one by one, consider the monomial as if it were 
the monomial $R^{(g)}(A_1,\ldots,A_g)$, and 
check whether this candidate satisfies all of 
the properties and conditions described
in the proofs that follow. If we find that the candidate at hand does not satisfy even one
of these properties and conditions, we discard it. 

Since $|C_1|>2$, we have $A_1 \cup A_2=A_2\cup A_3=A_3 \cup A_4=A_4 \cup A_2=\cup_{2\leq i \leq |C_1|+1 } A_i$. 

Summarizing, we obtain $A_1, \ldots, A_{g}$ with a positive integer $m(\geq 3)$ such that 
\begin{itemize} 
  \item $A_{2}, \ldots, A_{g}$ are mutually distinct bases of $M$.
We can confirm this property by checking that the exponents of the variables
$x_{A_i}, y_{A_i} (i=2,\ldots,g)$ are zero and the exponents of the variables
$x_{A_i \cap A_j} (2 \leq i < j \leq g)$ are positive in our candidate monomial. 
\item $A_1$ is a circuit of $M$ of $m$ elements such that 
$|A_1 \setminus A_i|=1 (i=2,\ldots,m)$ hold. 
We can confirm this property by checking that,
for $2 \leq i \leq m$, the exponents of the variables
$x_{A_1}$ and $x_{A_1 \cap A_i}$ are all $\rho-m+1$,
the exponent of the variable $y_{A_1}$ is $1$, and
the exponents of the variables $y_{A_1 \cap A_i}$ are all $0$
(c.f. Lemma \ref{lem:distance1}) in our candidate monomial. 
  \item Let $H$ be a graph such that $V(H):=\{A_i| 2 \leq i \leq g \}$ and
$E(H):=\{\{A_i,A_j\} | \textrm{ the exponent of } x_{A_i \cap A_j} \textrm{ in } R^{(g)}(A_1,\ldots,A_g) \textrm{ is } 1\}$.
Let \\ $\BG(M)-B_2$ denote the resulting subgraph obtained by deleting the vertex $B_2$ from $\BG(M)$.
Then there exists a graph isomorphism $f: V(\BG(M)-B_2) \to V(H)$ such that $f(B_1)=A_2$ holds.
\end{itemize} 

Let $S$ denote the unique missing base in $\mathcal{B}(M)\setminus \{A_2,\ldots,A_g\}$.

From now on, we will show that 
we can reconstruct a correct labeling on all the bases $\mathcal{B}(M)$. 


Since our graph $H$ is isomorphic to the subgraph $\BG(M)-B_2$, 
we can treat this graph $H$ as if it is $\BG(M)-B_2$. 
From Lemma \ref{pi}, there is a unique pair $\{\pi, \pi'\}$ of partitions of 
$N_{H}(A_2)$ and only one of the two partitions (that is, $\pi'(A_2,M)$)
has the equivalence class $\{A_3,\ldots,A_{m+1}\}$. Thus we can detect which of
the two partitions of $N_{H}(A_2)$ is $\pi'(A_2,M)$. 
Combining the above with Lemmata \ref{lem:detect_N(B)}, \ref{lem:common_neighbor},
\ref{lem:label_fixed} and \ref{lem:recover}, we can recover the labels
on all bases $A_2,\ldots,A_g$ up to permutation of indices.


From now on, we will detect the labels on the unique missing base $S$ of $M$ on the outside of $V(H)$.
From Lemma \ref{lem:every_base_has_a_3-circuit}, $M$ has a circuit $C$ of at least $3$ elements such that $|B \cup C|=\rho +1$ holds.
Since $|C|>2$, from Lemma \ref{lem:distance1}, $B \cup C$ contains (at least) $3$ mutually distinct bases, namely, $S, A_{\sigma(1)}$ and $ A_{\sigma(2)}$.
In order to detect the labels on the circuit $C$, first we find candidates of the pair 
$\{A_{\sigma(1)}, A_{\sigma(2)}\}$ of distinct bases different from $S$, as follows:
By using the above labeling, we enumerate the families $\{\mathcal{X}_i\}$ of labeled bases
such that $\mathcal{X}_i=\{A_{\sigma_i(1)},A_{\sigma_i(2)},\ldots,A_{\sigma_i(k_i)}\}$,  
$|\cup_{j \leq k_i} A_{\sigma_i(j)}|=\rho+1$, 
$\forall A_i \in \{A_2,\ldots,A_g\} \setminus \{A_{\sigma_i(1)},\ldots,A_{\sigma_i(k_i)}\},
|A_i \cup (\cup_{j \leq k_i} A_{\sigma_i(j)})|\geq \rho+2$. 
That is, $\mathcal{X}_i$ denotes the family of labeled
bases of $V(H)$ included in
the set $A_{\sigma_i(1)} \cup A_{\sigma_i(2)}$. 
Let $C(\mathcal{X}_i):=\cup_{j \leq k_i} ((A_{\sigma_i(1)}\cup A_{\sigma_i(2)}) \setminus A_{\sigma_i(j)})$.
Then either (1): $M$ has a circuit $C'$of at least  $3$ elements such that
$C(\mathcal{X}_i)=C'\cap S$ holds, or
(2): The set $C(\mathcal{X}_i)$ is a circuit of $M$ of $k_i$ elements.

In the list of families $\{\mathcal{X}_i\}$, we can find a family 
$\mathcal{X}_j$ in the former case (1), as follows:
$M$ has a circuit $C'$ of at least $3$ elements that satisfies $C(\mathcal{X}_j)=C'\cap S$,
if and only if one of the following two mutually exclusive conditions holds.
\begin{enumerate}
  \item{If the set $A_{\sigma_j(1)}\cup A_{\sigma_j(2)}$ is a circuit of $M$,
  we can verify this fact by checking that the three conditions 
  $|A_{\sigma_j(1)}\cup A_{\sigma_j(2)}|=k_j+1$, $|\cap_{i=1}^{k_j} A_{\sigma_j(i)}|=1$ and $k_j=\rho$ hold.  
  In this case, we have that 
  $S=(A_{\sigma_j(1)}\cup A_{\sigma_j(2)})\setminus(\cap_{i=1}^{k_j} A_{\sigma_j(i)})$.}
  \item{Otherwise, from Lemmata \ref{lem:distance1} and \ref{lem:|B-C|>0},
  there exists $\mathcal{X}_{\ell} (\ell \neq j)$ such that
  $C(\mathcal{X}_j) \subsetneq C(\mathcal{X}_{\ell})$ and 
  $|C(\mathcal{X}_{\ell}) \setminus C(\mathcal{X}_{j})|=1$.
  In this case, we have that
  $S=(A_{\sigma_j(1)}\cup A_{\sigma_j(2)}) \setminus (C(\mathcal{X}_{\ell}) \setminus C(\mathcal{X}_j))$.}
\end{enumerate}
Hence, in either case, we can detect the labels on the missing base $S$.
This completes the proof.
\end{enumerate}
\end{proof}

\section{Proof of Theorem \ref{thm:main} (2)}\label{sec:4}

\subsection{Matroids $M,N$ such that $T^{(1)}(M)=T^{(1)}(N)$ and $T^{(2)}(M)\neq T^{(2)}(N)$}\label{subsec:4.1}

In \cite{Kahn}, the concept of ``matroid relaxation" was given. 
Given a matroid $M = (E,\mathcal{B})$ with a subset 
$X \subset E$ that is both a circuit
and a hyperplane, 
we can define a new matroid $M'$ as the matroid with basis
$\mathcal{B}'= \mathcal{B}\cup \{X\}$. That $M'$ is indeed a matroid 
is easy to check. The Tutte polynomial of $M'$ can
be computed easily from the Tutte polynomial of $M$ by 
\begin{align} 
T(M';x,y) = T(M;x,y) - xy + x + y. \label{rexlation} 
\end{align} 

In this subsection, $E$ denote the set $\{ z\mid z\in \ZZ, 1\leq z \leq 2n\}$. 
For an independence system $M$, 
let $\mathcal{B}(M)$ denote the set of maximal independent  set. 
In this subsection, let us define 
$X_1:=\{ z\mid z\in \ZZ, 1\leq z \leq n\}$, 
$X_2:=\{ z\mid z\in \ZZ, n+1\leq z \leq 2n\}$, 
$X_3:=(X_2\cup \{ n\}) \setminus \{ 2n\}$. 

Let $R_{2n}$ denote the independence system on $E$ such that  
$\mathcal{B}(R_{2n})=\mathcal{B}(U_{n, 2n}) 
\setminus \{ X_1, X_2\}$,
and let $Q_{2n}$ denote the independence system on $E$ such that  
$\mathcal{B}(Q_{2n})= \mathcal{B}(U_{n, 2n})\setminus \{ X_1, X_3 \}$.

Then, the following two facts can be easily proved. 
\begin{fact} 
For any positive integer $n$, $R_{2n}$ is a matroid. 
\end{fact}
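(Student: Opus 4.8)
The plan is to verify directly that $R_{2n}$, defined by specifying its bases as $\mathcal{B}(R_{2n}) = \mathcal{B}(U_{n,2n}) \setminus \{X_1, X_2\}$, satisfies the base exchange axiom. Recall that a non-empty family $\mathcal{B}$ of equicardinal subsets of $E$ is the family of bases of a matroid if and only if it satisfies the \emph{base exchange property}: for all $B_1, B_2 \in \mathcal{B}$ and all $x \in B_1 \setminus B_2$, there exists $y \in B_2 \setminus B_1$ such that $(B_1 \setminus \{x\}) \cup \{y\} \in \mathcal{B}$. Since $U_{n,2n}$ is the uniform matroid whose bases are \emph{all} $n$-element subsets of $E$, the task reduces to checking that removing the two specific $n$-sets $X_1 = \{1,\ldots,n\}$ and $X_2 = \{n+1,\ldots,2n\}$ preserves this exchange property.

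First I would observe that $X_1$ and $X_2$ are \emph{complementary}: $X_2 = E \setminus X_1$, so they are disjoint $n$-sets. The key structural consequence is that in the exchange axiom for $U_{n,2n}$, when we start from a base $B_1 \in \mathcal{B}(R_{2n})$ and remove an element $x$, the candidate replacements $(B_1 \setminus \{x\}) \cup \{y\}$ produced by varying $y \in B_2 \setminus B_1$ form a whole family of $n$-sets, and I only need to guarantee that \emph{at least one} valid choice of $y$ avoids landing on the forbidden sets $X_1$ or $X_2$. The heart of the argument is therefore a counting/pigeonhole step: for each $B_1 \in \mathcal{B}(R_{2n})$ and each $x \in B_1 \setminus B_2$, the set of admissible $y \in B_2 \setminus B_1$ is large enough that forbidding the at most two swaps that would hit $X_1$ or $X_2$ still leaves a legal exchange available.

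Concretely, I would argue as follows. Fix $B_1, B_2 \in \mathcal{B}(R_{2n})$ and $x \in B_1 \setminus B_2$. In $U_{n,2n}$ every $y \in B_2 \setminus B_1$ gives a base $(B_1 \setminus \{x\}) \cup \{y\}$; these $|B_2 \setminus B_1|$ sets are distinct. A swap can fail in $R_{2n}$ only if $(B_1 \setminus \{x\}) \cup \{y\}$ equals $X_1$ or $X_2$. For a fixed $x$, there is at most one $y$ making the result equal to $X_1$ and at most one $y$ making it equal to $X_2$ (since once the set is pinned down, $y$ is determined as its unique element outside $B_1 \setminus \{x\}$). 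Hence at most two choices of $y$ are blocked. Thus whenever $|B_2 \setminus B_1| \geq 3$ a valid exchange survives automatically, and I would dispose of the remaining small cases $|B_2 \setminus B_1| \in \{1,2\}$ by hand, using the complementarity of $X_1,X_2$ to show that $B_1 \setminus \{x\}$ cannot simultaneously be an $(n-1)$-subset of both $X_1$ and $X_2$ (they are disjoint), so the two forbidden swaps cannot both be active for the same $x$.

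The main obstacle I anticipate is the careful bookkeeping in the boundary case $|B_2 \setminus B_1| = 1$, where there is only a single candidate $y$ and I must rule out that this unique swap is the one blocked by $X_1$ or $X_2$. Here I would exploit the fact that $B_1$ itself already lies in $\mathcal{B}(R_{2n})$ (so $B_1 \neq X_1, X_2$) together with the disjointness $X_1 \cap X_2 = \emptyset$: if the forced swap produced $X_1$, then $B_1 \setminus \{x\} \subset X_1$ and $y \in X_1$, forcing $B_1$ to differ from $X_1$ in exactly the element $x$, which combined with $x \in B_1 \setminus B_2$ and the analogous constraint from $B_2$ yields a contradiction with $B_1, B_2 \in \mathcal{B}(R_{2n})$. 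A symmetric analysis handles the $X_2$ case. Once these edge cases are cleared, the base exchange property holds for all of $\mathcal{B}(R_{2n})$, and so $R_{2n}$ is a matroid.
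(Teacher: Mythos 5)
Your proof is correct, and there is in fact no authorial proof to compare it against: the paper merely asserts that this fact ``can be easily proved'' and moves on, so your base-exchange verification fills a gap rather than duplicating or diverging from an existing argument. Two simplifications would tighten it. First, since $X_1\cap X_2=\emptyset$, for a fixed $x$ the conditions $B_1\setminus\{x\}\subseteq X_1$ and $B_1\setminus\{x\}\subseteq X_2$ cannot hold simultaneously once $n\ge 2$, so at most \emph{one} choice of $y$ is ever blocked; hence $|B_2\setminus B_1|\ge 2$ already suffices, and only $|B_2\setminus B_1|=1$ needs separate treatment. Second, that remaining case is immediate: if $x$ is the unique element of $B_1\setminus B_2$, the only candidate swap $(B_1\setminus\{x\})\cup\{y\}$ is exactly $B_2$, which lies in $\mathcal{B}(R_{2n})$ by hypothesis, so no contradiction argument involving $X_1$ and $X_2$ is needed (your sketch reaches the same conclusion, just more laboriously). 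More structurally, your pigeonhole argument is a special case of the standard fact that deleting from $U_{n,2n}$ any family of $n$-sets pairwise intersecting in at most $n-2$ elements leaves a (sparse paving) matroid; here the two deleted sets are disjoint, the extreme case. One caveat: for $n=1$ the family $\mathcal{B}(R_2)=\mathcal{B}(U_{1,2})\setminus\{X_1,X_2\}$ is empty, so $R_2$ fails the non-emptiness axiom and is not a matroid; your reduction to the exchange property silently assumes $\mathcal{B}(R_{2n})\neq\emptyset$, i.e.\ $n\ge 2$. This is a looseness in the paper's statement (``any positive integer $n$'') rather than a flaw in your argument, and the paper assumes $n\ge 3$ in everything that follows.
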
 
\begin{fact} 
The independent system $Q_{4}$ is not a matroid since the application of the independent set exchange property to 
independent sets $\{ \{ 1,3\}, \{ 2\}\}$
forces one of $\{ 1, 2\}$ and $\{ 2, 3\}$ to be independent. 
However, $Q_{2n}$ is a matroid if $n\geq 3$. 
Note that the dual matroid of $R_{2n}$ is itself, 
and the dual matroid of $Q_{2n}$ is isomorphic to $Q_{2n}$. 
\end{fact}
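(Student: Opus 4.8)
The plan is to reduce all four assertions of this Fact to a single basis-exchange criterion for the uniform matroid together with two short complementation computations. Concretely, I would first isolate the following claim: for $n\ge 2$ and two distinct $n$-subsets $B,B'$ of $E$, the family $\binom{E}{n}\setminus\{B,B'\}$ is the set of bases of a matroid on $E$ if and only if $|B\setminus B'|\ge 2$. This one statement drives both the positive part ($Q_{2n}$ is a matroid for $n\ge 3$) and the negative part ($Q_4$ is not), so establishing it is the core of the work. I would verify it through the basis-exchange axiom, which is equivalent to (I1)--(I3) (see \cite{Oxley}); note the remaining family is nonempty since $\binom{2n}{n}>2$ for $n\ge 2$.

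For the ``if'' direction, take bases $B_1,B_2$ of $\binom{E}{n}\setminus\{B,B'\}$ and an element $x\in B_1\setminus B_2$. In $U_{n,2n}$ every $y\in B_2\setminus B_1$ yields a base $(B_1\setminus\{x\})\cup\{y\}$, and the map $y\mapsto(B_1\setminus\{x\})\cup\{y\}$ is injective, so at most two such exchanges can land on the forbidden sets $B,B'$. Hence a valid exchange can fail to exist only when $|B_2\setminus B_1|=2$ (the case $|B_2\setminus B_1|=1$ reproduces $B_2$ itself, and $|B_2\setminus B_1|\ge 3$ leaves a free choice), and then $B,B'$ would share the $(n-1)$-set $B_1\setminus\{x\}$ and differ in a single element, contradicting $|B\setminus B'|\ge 2$. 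For the ``only if'' direction I would exhibit an explicit failure when $|B\setminus B'|=1$: writing $B=K\cup\{a\}$, $B'=K\cup\{b\}$ with $|K|=n-1$, the sets $B_1=K\cup\{c\}$ and $B_2=(K\setminus\{k^\ast\})\cup\{a,b\}$ (available because $n\ge 2$ leaves room to pick $k^\ast\in K$ and $c\notin K\cup\{a,b\}$) are admissible bases for which deleting $c$ from $B_1$ can only reproduce $B$ or $B'$. This $|B_2\setminus B_1|=2$ analysis is the one delicate point of the argument; everything after it is bookkeeping.

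Applying the criterion is then immediate. Since $X_1\cap X_3=\{n\}$ we have $|X_1\setminus X_3|=n-1$, which is $\ge 2$ exactly when $n\ge 3$; so $Q_{2n}$ is a matroid for $n\ge 3$, while for $n=2$ the value is $1$ and the criterion fails. For $Q_4$ this is precisely the obstruction from the augmentation axiom (I3) already recorded in the statement, with $I_1=\{2\}$ and $I_2=\{1,3\}$ forcing $X_1=\{1,2\}$ or $X_3=\{2,3\}$ to be independent. The same criterion also underlies Fact $1$, since $X_1$ and $X_2$ are disjoint.

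Finally, for the duality claims I would use $\mathcal{B}(M^\ast)=\{E\setminus B\mid B\in\mathcal{B}(M)\}$ together with the fact that complementation is a bijection of $\binom{E}{n}$, so the dual of ``$U_{n,2n}$ with $\{B,B'\}$ deleted'' is ``$U_{n,2n}$ with $\{E\setminus B,\,E\setminus B'\}$ deleted.'' For $R_{2n}$ one has $E\setminus X_1=X_2$ and $E\setminus X_2=X_1$, so the deleted pair is preserved setwise and $R_{2n}^\ast=R_{2n}$ literally. For $Q_{2n}$ the dual deletes $\{X_2,\,E\setminus X_3\}$, and since $|X_2\cap(E\setminus X_3)|=|\{2n\}|=1=|X_1\cap X_3|$, both deleted pairs consist of two $n$-subsets meeting in a single point. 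Because $\Aut(U_{n,2n})$ is the full symmetric group $S_{2n}$, which acts transitively on unordered pairs of $n$-subsets of a prescribed intersection size, a relabeling of $E$ carries one pair to the other, giving $Q_{2n}^\ast\cong Q_{2n}$.
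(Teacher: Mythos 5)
Your proof is correct, and it takes a genuinely different route from the paper, which in fact offers no proof of this Fact at all: it is introduced by ``the following two facts can be easily proved,'' and the only justification on record is the inline (I3)-obstruction for $Q_4$, which you recover. Your route is to isolate a general criterion: for $n\ge 2$ and distinct $n$-sets $B,B'\subset E$ with $|E|=2n$, the family $\binom{E}{n}\setminus\{B,B'\}$ is the basis family of a matroid if and only if $|B\setminus B'|\ge 2$. Your counting argument for the ``if'' direction is sound (two forbidden exchanges from $B_1$ at $x$ would both contain the $(n-1)$-set $B_1\setminus\{x\}$, forcing $|B\setminus B'|=1$), as is the explicit exchange failure you exhibit when $|B\setminus B'|=1$. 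The applications are also right: $|X_1\setminus X_3|=n-1$ pins down the threshold $n\ge 3$ exactly and explains why $n=2$ fails; $E\setminus X_1=X_2$ gives literal self-duality of $R_{2n}$; and $|X_2\cap(E\setminus X_3)|=|\{2n\}|=1=|X_1\cap X_3|$ together with transitivity of $\Aut(U_{n,2n})=S_{2n}$ on pairs of $n$-sets with prescribed intersection size gives $Q_{2n}^{*}\cong Q_{2n}$ (concretely, the shift $z\mapsto z+n \pmod{2n}$ is such an isomorphism, carrying $\{X_1,X_3\}$ to $\{X_2,E\setminus X_3\}$). What your criterion buys is uniformity: it proves the preceding Fact on $R_{2n}$, both halves of this Fact, and the sharpness at $n=2$ in one stroke, and it is the basis-exchange counterpart of the (I3)-plus-unique-3-circuit verification the paper does write out later for $S_{4n}$ and $S'_{4n}$. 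One small caveat: your side remark that the criterion ``underlies'' the $R_{2n}$ Fact is valid only for $n\ge 2$; for $n=1$ disjointness gives just $|X_1\setminus X_2|=1$, and indeed $\mathcal{B}(R_2)=\emptyset$, so the paper's assertion ``for any positive integer $n$'' is itself off at $n=1$ --- but this does not affect the statement under review.
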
 
Therefore, $n\geq 3$ is a natural assumption in the following propositions.  

\begin{prop} 
For any integer $n\geq 3$, $T^{(1)}(R_{2n})
=T^{(1)}(Q_{2n})$ holds. 
\end{prop}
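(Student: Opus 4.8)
The plan is to show that $R_{2n}$ and $Q_{2n}$ are related by a single matroid relaxation, and then apply the relaxation formula \eqref{rexlation} together with the fact that $T^{(1)}$ is essentially the ordinary Tutte polynomial. First I would observe that both $R_{2n}$ and $Q_{2n}$ are obtained from the uniform matroid $U_{n,2n}$ by deleting exactly two of its bases from $\mathcal{B}(U_{n,2n})$: we remove $\{X_1,X_2\}$ to get $R_{2n}$, and $\{X_1,X_3\}$ to get $Q_{2n}$. The key point is that deleting a basis that is simultaneously a circuit and a hyperplane of the enlarged matroid is exactly the inverse operation of a relaxation. So I would identify, for each of the two matroids, the ``intermediate'' matroid obtained by adding back the appropriate set as a basis, and check that the set being added is a circuit-hyperplane in that intermediate matroid.

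Concretely, let me set $U:=U_{n,2n}$ and consider the matroid $P$ with $\mathcal{B}(P)=\mathcal{B}(U)\setminus\{X_1\}$; one checks that $X_1$ is a circuit and a hyperplane of $P$ (every $n$-subset other than $X_1$ is a basis, so $X_1$ has rank $n-1$ and is a flat, i.e.\ a hyperplane, while adding any element keeps it dependent, so it is a circuit). Then $R_{2n}$ arises from $P$ by further removing $X_2$, and $Q_{2n}$ arises from $P$ by removing $X_3$; equivalently, $P$ is the relaxation of both $R_{2n}$ (relaxing $X_2$) and $Q_{2n}$ (relaxing $X_3$). Applying \eqref{rexlation} twice, once in each direction from $U$, gives
\begin{align*}
T(R_{2n};x,y) &= T(U;x,y) - 2(xy-x-y),\\
T(Q_{2n};x,y) &= T(U;x,y) - 2(xy-x-y),
\end{align*}
provided that in each case the two sets we remove are genuinely circuit-hyperplanes of the corresponding intermediate matroids. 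Hence $T(R_{2n};x,y)=T(Q_{2n};x,y)$, and since $T^{(1)}$ is (up to the trivial change of variables recorded in the definition) the ordinary Tutte polynomial, the two genus-$1$ invariants agree.

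The main obstacle I anticipate is the bookkeeping in verifying the circuit-hyperplane conditions at each stage, since the relaxation formula \eqref{rexlation} is only valid when the relaxed set is both a circuit and a hyperplane of the smaller matroid. In particular, $X_1$, $X_2$, and $X_3$ are all $n$-element subsets, and I must confirm that after removing one basis from $U$ the removed set really does become a circuit (minimal dependent) and a hyperplane (maximal flat of rank $n-1$); this is clear for $X_1$ and $X_2$ by symmetry, but $X_3=(X_2\cup\{n\})\setminus\{2n\}$ breaks the symmetry and requires a direct check that it too is a circuit-hyperplane in the relevant intermediate matroid. A cleaner route, which I would prefer if the direct check becomes delicate, is to avoid the intermediate matroids entirely: both $R_{2n}$ and $Q_{2n}$ are obtained from $U_{n,2n}$ by deleting two bases, and one can verify directly from the rank function that each of the two deleted $n$-sets is a circuit-hyperplane of $U_{n,2n}$ itself (every proper $n$-subset of $U$ is both), so each deletion contributes the same correction term $-(xy-x-y)$ independently. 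Since the correction depends only on the \emph{number} of circuit-hyperplanes removed and not on which ones, and both matroids remove exactly two, the Tutte polynomials coincide, giving $T^{(1)}(R_{2n})=T^{(1)}(Q_{2n})$.
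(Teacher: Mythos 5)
Your main argument is essentially the paper's own proof: the paper also observes that $U_{n,2n}$ is obtained from each of $R_{2n}$ and $Q_{2n}$ by two relaxations and applies the relaxation formula \eqref{rexlation} twice, so your identification of the intermediate matroid $P$ with $\mathcal{B}(P)=\mathcal{B}(U_{n,2n})\setminus\{X_1\}$ and the verification that $X_1$, $X_2$, $X_3$ are circuit--hyperplanes of the appropriate smaller matroids is exactly the content the paper leaves implicit. One small slip: with the paper's convention that $T(M')=T(M)-xy+x+y$ where $M'$ is the \emph{relaxed} (larger) matroid, the correction runs the other way, namely $T(R_{2n})=T(U_{n,2n})+2(xy-x-y)$ and likewise for $Q_{2n}$; since your sign error is the same on both sides, the equality $T^{(1)}(R_{2n})=T^{(1)}(Q_{2n})$ is unaffected.

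However, the ``cleaner route'' you say you would prefer is based on a false premise and should be discarded. It is not true that ``every proper $n$-subset of $U_{n,2n}$ is both a circuit and a hyperplane'': in $U_{n,2n}$ every $n$-subset is a \emph{basis}, the circuits are the $(n+1)$-subsets, and the hyperplanes are the $(n-1)$-subsets, so $U_{n,2n}$ has no circuit--hyperplanes at all. The relaxation formula \eqref{rexlation} requires the relaxed set to be a circuit--hyperplane of the \emph{smaller} matroid (the one in which it is not a basis), which is why the verification must be carried out in $P$, $R_{2n}$, and $Q_{2n}$ --- precisely what your first route does, and where the $n\geq 3$ hypothesis (and the asymmetry of $X_3$, which meets $X_1$ in the element $n$) actually enters. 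The heuristic that ``the correction depends only on the number of bases removed'' is only justified \emph{a posteriori} by exhibiting the chain of two legitimate relaxations; it cannot replace that check.
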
 
\begin{proof} 
Since we obtain $U_{n, 2n}$ by two relaxations from both of $R_{2n}$ and $Q_{2n}$, 
Therefore, the formula (\ref{rexlation}) yields 
\begin{align*} 
 &\, \, \quad T^{(1)}(R_{2n}; x, y)\\ 
&=T^{(1)}(Q_{2n}; x, y) \\ 
&=T^{(1)}(U_{n, 2n})+2(x y-x-y) \\ 
&=\sum_{i=0}^n
\binom{2n}{i}(x-1)^{n-i} +
\sum_{i=n+1}^{2n}
\binom{2n}{i}(y-1)^{i-n}+2(x y-x-y). \qedhere 
\end{align*} 
\end{proof}

\begin{prop} 
For any integer $n\geq 3$, $T^{(2)}(R_{2n})\not=T^{(2)}(Q_{2n})$ holds. 
\end{prop}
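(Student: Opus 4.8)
The plan is to expand $T^{(2)}(M)$ in the factors $(x_\lambda-1)$ and $(y_\lambda-1)$. Directly from the definition, $T^{(2)}(M)$ is a sum, over all ordered pairs $(A_1,A_2)$ of subsets of $E$, of a single monomial in these eight factors, the exponents being $\rho(E)-\rho(A)$ and $|A|-\rho(A)$ as $A$ runs through $A_1$, $A_2$, $A_1\cap A_2$, and $A_1\cup A_2$, matched respectively to the factors carrying subscripts $1$, $2$, $\cap\{1,2\}$, and $\cup\{1,2\}$. Consequently $T^{(2)}(R_{2n})=T^{(2)}(Q_{2n})$ if and only if every such monomial is produced by the same number of pairs in the two matroids, and it therefore suffices to exhibit a single monomial whose multiplicity differs.

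Next I would single out the monomial
\[
\mathfrak{m}:=(x_1-1)(x_2-1)(x_{\cap\{1,2\}}-1)^n\,(y_1-1)(y_2-1)(y_{\cup\{1,2\}}-1)^n,
\]
with all remaining variables occurring to the zeroth power, and determine exactly which pairs $(A_1,A_2)$ contribute it. The exponents of $x_i-1$ and $y_i-1$ force $\rho(A_i)=n-1$ and $|A_i|=n$ for $i=1,2$; that is, each $A_i$ must be a dependent $n$-element set, hence a circuit-hyperplane. The exponent $n$ on $x_{\cap\{1,2\}}-1$ forces $\rho(A_1\cap A_2)=0$, i.e. $A_1\cap A_2=\emptyset$ since both matroids are loopless; the exponent $n$ on $y_{\cup\{1,2\}}-1$ together with the vanishing exponent on $x_{\cup\{1,2\}}-1$ forces $|A_1\cup A_2|=2n$, i.e. $A_1\cup A_2=E$. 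One then checks that the two remaining exponents are automatically $0$.

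The decisive step is to count these pairs in each matroid. Since $R_{2n}$ and $Q_{2n}$ arise from $U_{n,2n}$ by deleting exactly two $n$-sets from the family of bases, their only dependent $n$-sets are the two deleted ones, each of rank $n-1$; thus the circuit-hyperplanes of $R_{2n}$ are precisely $X_1,X_2$ and those of $Q_{2n}$ are precisely $X_1,X_3$. In $R_{2n}$ one has $X_1\cap X_2=\emptyset$ and $X_1\cup X_2=E$, so exactly the two ordered pairs $(X_1,X_2)$ and $(X_2,X_1)$ contribute $\mathfrak{m}$, giving it coefficient $2$. In $Q_{2n}$, however, $X_1\cap X_3=\{n\}\neq\emptyset$ and $X_1\cup X_3=E\setminus\{2n\}\neq E$, so no pair of circuit-hyperplanes meets the disjointness-and-covering requirement, and the coefficient of $\mathfrak{m}$ is $0$. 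The two coefficients differ for every $n\geq 3$, which establishes $T^{(2)}(R_{2n})\neq T^{(2)}(Q_{2n})$.

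I expect the only real care to be needed in the rank bookkeeping: verifying that no pair $(A_1,A_2)$ involving sets of cardinality other than $n$ can accidentally realize the exponent vector of $\mathfrak{m}$, and confirming that the two designated sets are genuinely the sole dependent $n$-sets, so that $\rho$ equals $n-1$ on them and attains the full value $n$ on all other $n$-sets. Both points follow at once from the observation that $R_{2n}$ and $Q_{2n}$ agree with the uniform matroid $U_{n,2n}$ except on the deleted bases; the rest is the elementary bookkeeping of the intersections and unions of $X_1,X_2,X_3$.
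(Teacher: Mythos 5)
Your proof is correct and takes essentially the same approach as the paper: both single out the monomial generated by the ordered pairs formed from $(X_1,X_2)$, a disjoint pair of circuit--hyperplanes covering $E$, which exists in $R_{2n}$ but has no counterpart in $Q_{2n}$ because $X_1\cap X_3=\{n\}\neq\emptyset$. In fact your exponents are the correct ones --- the paper's displayed term $(x_1-1)^{n-1}(y_1-1)(x_2-1)^{n-1}(y_2-1)(x_{\cap(1,2)}-1)^{n}(y_{\cup(1,2)}-1)^{n}$ appears to contain a typo, since $\rho(E)-\rho(X_i)=n-(n-1)=1$ (as confirmed by the formula for $h(R_{2n};0,n,n)$ in the Appendix) --- and your explicit remark that equality of $T^{(2)}$ forces equal multiplicities for every exponent vector makes the coefficient comparison airtight.
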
 
\begin{proof}
Let $(A,B)$ be a pair of sets such that $A \cap B=\emptyset, |A|=|B|=n$ and $\rho(A)=\rho(B)=n-1$ hold.
The matroid $R_{2n}$ has such a pair while $Q_{2n}$ does not. 
Hence the Tutte polynomial $T^{(2)}(R_{2n})$ have the term 
\begin{align*} 
(x_1-1)^{n-1}(y_1-1)(x_2-1)^{n-1}(y_2-1)(x_{\cap(1,2)}-1)^{n}(y_{\cup(1,2)}-1)^{n}, 
\end{align*} 
which is generated by $(A, B)=(X_1, X_2)$. 
Since $T^{(2)}(Q_{2n})$ does not have this term, we have $T^{(2)}(R_{2n})\not=T^{(2)}(Q_{2n})$.  
\end{proof} 
These two propositions prove Theorem 1. 1 (2). 
\begin{rem} 
We can calculate the proper small difference 
\begin{align*} 
&\quad \, T^{(2)}( R_{2n}; x_1, x_2, x_3, x_4, y_1, y_2, y_3, y_4) 
-T^{(2)}( Q_{2n}; x_1, x_2, x_3, x_4, y_1, y_2, y_3, y_4) \nonumber \\ 
&=2( x_1y_1-x_1-y_1 ) (x_2y_2-x_2-y_2)(x_3y_4-x_3-y_4)
(x_3-1)^{n-1} (y_4-1)^{n-1}, 
\end{align*} 
where we use $x_3$, $x_4$, $y_3$ and $y_4$ instead of 
$x_{\cap(1,2)}$, $x_{\cup(1,2)}$, $y_{\cap(1,2)}$ and $y_{\cup(1,2)}$ in short.   
\end{rem} 
The calculation is not difficult, but very long. 
Therefore, we leave the proof in the Appendix.

\subsection{Non-isomorphic matroids which need higher genus}\label{sec:4.2}
In this subsection, we always assume that the ground set $E$ of our matroids is the cyclic group 
$\mathbb{Z}_{4n}$, where $n\geq 3$. Moreover, when there is no fear of confusion, 
the cyclic group $\mathbb{Z}_{4n}$ is identified as the set of integers  $\{ z\mid 0\leq z\leq 4n-1\}$. 

\begin{df} 
Let $\mathcal{F}_1$, $\mathcal{F}_2$, $\mathcal{F}_3$, $\mathcal{F}_4$ 
be the families of subsets of $E$ defined by 
\begin{align*} 
\mathcal{F}_1&:=\{ \{ 2i, 2i+1, 2i+2\} \mid 0\leq i\leq n-2\} \cup \{ \{ 2n-2, 2n-1, 0\} \}, \\ 
\mathcal{F}_2&:=\{ \{ 2i, 2i+1, 2i+2\} \mid n\leq i\leq 2n-2\} \cup \{ \{ 4n-2, 4n-1, 2n\} \}, \\ 
\mathcal{F}_3&:=\{ \{ 2i, 2i+1, 2i+2\} \mid 0\leq i\leq n-1\}, \\ 
\mathcal{F}_4&:=\{ \{ 2i, 2i+1, 2i+2\} \mid n\leq i\leq 2n-1\} . 
\end{align*} 
We say that $\mathcal{F}_1$ and  $\mathcal{F}_2$ are \textit{circuital loose cycles} of length $n$, 
and $\mathcal{F}_3$ and  $\mathcal{F}_4$ are \textit{circuital loose paths} of length $n$. 
Let $S_{4n}$ and $S'_{4n}$ be the independent systems as follows; 
\begin{align*} 
\mathcal{B}(S_{4n})&=\mathcal{B}(U_{3, 4n}) \setminus  
(\mathcal{F}_1 \cup \mathcal{F}_2); \\ 
\mathcal{B}(S'_{4n})&=\mathcal{B}(U_{3, 4n})\setminus  
(\mathcal{F}_3 \cup \mathcal{F}_4). 
\end{align*} 
\end{df} 
%
%
\begin{fact} 
Notice that $S'_{4}$ is not a matroid because the independent sets $\{ 0,2\}$ and $\{ 0, 1, 3\}$ force 
one of $\{ 0,1,2\}$ and $\{ 2,3,0\}$ to be independent. 
For the same reason, $S_8$ is not a matroid. 
Therefore, the assumption $n\geq 3$ is essential. 
\end{fact}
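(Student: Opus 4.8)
The plan is to prove that each of these systems fails to be a matroid by exhibiting a direct violation of the independent-set exchange axiom (I3), exactly as foreshadowed in the statement. The key conceptual point I would first make explicit is that each of $S'_{4n}$ and $S_{4n}$ is obtained from the uniform matroid $U_{3,4n}$ by \emph{deleting} a family of triples from its collection of independent sets. Deleting only maximal-size sets keeps the family downward closed, so each is a legitimate independence system and the single axiom still in question is (I3); moreover, since no set of size at most $2$ is ever removed, every pair that is independent in $U_{3,4n}$ stays independent. Hence the augmentation axiom can fail only through the absence of an independent triple, never through the loss of a smaller set, and it suffices to locate one independent pair whose every triple-extension has been deleted.

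For $S'_4$ I would specialize the definition to $n=1$ and $E=\ZZ_4$. Here $\mathcal{F}_3=\{\{0,1,2\}\}$ and $\mathcal{F}_4=\{\{2,3,0\}\}$, so the deleted triples are precisely $\{0,1,2\}$ and $\{0,2,3\}$. I then take $I_1:=\{0,2\}$ and $I_2:=\{0,1,3\}$; both are independent in $S'_4$ (the former is a pair, and the latter is a triple not lying in $\mathcal{F}_3\cup\mathcal{F}_4$), while $|I_1|<|I_2|$. The only elements of $I_2\setminus I_1=\{1,3\}$ produce the candidate augmentations $\{0,1,2\}$ and $\{0,2,3\}$, both of which have been deleted and are therefore dependent. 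Thus no element of $I_2\setminus I_1$ augments $I_1$, contradicting (I3), so $S'_4$ is not a matroid.

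For $S_8$ I would run the analogous computation with $n=2$ and $E=\ZZ_8$: one finds $\mathcal{F}_1=\{\{0,1,2\},\{2,3,0\}\}$ and $\mathcal{F}_2=\{\{4,5,6\},\{6,7,4\}\}$, so the deleted triples again include $\{0,1,2\}$ and $\{0,2,3\}$. Consequently the very same pair $I_1=\{0,2\}$, $I_2=\{0,1,3\}$ witnesses the failure of (I3), which is precisely the sense of ``for the same reason.'' Finally, since $S'_4$ realizes the case $n=1$ and $S_8$ the case $n=2$, these two counterexamples show that the constructions need not produce matroids when $n<3$, and hence that the hypothesis $n\geq 3$ cannot be dropped.

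I expect the only real obstacle to be bookkeeping rather than mathematics: I must verify carefully that $\{0,1,3\}$ is genuinely independent (that is, not accidentally among the deleted triples in either system) and that the two triple-extensions of $\{0,2\}$ in the relevant $\ZZ_4$ or $\ZZ_8$ are \emph{exactly} the deleted sets, leaving no third escape route for the augmentation. Once the index arithmetic modulo $4$ and modulo $8$ is pinned down, the exchange-axiom violation is immediate.
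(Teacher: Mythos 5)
Your proposal is correct and follows exactly the argument the paper embeds in the statement of the Fact itself: for $S'_4$ (the case $n=1$) the deleted triples are precisely $\{0,1,2\}$ and $\{0,2,3\}$, so the pair $I_1=\{0,2\}$, $I_2=\{0,1,3\}$ violates (I3), and for $S_8$ (the case $n=2$) the families $\mathcal{F}_1,\mathcal{F}_2$ delete the same two triples among others, so the identical witness works. Your explicit verification that downward closure holds and that no third augmentation exists is just a careful spelling-out of what the paper leaves implicit.
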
 

\begin{rem} The odd elements of $E$ are essential for the same reason that $Q_4$ does not satisfy the axioms of independent sets. 
\end{rem} 

\begin{fact} \label{uniqueness_big_circuit}
Let $M$ be one of $S_{4n}$ and $S'_{4n}$, and $y, z\in \ZZ_{4n}$ be elements. 
Then, the number of circuits of three elements containing both $y$ and $z$ is at most one. 
\end{fact}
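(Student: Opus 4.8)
The plan is to analyze the combinatorial structure of the forbidden families and count three-element circuits through a fixed pair directly. Recall that $M$ is obtained from the uniform matroid $U_{3,4n}$ by declaring certain three-element sets to be \emph{dependent}: for $S_{4n}$ these are the triples in $\mathcal{F}_1\cup\mathcal{F}_2$, and for $S'_{4n}$ those in $\mathcal{F}_3\cup\mathcal{F}_4$. Since every three-element subset of $E$ is a basis of $U_{3,4n}$, a three-element set becomes a circuit of $M$ precisely when it is one of the forbidden triples (these are minimal dependent sets, as every proper subset has size at most two and is independent). Therefore the three-element circuits of $M$ are exactly the members of the relevant forbidden family, and the statement reduces to a purely combinatorial claim about these explicit families of triples.

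First I would fix the two elements $y,z\in\mathbb{Z}_{4n}$ and ask how many triples in the forbidden family contain both. Each family consists of consecutive-indexed triples of the very specific shape $\{2i,2i+1,2i+2\}$ (together with one or two wrap-around triples in the cyclic cases $\mathcal{F}_1,\mathcal{F}_2$). The key structural observation is that any such triple is determined by its \emph{even endpoints} $2i$ and $2i+2$, and it contains exactly one odd element, namely $2i+1$, sandwiched strictly between them. So I would split into cases according to the parities of $y$ and $z$: if both are even, a triple containing them must have $\{2i,2i+2\}=\{y,z\}$, forcing $y,z$ to be even numbers differing by $2$ and pinning down $i$ uniquely; if one is odd and one even, the odd element must be the middle $2i+1$, again determining $i$; if both are odd, no triple of this shape can contain two odd elements, so the count is zero.

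The main point to verify carefully is that across the whole family (both the two sub-families making up each $M$ and the wrap-around triples) no pair $\{y,z\}$ lies in two different triples. The potential obstacle is overlap at the boundaries: consecutive triples $\{2i,2i+1,2i+2\}$ and $\{2i+2,2i+3,2i+4\}$ share the even element $2i+2$, and the wrap-around triples such as $\{2n-2,2n-1,0\}$ or $\{4n-2,4n-1,2n\}$ reuse endpoints from the ranges. I would check that even where two triples share a single common element, they never share a \emph{pair}, because each triple's even endpoints and its unique odd midpoint jointly pin down $i$, and distinct triples in the list have distinct index $i$. For the cyclic families $\mathcal{F}_1,\mathcal{F}_2$ I would confirm the wrap-around triple shares at most one element with each neighbor, using $n\geq 3$ to guarantee the ranges are long enough that no accidental coincidence of two elements occurs. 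Since each of $S_{4n},S'_{4n}$ has its three-element circuits drawn from a single such family (or a disjoint union of two of them on disjoint even-endpoint ranges), the per-family uniqueness immediately gives that at most one three-element circuit contains both $y$ and $z$.

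Once the parity case analysis and the boundary/wrap-around check are in place, the conclusion is immediate, so I expect the only real work to be the bookkeeping of the overlaps, which $n\geq 3$ makes routine.
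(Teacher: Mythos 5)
Your proposal is correct in substance; note that the paper itself offers no argument for this fact --- its ``proof'' is the single sentence that it can be easily checked --- so your parity-counting argument supplies content the paper omits. The reduction is right: since every $3$-subset of $E$ is a basis of $U_{3,4n}$ and every set of size at most two remains independent (any pair has $4n-2$ possible extensions and only a handful of forbidden triples through it), the $3$-element circuits of $S_{4n}$ (resp.\ $S'_{4n}$) are exactly the members of $\mathcal{F}_1\cup\mathcal{F}_2$ (resp.\ $\mathcal{F}_3\cup\mathcal{F}_4$), and the fact becomes the claim that distinct triples in the relevant family share at most one element. Two points in your sketch need care when you execute it. First, the wrap-around triples $\{2n-2,2n-1,0\}\in\mathcal{F}_1$ and $\{4n-2,4n-1,2n\}\in\mathcal{F}_2$ are \emph{not} arithmetic progressions in $\mathbb{Z}_{4n}$, so your rule that two even elements lie in a common triple only if they differ by $2$ fails for them; the clean formulation is that within each family the maps sending a triple to its unique odd element and to its unordered pair of even elements are both injective --- for the even pairs this is precisely the statement that the edges $\{0,2\},\{2,4\},\dots,\{2n-2,0\}$ of a cycle of length $n\geq 3$ are pairwise distinct, which is exactly where $n\geq 3$ enters (for $n=2$ the triples $\{0,1,2\}$ and $\{2,3,0\}$ share the pair $\{0,2\}$ and the fact is false, consistent with the paper's remark that $S_8$ is not a matroid). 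Second, your parenthetical that the two families making up each matroid sit on disjoint even-endpoint ranges is true for $\mathcal{F}_1,\mathcal{F}_2$ but not for $\mathcal{F}_3,\mathcal{F}_4$, which share the elements $0$ and $2n$; the cross-family check still succeeds because no triple of $\mathcal{F}_3$ contains both $0$ and $2n$, and likewise for $\mathcal{F}_4$ (indeed $\mathcal{F}_3\cup\mathcal{F}_4$ is a single loose cycle of length $2n\geq 6$). Both repairs fall under the boundary verification you explicitly promise, so the proof goes through.
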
 
\begin{proof} 
This fact can be easily checked. 
\end{proof} 

\begin{prop} 
The independent systems $S_{4n}$ and $S'_{4n}$ are matroids. 
\end{prop}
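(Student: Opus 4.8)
The plan is to verify the basis exchange axiom directly, which is the most efficient route here because both independence systems are presented through their families of maximal sets, and those maximal sets are all $3$-element subsets of $E=\ZZ_{4n}$. Recall that a nonempty family $\mathcal{B}$ of equicardinal subsets of a finite set is the basis family of a matroid exactly when it satisfies the exchange property: for all $B_1,B_2\in\mathcal{B}$ and every $x\in B_1\setminus B_2$ there exists $y\in B_2\setminus B_1$ with $(B_1\setminus\{x\})\cup\{y\}\in\mathcal{B}$. For either $S_{4n}$ or $S'_{4n}$ the family $\mathcal{B}$ is nonempty (only $2n$ of the $\binom{4n}{3}$ triples are deleted) and all its members have three elements, so only the exchange property remains. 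A preliminary observation I would record is that, in these systems, the deleted triples are precisely the $3$-element circuits: a deleted triple is dependent, while each of its pairs lies in an allowed triple and is therefore independent. Hence Fact \ref{uniqueness_big_circuit} may be read in the form I will actually use, namely that \emph{every pair $\{a,b\}\subseteq E$ is contained in at most one deleted triple}.

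With this in hand the verification is a short case analysis. Given bases $B_1,B_2$ and $x\in B_1\setminus B_2$, deleting $x$ leaves the pair $P:=B_1\setminus\{x\}$, and I must find $y\in B_2\setminus B_1$ with $P\cup\{y\}$ again a basis; note $P\cup\{y\}$ is automatically a $3$-set, and the number of available $y$ equals $|B_2\setminus B_1|=3-|B_1\cap B_2|$. If $|B_1\cap B_2|=2$ there is a single candidate $y$, and then $P=B_1\cap B_2$ forces $P\cup\{y\}=B_2$, which is a basis by assumption. If $|B_1\cap B_2|=1$ there are two candidates $y_1,y_2$, and since both $P\cup\{y_1\}$ and $P\cup\{y_2\}$ contain the pair $P$, Fact \ref{uniqueness_big_circuit} guarantees that at most one of them is deleted, so the other performs the exchange. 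If $|B_1\cap B_2|=0$ there are three candidates, at most one of which yields a deleted triple through $P$, leaving at least two valid exchanges. In all cases the exchange property holds.

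This argument is uniform across the two systems, since it uses only that every maximal set has size $3$ together with the ``at most one deleted triple per pair'' property. Consequently the real content is concentrated in Fact \ref{uniqueness_big_circuit}, and the hard part is getting its overlap bookkeeping right: each triple $\{2i,2i+1,2i+2\}$ carries a distinct odd element, so two distinct deleted triples can only coincide in even coordinates, and one checks that they meet in at most one such coordinate (consecutive triples share a single even endpoint, and triples from different families occupy disjoint coordinate ranges). Once that is confirmed, the only point requiring mild attention in the exchange step is the degenerate case $|B_1\cap B_2|=2$, where the unique candidate succeeds not by Fact \ref{uniqueness_big_circuit} but because it reconstitutes $B_2$ itself.
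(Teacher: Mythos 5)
Your proof is correct, but it runs along a different axiomatic track than the paper's. The paper verifies the independence augmentation axiom (I3) directly: it reduces to the case $|A_2|=2$, $|A_1|=3$, notes that if no $3$-circuit contains $A_2$ then any element of $A_1\setminus A_2$ augments, and otherwise uses Fact \ref{uniqueness_big_circuit} to pick an element of $A_1$ outside the unique circuit through $A_2$. You instead verify the basis exchange axiom for the family of maximal sets, with a case split on $|B_1\cap B_2|\in\{0,1,2\}$. Both arguments bottom out at exactly the same combinatorial fact --- each pair of elements lies in at most one deleted triple --- so the mathematical content is essentially identical; what differs is the packaging. Your route has the small advantage that it never has to discuss independent sets of size at most $2$ (everything happens among $3$-sets), at the cost of invoking the basis-axiom characterization and the (correctly noted) identification of an independence system with the matroid generated by its equicardinal maximal sets; the paper's route is marginally more self-contained, since (I3) is the axiom system set up in Section \ref{sec:Intro}.

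One caveat on your bookkeeping for Fact \ref{uniqueness_big_circuit}: the parenthetical claim that ``triples from different families occupy disjoint coordinate ranges'' is false for $S'_{4n}$, where $\mathcal{F}_3$ and $\mathcal{F}_4$ both contain triples through $0$ (namely $\{0,1,2\}$ and $\{4n-2,4n-1,0\}$) and through $2n$ (namely $\{2n-2,2n-1,2n\}$ and $\{2n,2n+1,2n+2\}$). The conclusion you need still holds, because $\mathcal{F}_3\cup\mathcal{F}_4$ is itself a loose cycle of length $2n$ in which consecutive triples --- including these cross-family adjacent ones --- share exactly one even endpoint; but the justification as written does not cover $S'_{4n}$. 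Since this uniqueness fact is precisely where the hypothesis $n\ge 3$ enters (for $n=2$ the triples $\{0,1,2\}$ and $\{2,3,0\}$ of $\mathcal{F}_1$ share two elements, which is why $S_8$ fails to be a matroid), it deserves the more careful statement; with that repair your argument is complete.
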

\begin{proof} 
Let $M$ be one of the independent systems $S_{4n}$ and $S'_{4n}$. 
Let $A_1$, $A_2$ be independent subsets of $M$ on $E$ such that $|A_2|< |A_1|\leq 3$. 
If $|A_2|\leq 1$, then there exists an element $x\in A_1\setminus A_2$ such that  
 $A_2\cup \{ x\}$ is independent because all sets of size at most two are independent 
in both $S_{4n}$ and $S'_{4n}$.  
Therefore, we can assume $|A_2|=2$ and $|A_1|=3$.  
If there does not exist a circuit of size $3$ which contains $A_2$, 
then $A_2\cup \{ x\}$ is independent for each element $x\in A_1\setminus A_2$. 
Therefore, we consider the case that $A_2$ is contained in a circuit of size $3$. 
Let us assume that $\{ z_1, z_2, z_3\}$ is a circuit, and that  $A_2=\{ z_1, z_2\}$. 
Because $A_1$ is independent, $A_1$ contains an element $x\not\in \{ z_1, z_2, z_3\}$. 
Then, $A_2\cup \{ x\}$ is independent from Fact \ref{uniqueness_big_circuit}. 
%
\end{proof} 

\begin{fact} 
If a subset $A\subset E$ is none of $\{ \{ 2n-2, 2n-1, 0\}, \{ 2n-2, 2n-1, 2n\}, \{ 4n-2, 4n-1, 0\} , \{ 4n-2, 4n-1, 2n\}
 \}$, 
then $\rho_{S_{4n}}(A)=\rho_{S_{4n}}(A+2n)=\rho_{S'_{4n}}(A)=\rho_{S'_{4n}}(A+2n)$ holds. 
\end{fact}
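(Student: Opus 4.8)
The plan is to reduce the claim to the behaviour of the rank functions on $3$-element sets, using that $S_{4n}$ and $S'_{4n}$ are both rank-$3$ matroids obtained from $U_{3,4n}$ by declaring a prescribed family of triples dependent. First I would record the shape of the rank function common to both. By construction every set of size at most $2$ is independent, so it has rank equal to its cardinality in either matroid; a set $A$ with $|A|=3$ has rank $2$ exactly when $A$ is one of the deleted triples and rank $3$ otherwise; and every $A$ with $|A|\ge 4$ has rank $3$. This last point is where Fact~\ref{uniqueness_big_circuit} enters: if some $A$ with $|A|\ge 4$ had rank $2$, then all of its $3$-element subsets would be circuits, and choosing two such subsets sharing a common pair would produce two distinct $3$-circuits through that pair, contradicting Fact~\ref{uniqueness_big_circuit}. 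Hence the rank of $A$ depends only on $|A|$ together with, when $|A|=3$, the single datum ``is $A$ a deleted triple.''

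Next I would exploit a translation symmetry. Since $\mathcal{F}_2=\mathcal{F}_1+2n$ and $\mathcal{F}_4=\mathcal{F}_3+2n$, and since $x\mapsto x+2n$ is an involution of $\ZZ_{4n}$, the full deleted families $\mathcal{F}_1\cup\mathcal{F}_2$ and $\mathcal{F}_3\cup\mathcal{F}_4$ are each invariant under translation by $2n$. Combined with the rank description above, this gives $\rho_{S_{4n}}(A)=\rho_{S_{4n}}(A+2n)$ and $\rho_{S'_{4n}}(A)=\rho_{S'_{4n}}(A+2n)$ for every $A$, so two of the four required equalities hold unconditionally.

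It then remains to compare $S_{4n}$ with $S'_{4n}$. Writing $G$ for the set of interior triples $\{2i,2i+1,2i+2\}$ with $0\le i\le n-2$ or $n\le i\le 2n-2$, one checks that $\mathcal{F}_1\cup\mathcal{F}_2=G\cup\{\,\{2n-2,2n-1,0\},\{4n-2,4n-1,2n\}\,\}$ while $\mathcal{F}_3\cup\mathcal{F}_4=G\cup\{\,\{2n-2,2n-1,2n\},\{4n-2,4n-1,0\}\,\}$, with the four extra triples all lying outside $G$ and pairwise distinct (here $n\ge 3$ and $4n\equiv 0$ are used). Thus the two deleted families agree except precisely on the four exceptional triples listed in the statement. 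Consequently, for any $A$ that is none of these four, the datum ``is $A$ a deleted triple'' agrees for $S_{4n}$ and $S'_{4n}$ (this being vacuous unless $|A|=3$), whence $\rho_{S_{4n}}(A)=\rho_{S'_{4n}}(A)$; together with the translation symmetry this yields all four equalities.

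I expect the only real obstacle to be the cyclic bookkeeping in the last step, namely carefully verifying that the interior triples of $\mathcal{F}_1,\mathcal{F}_3$ (respectively $\mathcal{F}_2,\mathcal{F}_4$) coincide and that none of the four exceptional triples collides with an interior triple or with one another. The matroid-theoretic content, namely the rank formula, is immediate once Fact~\ref{uniqueness_big_circuit} is invoked, and the symmetry argument is essentially formal.
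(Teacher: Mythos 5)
Your proof is correct and complete. For comparison: the paper states this Fact with no proof whatsoever (like the neighbouring Facts, it is treated as an easy verification), so your argument fills in a gap rather than paralleling an existing derivation. The three ingredients you use are exactly what is needed: (i) the explicit rank function of these rank-$3$ matroids --- $\rho(A)=|A|$ for $|A|\le 2$; for $|A|=3$, $\rho(A)=2$ precisely when $A$ is a deleted triple; and $\rho(A)=3$ for $|A|\ge 4$, where the last case correctly invokes Fact~\ref{uniqueness_big_circuit}, since a set of size at least $4$ and rank at most $2$ would contain two distinct $3$-circuits sharing a pair; (ii) the identities $\mathcal{F}_2=\mathcal{F}_1+2n$ and $\mathcal{F}_4=\mathcal{F}_3+2n$, which make $x\mapsto x+2n$ an automorphism of each of $S_{4n}$ and $S'_{4n}$ and hence give the two ``internal'' equalities for every subset $A$, exceptional or not; and (iii) the bookkeeping that $\mathcal{F}_1\cup\mathcal{F}_2$ and $\mathcal{F}_3\cup\mathcal{F}_4$ share the common family $G$ of interior triples and differ exactly in the four listed triples, which gives $\rho_{S_{4n}}(A)=\rho_{S'_{4n}}(A)$ off that exceptional set. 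Chaining $\rho_{S_{4n}}(A+2n)=\rho_{S_{4n}}(A)=\rho_{S'_{4n}}(A)=\rho_{S'_{4n}}(A+2n)$ then yields all four equalities, and note that this chain never requires $A+2n$ to avoid the exceptional set. Two minor remarks: your distinctness check does require $n\ge 3$ (e.g.\ to rule out collisions such as $2n-4=0$), which you correctly flag; and, although your argument never needs it, the set of four exceptional triples is itself invariant under $+2n$, which is why the statement is internally consistent with the symmetry you establish.
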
 
\begin{fact} 
For a subset $A\ni 0$ of $E$, $\rho_{S_{4n}}(A)=\rho_{S'_{4n}}(((A+2n)\setminus \{ 2n\} )\cup \{ 0\} )$ 
\end{fact}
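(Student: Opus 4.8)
The plan is to exploit that both $S_{4n}$ and $S'_{4n}$ have rank $3$, so that their rank functions are completely determined by cardinality together with the family of three-element circuits. First I would record the elementary formula for the rank of a subset $A$: one has $\rho(A)=|A|$ whenever $|A|\le 2$, and for $|A|\ge 3$ one has $\rho(A)=3$ unless every three-element subset of $A$ is a circuit, in which case $\rho(A)=2$. By Fact \ref{uniqueness_big_circuit} no two elements lie in two common three-circuits, so two distinct circuits meet in at most one element; consequently a set of size $4$ contains at most one circuit and hence a basis, giving $\rho(A)=3$ for every $A$ with $|A|\ge 4$. This already confines all nontrivial behaviour to sets of size exactly three.

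Write $\phi(A):=((A+2n)\setminus\{2n\})\cup\{0\}$ for $A\ni 0$, so that $\phi$ fixes $0$ and translates every other element of $A$ by $2n$ in $\mathbb{Z}_{4n}$. I would first verify that $\phi$ preserves cardinality on the inputs under consideration and then split according to $|A|$. For $|A|\le 2$ both sides of the asserted identity equal $|A|$, and for $|A|\ge 4$ both sides equal $3$ by the previous paragraph; in either range the equality $\rho_{S_{4n}}(A)=\rho_{S'_{4n}}(\phi(A))$ is automatic. Hence the whole statement reduces to the single claim that, for a three-element $A\ni 0$, the set $A$ is a circuit of $S_{4n}$ if and only if $\phi(A)$ is a circuit of $S'_{4n}$.

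To establish this I would work directly with the circuit families, using the translation relations $\mathcal{F}_2=\mathcal{F}_1+2n$ and $\mathcal{F}_4=\mathcal{F}_3+2n$, which make the circuit set of each matroid invariant under the shift by $2n$. The families $\mathcal{F}_1\cup\mathcal{F}_2$ and $\mathcal{F}_3\cup\mathcal{F}_4$ agree except on the four boundary triples
\[
\{2n-2,2n-1,0\},\quad \{2n-2,2n-1,2n\},\quad \{4n-2,4n-1,0\},\quad \{4n-2,4n-1,2n\}
\]
isolated in the preceding Fact. For a triple meeting neither $0$ nor $2n$, the map $\phi$ acts as the plain translation by $2n$, and the desired equivalence is then inherited from shift-invariance together with the agreement of the two families away from the boundary. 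The remaining triples are exactly those incident to the junction vertex $0$, and for these $\phi$ no longer acts as a translation; they must be examined individually.

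The hard part will be precisely this junction analysis. Because $S_{4n}$ closes each half of $\mathbb{Z}_{4n}$ into a loose cycle while $S'_{4n}$ threads the two halves into loose paths, the triples through $0$ (and, symmetrically, through $2n$) are exactly where the two matroids differ, so the equivalence cannot be read off from shift-invariance and must be checked by direct computation in $\mathbb{Z}_{4n}$. I would expect the crucial identity to be that $\phi$ interchanges the distinguished closing circuit $\{2n-2,2n-1,0\}$ of $S_{4n}$ with the distinguished closing circuit $\{4n-2,4n-1,0\}$ of $S'_{4n}$; confirming that every triple through the junction behaves consistently with the claimed rank identity, and pinning down exactly which $A$ the identity requires, is where the real work lies, while all triples avoiding the junction are controlled routinely by the preceding Fact.
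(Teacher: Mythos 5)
Your reduction to three-element sets is sound, and it is presumably the intended route (the paper states this Fact with no proof whatsoever, so there is no argument on the paper's side to compare against): both matroids have rank $3$, all pairs are independent, and Fact~\ref{uniqueness_big_circuit} forces distinct $3$-circuits to share at most one element, so every set of size at least $4$ contains a basis and has rank $3$. The problem is that you then defer the only substantive step --- the triples through $0$ --- as ``where the real work lies,'' and that step cannot be completed, because the statement as written is false. Take $A=\{0,1,2\}$, which contains $0$ and is a circuit of $S_{4n}$ (the member $i=0$ of $\mathcal{F}_1$), so $\rho_{S_{4n}}(A)=2$; its image is $\phi(A)=((A+2n)\setminus\{2n\})\cup\{0\}=\{0,2n+1,2n+2\}$, which belongs to neither $\mathcal{F}_3$ nor $\mathcal{F}_4$ and hence is a basis of $S'_{4n}$, giving $\rho_{S'_{4n}}(\phi(A))=3$. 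Symmetrically, $A=\{0,2n+1,2n+2\}$ gives $3$ on the left and $2$ on the right. Your cardinality check also fails whenever $2n\in A$: in that case $0\in A+2n$, so $|\phi(A)|=|A|-1$; for instance $A=\{0,2n\}$ yields $\phi(A)=\{0\}$ and ranks $2$ versus $1$.

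So the identity holds only for a restricted class of sets containing $0$ --- notably for the two junction triples, where $\phi$ carries $\{2n-2,2n-1,0\}$ (a circuit of $S_{4n}$) to $\{4n-2,4n-1,0\}$ (a circuit of $S'_{4n}$), and carries $\{4n-2,4n-1,0\}$ (independent in $S_{4n}$) to $\{2n-2,2n-1,0\}$ (independent in $S'_{4n}$) --- and that is evidently how it is meant to be used in the proof of Theorem~\ref{thm:existance_higher_genus}, where the cut parameter $l$ is chosen precisely so that the problematic configurations are excluded from $\mathcal{X}(\mathcal{A})$. Your closing remark that one must ``pin down exactly which $A$ the identity requires'' is exactly the right instinct, but in a proof that restriction is the whole content: as long as it is left open, nothing has been proved, and any attempt to establish the Fact for all $A\ni 0$ will collide with the counterexamples above.
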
 

\begin{thm} \label{thm:existance_higher_genus}
For any $g\in\NN$, there exists a pair of matroids $(M,M')$ such that 
$T^{(g)}(M)=T^{(g)}(M')$ and $M\not\simeq M'$. 

\end{thm}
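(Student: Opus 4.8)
The goal is to produce, for every $g$, a pair of non-isomorphic matroids that cannot be distinguished by the genus $g$ Whitney rank generating function. The natural candidates are the families $S_{4n}$ and $S'_{4n}$ introduced just above: these are both rank-$3$ matroids on $\mathbb{Z}_{4n}$ obtained from $U_{3,4n}$ by deleting a matching family of $3$-element bases, where the deleted families differ only in whether they ``close up'' into two circuital loose cycles ($S_{4n}$) or remain as two circuital loose paths ($S'_{4n}$). By the three Facts immediately preceding the theorem, the rank functions of $S_{4n}$ and $S'_{4n}$ agree on all subsets except those involving the four distinguished triples, and a translation by $2n$ matches $S_{4n}$-ranks with $S'_{4n}$-ranks away from $0$ and $2n$. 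First I would make precise the claim $S_{4n}\not\simeq S'_{4n}$: the loose cycles in $S_{4n}$ contribute a different local circuit-incidence pattern at the ``junction'' elements than the loose paths in $S'_{4n}$, so no bijection of ground sets can carry $\mathcal{C}(S_{4n})$ to $\mathcal{C}(S'_{4n})$ — this should be a short combinatorial count using Fact \ref{uniqueness_big_circuit}.

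\textbf{The genus bound.}
The heart of the argument is to show $T^{(g)}(S_{4n})=T^{(g)}(S'_{4n})$ whenever $g$ is small relative to $n$, and to exhibit a strictly larger genus at which they differ. Because $T^{(g)}$ is a sum over $g$-tuples $(A_1,\ldots,A_g)$ of subsets of $E$, and because each monomial records only the quantities $\rho(E)-\rho(A_\lambda)$, $|A_\lambda|-\rho(A_\lambda)$ together with the analogous data for all pairwise intersections $A_{\cap(\lambda)}$ and unions $A_{\cup(\lambda)}$, the plan is to build an explicit rank-preserving bijection $\Phi$ on the set of $g$-tuples that also preserves these intersection and union rank statistics. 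The three Facts supply exactly such a correspondence locally: away from the exceptional triples the identity works, and at the exceptional triples the translation $A\mapsto A+2n$ (with the $0\leftrightarrow 2n$ adjustment of the second Fact) transports the ranks correctly. The tuples on which $S_{4n}$ and $S'_{4n}$ genuinely disagree are precisely those that simultaneously ``see'' enough of the cycle-versus-path structure; I would argue that distinguishing a loose cycle from a loose path requires coordinating the rank data of at least $\lceil\sqrt{2}\,g\rceil$-many sets (matching the constant in Theorem \ref{thm:main} (2)), so for genus up to $g$ the bijection closes up and the polynomials coincide.

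\textbf{The main obstacle.}
The delicate point — and where I expect the real work to lie — is controlling the \emph{union} statistics simultaneously with the intersection statistics across a whole $g$-tuple. A term-by-term local substitution preserves each $A_\lambda$'s own rank data and each pairwise datum in isolation, but the genus $g$ monomial couples all $\binom{g}{2}$ pairs at once through the variables $x_{\cap(\lambda)},x_{\cup(\lambda)}$; so the bijection $\Phi$ must be a single global permutation of tuples under which every pairwise intersection and union rank is preserved \emph{consistently}. Verifying this consistency is where the constant $\sqrt{2}$ enters: roughly, a genus $g$ polynomial can only ``probe'' a number of pairwise relations that grows quadratically in the number of distinct sets involved, and the cycle/path distinction first becomes visible when that many probes exceed a threshold governed by the cycle length $n$, giving the separation at genus $\lceil\sqrt{2}\,g\rceil$ asserted in Theorem \ref{thm:main} (2). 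The cleanest route is probably to fix $n$ as a function of $g$ (large enough that the two loose cycles are too long for any $g$-tuple to encircle), define $\Phi$ explicitly via the translation-and-swap map, and then check invariance of all six families of exponents under $\Phi$ using the Facts; the separating monomial at the higher genus is then exhibited directly, in the spirit of the explicit witnessing term $(A,B)=(X_1,X_2)$ used in the genus $2$ proposition above.
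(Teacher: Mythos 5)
Your choice of matroids ($S_{4n}$ versus $S'_{4n}$), the non-isomorphism argument via the circuit structure, and the overall strategy---a tuple-dependent ground-set bijection built from translation by $2n$, with $n$ chosen large in terms of $g$---are exactly the paper's. But the step you yourself flag as ``the main obstacle'' is resolved incorrectly, and this is a genuine gap. Your prescription ``away from the exceptional triples the identity works, and at the exceptional triples apply $A\mapsto A+2n$'' is a per-set rule, and a per-set rule cannot work: if $A_i$ is left fixed while $A_j$ is translated, nothing controls the statistics of $A_i\cap A_j$ and $A_i\cup A_j$. What the paper actually does is construct, for each tuple $\mathcal{A}=(A_1,\ldots,A_g)$, a \emph{single} bijection $\phi_{\mathcal{A}}:E\to E$ which is a piecewise translation: it shifts one arc of each cycle by $2n$ and fixes the rest, with the cut points placed at positions where the family $\mathcal{X}(\mathcal{A})$ (the sets $A_i$ together with all pairwise unions and intersections) fails to contain the corresponding consecutive triple. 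The existence of such cut points is the crux, and it follows from nothing more than pigeonhole: $|\mathcal{X}(\mathcal{A})|\le g+2\binom{g}{2}=g^2$, so taking $n>g^2$ guarantees a missing triple on each cycle. All six exponent families are then checked against this one map $\phi_{\mathcal{A}}$, and the tuple $(\phi_{\mathcal{A}}(A_1),\ldots,\phi_{\mathcal{A}}(A_g))$ contributes the identical monomial to $R^{(g)}(S'_{4n})$.

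By contrast, the justification you offer for consistency---that ``distinguishing a loose cycle from a loose path requires coordinating the rank data of at least $\lceil\sqrt{2}\,g\rceil$-many sets''---is circular (it is essentially the statement to be proved) and misplaced: no $\sqrt{2}$ appears anywhere in the proof of this theorem. That constant belongs to the later quantitative results (Proposition \ref{prop:g}, Theorems \ref{thm:iota_inf} and \ref{thm:iota_sup}, and Proposition \ref{prop:triangle_square}), where the genus threshold for equality is roughly $\sqrt{2n}$ while separation is achieved at $\iota(C_n)\le 2\lceil\sqrt{n}\rceil$; the ratio of these two thresholds is where $\sqrt{2}$ comes from, and that analysis is needed only for Theorem \ref{thm:main} (2), not for the present statement. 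For the present statement you also do not need to exhibit any higher genus at which the polynomials differ---only equality at genus $g$ plus non-isomorphism---so the last third of your plan is superfluous here. Replace the $\sqrt{2}$ heuristic by the pigeonhole count $g^2<n$, and make the cut-point construction of $\phi_{\mathcal{A}}$ explicit, and your outline becomes the paper's proof.
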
 
\begin{proof} 
For a $g$-tuple $\mathcal{A}=(A_1, A_2, \ldots, A_g)\ (A_i\subset E)$, 
define a family $\mathcal{X}(A)$ as follows; 
\begin{align*} 
\mathcal{X}(\mathcal{A}):=\{ A_i\}_{i=1}^g \cup \{ A_i\cup A_j\}_{1\leq i<j\leq g} 
\cup \{ A_i\cap A_j\}_{1\leq i<j\leq g} . 
\end{align*}  
We fix a  positive integer $n$ such that $n>g^2=|\mathcal{X}(\mathcal{A})|$.


Let $l$ and $l'$  be the non-negative integer  defined as follows; 
$l:=0$ if neither $\{ 2n-2, 2n-1, 0\}$ nor $\{ 2n-2, 2n-1, 2n\}$ belongs to 
$\mathcal{X}(\mathcal{A})$, 
$l:=\min\{ i\mid \{ \{ 2(n-i), 2(n-i)-1, 2(n-i-1)\} \not\in \mathcal{X}(\mathcal{A})$ otherwise; 
$l':=0$ if neither $\{ 4n-2, 4n-1, 0\}$ nor $\{ 4n-2, 4n-1, 2n\}$ belongs to $\mathcal{X}(\mathcal{A})$, 
$l':=\min\{ i\mid \{ \{ 2(2n-i), 2(2n-i)-1, 2(2n-i-1)\} \not\in \mathcal{X}(\mathcal{A})$ otherwise. 
Here, we assume $l\geq l'$ because the other case can also be proved similarly. 
Moreover, the inequation  $n>g^2$ yields $n>l$. 

%


Here,  
we define 
a bijection on $E$ as follows: 
\begin{align*} 
\phi_{\mathcal{A}}(i)
=\begin{cases} 
k &(\textnormal{if } 0\leq k\leq 2(n-l)-1 \textnormal{ or } 2n\leq k\leq 2(2n-l)-1),\\  
k+2n & (\textnormal{if } 2(n-l-1)\leq k\leq 2n-1) \textnormal{ or } 2(2n-l-1)\leq k\leq 4n-1)\\ 
k-2n & (\textnormal{if } 2(2n-l-1)\leq k\leq 4n-1). 
\end{cases}
\end{align*}

Since the map $\phi_\mathcal{A}$ is bijective for any family $\cal{A}$, hence, for any set $R\subset E$, 
we have $|\phi_{\mathcal{A}}(R)|=|R|$. 
Furthermore, for the case that $R$ is none of the four sets $\{ 2n-2, 2n-1, 0\}$, $\{ 2n-2, 2n-1, 2n\}$, $\{ 4n-2, 4n-1, 0\}$, $\{ 4n-2, 4n-1, 2n\}$, the equation $\rho_{S^{\prime}_{4n}}(\phi_{\mathcal{A}}(R))=\rho_{S_{4n}}(R)$ clearly holds. 
For the case that $R$ is one of the four sets $\{ 2n-2, 2n-1, 0\}$, $\{ 2n-2, 2n-1, 2n\}$, $\{ 4n-2, 4n-1, 0\}$, $\{ 4n-2, 4n-1, 2n\}$, we have:
\begin{align*} 
\rho_{S_{4n}}(\{ 2n-2, 2n-1, 0\}) &= 2, & \rho_{S'_{4n}}(\{ 4n-2, 4n-1, 0\}) &= 2, \\ 
\rho_{S_{4n}}(\{ 4n-2, 4n-1, 2n\}) &= 2, & \rho_{S'_{4n}}(\{ 2n-2, 2n-1, 2n\}) &= 2, \\ 
\rho_{S_{4n}}(\{ 2n-2, 2n-1, 2n\}) &= 3, & \rho_{S'_{4n}}(\{ 4n-2, 4n-1, 2n\}) &=3, \\ 
\rho_{S_{4n}}(\{ 4n-2, 4n-1, 0\}) &= 3, & \rho_{S'_{4n}}(\{ 2n-2, 2n-1, 4n\}) &=3.
\end{align*}
Then, for any $i,j$, we have 
\begin{align*} 
\left\{
\begin{array}{l}
|A_i|=|\phi_{\mathcal{A}}(A_i)|, \\ 
\rho_{S_{4n}}(A_i)=\rho_{S'_{2n}}(\phi_{\mathcal{A}}(A_i)), \\ 
|A_i\cap A_j|=|\phi_{\mathcal{A}}(A_i)\cap \phi_{\mathcal{A}}(A_j)|, \\ 
\rho_{S_{4n}}(A_i\cap A_j)=\rho_{S'_{2n}}(\phi_{\mathcal{A}}(A_i)\cap \phi_{\mathcal{A}}(A_i)), \\ 
|A_i\cup A_j|=|\phi_{\mathcal{A}}(A_i)\cup \phi_{\mathcal{A}}(A_j)|, \\ 
\rho_{S_{4n}}(A_i\cup A_j)=\rho_{S'_{2n}}(\phi_{\mathcal{A}}(A_i)\cup \phi_{\mathcal{A}}(A_i)). 
\end{array}
\right.
\end{align*} 


Therefore, we obtain the same monomial in $R^{(g)}(M)$ and $R^{(g)}(M')$. 
\end{proof}  

Suppose that, for any $g$-tuple $\mathcal{A}$, the family $\mathcal{X}(\mathcal{A})$ contains neither a circuital loose cycle nor a loose path of length $n$. 
Then, from Theorem \ref{thm:existance_higher_genus}, 
we have that $T^{(g)}(S_{4n}) = T^{(g)}(S'_{4n})$. 
In other words, if $T^{(g)}(S_{4n}) \neq T^{(g)}(S'_{4n})$ 
holds for some positive integer $g$, then we have a $g$-tuple $\mathcal{A}$ such that the family $\mathcal{X}(\mathcal{A})$ contains either a circuital loose cycle or a loose path of length $n$. 

Using the above observation with some detailed calculations, 
we obtain a considerably better lower bound value as follows:


\begin{prop} \label{prop:g}
If $g \leq \left\lceil \frac{5+ \sqrt{8n-15}}{2}\right\rceil$
then $T^{(g)}(S_{4n}) = T^{(g)}(S'_{4n})$.
\end{prop}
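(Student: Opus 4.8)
The plan is to turn the proposition into a single extremal estimate on how many consecutive three-element circuits of a ``loose'' structure can be manufactured from $g$ sets by taking pairwise unions and intersections. By the observation immediately preceding the proposition (which rests on Theorem~\ref{thm:existance_higher_genus}), if $T^{(g)}(S_{4n})\neq T^{(g)}(S'_{4n})$ then there is a $g$-tuple $\mathcal{A}=(A_1,\dots,A_g)$ whose family $\mathcal{X}(\mathcal{A})$ contains every three-element circuit of one of the circuital loose cycles $\mathcal{F}_1,\mathcal{F}_2$ or loose paths $\mathcal{F}_3,\mathcal{F}_4$, all of which have $n$ triangles. Hence it suffices to prove the contrapositive: writing the triangles of a loose path as $T_i=\{2i,2i+1,2i+2\}$ ($0\le i\le n-1$), if $\mathcal{X}(\mathcal{A})$ contains all $n$ of them then $g$ must be large. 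Concretely, I would establish the sharp bound that the number of triangles of such a path lying in $\mathcal{X}(\mathcal{A})$ is at most $\binom{g-3}{2}+2$.

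The counting starts from the fact that each triangle $T\in\mathcal{X}(\mathcal{A})$ is realized as a singleton $A_k$, an intersection $A_k\cap A_l$, or a union $A_k\cup A_l$. Since $A_k\cap A_l\subseteq A_k\cup A_l$ and two three-element sets with this containment must coincide, each unordered pair $\{k,l\}$ produces at most one triangle; together with the $g$ singletons this gives only the crude bound $g+\binom{g}{2}$, which exceeds the target by $4(g-2)$. The substance is to recover this deficit from the \emph{linear} structure of the path, and for this I would exploit the two vertex types separately. The odd vertex $2i+1$ is private to $T_i$, so setting $O_k:=A_k\cap\{1,3,\dots,2n-1\}$, a clean intersection $A_k\cap A_l=T_i$ forces $O_k\cap O_l=\{2i+1\}$; thus the ``productive'' pairs form a partial linear space on the odd vertices, and no odd vertex may lie in three sets used productively. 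At the same time the even junction vertices $2i,2i+2$, which are shared by neighbouring triangles, impose an opposing non-adjacency constraint: if $T_i=A_k\cap A_l$ then $T_{i-1}$ and $T_{i+1}$ cannot each be contained in both $A_k$ and $A_l$, for otherwise $2i-1$ or $2i+3$ would survive into $A_k\cap A_l$. By Fact~\ref{uniqueness_big_circuit} no pair of vertices lies in two triangles, which keeps this bookkeeping unambiguous.

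Balancing these two opposing requirements along the chain is the core of the argument and the step I expect to be the main obstacle. Intuitively, to realize $\binom{g}{2}$ distinct clean intersections one would want the $O_k$ to form the ``edges-incident-to-a-vertex'' design of $K_g$ (pairwise meeting in a single, distinct point), but the non-adjacency constraint from the shared even vertices prevents this design from being laid along a path without collisions; quantifying exactly how many pairs are thereby blocked, together with the two boundary effects at the ends of the path, is what pins the maximum at $\binom{g-3}{2}+2$ rather than the naive $g+\binom{g}{2}$. The loose \emph{cycle} case ($\mathcal{F}_1,\mathcal{F}_2$) I would run in parallel; lacking endpoints it is no easier to capture than the path, so the path estimate is the binding one and yields the stated threshold.

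Finally I would convert the estimate. If all $n$ triangles are captured then
\begin{equation*}
n\le \binom{g-3}{2}+2=\frac{g^2-7g+16}{2},
\end{equation*}
so $g^2-7g+(16-2n)\ge 0$ and hence $g\ge \frac{7+\sqrt{8n-15}}{2}$. For integer $g$ this is equivalent to $g>\lceil\frac{5+\sqrt{8n-15}}{2}\rceil$, since $\frac{7+\sqrt{8n-15}}{2}=\frac{5+\sqrt{8n-15}}{2}+1$. Therefore $g\le\lceil\frac{5+\sqrt{8n-15}}{2}\rceil$ forces $\mathcal{X}(\mathcal{A})$ to miss at least one triangle of every loose cycle and loose path, and the reduction gives $T^{(g)}(S_{4n})=T^{(g)}(S'_{4n})$.
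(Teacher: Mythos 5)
Your overall strategy is the same as the paper's: take the contrapositive via the observation following Theorem~\ref{thm:existance_higher_genus}, reduce to an extremal bound on how many triangles of a circuital loose cycle or path can occur in $\mathcal{X}(\mathcal{A})$, and then solve a quadratic inequality in $g$. The difference is in how that extremal bound is obtained, and this is where your proposal has a genuine gap: the bound $\binom{g-3}{2}+2$, on which your entire final computation rests, is never proved. You say yourself that balancing the odd-vertex and even-junction constraints ``is the core of the argument and the step I expect to be the main obstacle,'' and the sketch you give (a partial linear space condition on the odd parts $O_k$, plus a non-adjacency condition at shared even vertices) does not quantify the claimed deficit of $4(g-2)$ below the crude bound $g+\binom{g}{2}$. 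Moreover, your refined counting only treats triangles realized as intersections $A_k\cap A_l$; triangles realized as unions $A_k\cup A_l$ or as the sets $A_k$ themselves reappear at this stage and are not absorbed by the odd-vertex argument. The paper handles exactly these cases by two separate lemmata: a reduction lemma that eliminates unions (replacing $\{A_i,A_j\}$ by $A_i\cup A_j$, at the cost of one set), and a normal-form lemma showing the loose cycle or path can be realized inside $\{A'_i\cap A'_j \mid 3\le i<j\le g\}\cup\{A'_1,A'_2\}$, which yields the inequality $\binom{g-2}{2}+2\ge n$ that the paper then solves.

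Two further remarks on the discrepancy between the two bounds. First, your bound $\binom{g-3}{2}+2$ is strictly sharper than the paper's $\binom{g-2}{2}+2$ (smaller by $g-3$), and nothing in the paper or in your sketch certifies it; it may well be false, since the only constructive evidence available (the $2$-fold intersecting covers of Theorem~\ref{thm:iota_sup}, giving roughly $g^2/4$ triangles from $g$ sets) leaves a large range between construction and counting bound. Second, it is true that if your sharper bound were established, the arithmetic at the end of your proposal would deliver Proposition~\ref{prop:g} cleanly even at the boundary value $g=\left\lceil\frac{5+\sqrt{8n-15}}{2}\right\rceil$, where the paper's inequality $\binom{g-2}{2}+2\ge n$ can hold with equality; but as written this is an aspiration, not a proof, so the proposal is incomplete at its central step.
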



In order to prove the above, we use the following two lemmata.

\begin{lem} 
Let $g$ be a positive integer such that 
$T^{(g)}(S_{4n})\neq T^{(g)}(S'_{4n})$. 
And let $\mathcal{A}=(A_1, A_2, \ldots, A_g)\ (A_i\subset E)$
be a $g$-tuple such that $\mathcal{X}(\mathcal{A})$ contains either a circuital loose cycle or 
loose path of length $n$. 
Suppose that there exist two distinct indices $i\neq j$ such that  $A_i\cup A_j$ is a circuit of size $3$ 
which belongs to  a circuital loose cycle or 
loose path of length $n$. 
Then we have another family 
$\mathcal{A}'=(A'_1, A'_2, \ldots, A'_{g-1})\ (A'_i\subset E)$ such that $\mathcal{X}(\mathcal{A}')$ contains either a circuital loose cycle or 
loose path of length $n$. 
\end{lem}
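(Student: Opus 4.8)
The plan is to lower the genus by one through a \emph{merge}. Writing $C_m:=A_i\cup A_j$ for the size-$3$ loose-structure circuit supplied by the hypothesis, I would let $\mathcal{A}'$ be the $(g-1)$-tuple obtained from $\mathcal{A}$ by deleting the two entries $A_i,A_j$ and inserting the single entry $C_m$. Since $C_m$ is then itself a member of $\mathcal{A}'$, it lies in $\mathcal{X}(\mathcal{A}')$, so the whole problem collapses to showing that the remaining $n-1$ circuits of the loose cycle (or path) are still produced by $\mathcal{X}(\mathcal{A}')$. Throughout I would exploit the two defining features of the families $\mathcal{F}_1,\dots,\mathcal{F}_4$: each loose-structure circuit $\{2k,2k+1,2k+2\}$ owns a \emph{private} odd coordinate $2k+1$ lying in no other circuit of the same structure, and two circuits of one structure meet either in a single even coordinate (consecutive ones) or not at all.

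Next I would analyse how a circuit $C_l\neq C_m$ of the structure could possibly depend on the deleted sets $A_i,A_j$, both of which satisfy $A_i,A_j\subseteq C_m$. If $C_l$ is realised as an intersection $A_p\cap A_q$ with $p$ or $q$ in $\{i,j\}$, then $C_l\subseteq A_i\subseteq C_m$ forces $C_l=C_m$, which is impossible; so intersections are never disturbed. If $C_l$ is realised as a union $A_p\cup A_q$ with, say, $A_p=A_i$, then $A_i\subseteq C_l\cap C_m$ together with the private-odd property forces $C_l$ to be a neighbour of $C_m$ and $A_i$ to be exactly the single even coordinate that $C_m$ shares with $C_l$. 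Hence the only circuit that the merge can possibly destroy is a neighbour $C'$ of $C_m$, and only when one of $A_i,A_j$ equals the shared even singleton, say $\{2m+2\}$.

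I would then dispose of this last danger in two steps. If the threatened neighbour $C'$ possesses \emph{any} second realisation in $\mathcal{X}(\mathcal{A})$ --- as a singleton, as a different union, or as an intersection --- then that realisation survives the merge, since it cannot involve $A_i$ (which carries $C_m$'s private odd coordinate) and does not require the lost copy of $\{2m+2\}$; so $\mathcal{A}'$ already works. The genuinely stuck configuration is the two-circuit gadget in which $C_m=\{2m,2m+1\}\cup\{2m+2\}$ and $C'=\{2m+2\}\cup\{2m+3,2m+4\}$ are each realised \emph{only} through the common singleton $\{2m+2\}$. Here I would switch to a different reduction: delete the three entries $\{2m,2m+1\}$, $\{2m+2\}$, $\{2m+3,2m+4\}$ and insert the two singletons $C_m$ and $C'$. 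This again lowers the length from $g$ to $g-1$, makes $C_m$ and $C'$ members of $\mathcal{X}(\mathcal{A}')$, and --- because each deleted set carries either a private odd coordinate (hence is relevant only to $C_m$ or to $C'$) or the even coordinate $2m+2$ shared by no third circuit --- leaves every other circuit of the structure untouched.

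I expect the deadlock analysis to be the main obstacle. The subtle point is that ``reuse'' of a shared even singleton does not by itself kill a circuit, since a circuit may be produced by $\mathcal{X}(\mathcal{A})$ in several ways; one must isolate the unique rigid configuration and clear it by the three-for-two swap above. A pleasant feature is that the whole argument is purely local --- it uses only that consecutive circuits of a loose cycle or path overlap in one even vertex while each owns a private odd vertex --- so it applies verbatim to cycles and to paths, and the loose structure recovered in $\mathcal{A}'$ may be taken to be the same one, up to the harmless cycle/path ambiguity at the distinguished last circuit.
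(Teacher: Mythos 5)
Your proposal is correct, and its core reduction is the same as the paper's: replace the pair $A_i,A_j$ by the single set $A_i\cup A_j$ (when $A_i\subseteq A_j$ this degenerates into deleting $A_i$, which is the paper's first case). But your proof is genuinely more complete than the paper's own. The paper justifies the merge in one line, by observing that none of $A_i$, $A_j$, $A_i\cap A_k$, $A_j\cap A_k$ is a circuit of size $3$; this only rules out losing a circuit that was realised as a member or as an intersection, and says nothing about a circuit realised as a union $A_i\cup A_k$ or $A_j\cup A_k$ with $k\notin\{i,j\}$ --- exactly the danger you isolate. The danger is real: for $n=3$, the tuple $\mathcal{A}=(\{0,1\},\{2\},\{3,4\},\{4,5,0\})$ satisfies $\mathcal{F}_1=\{\{0,1,2\},\{2,3,4\},\{4,5,0\}\}\subseteq\mathcal{X}(\mathcal{A})$, yet the paper's merge of $A_1,A_2$ yields $(\{0,1,2\},\{3,4\},\{4,5,0\})$, whose $\mathcal{X}$ no longer contains $\{2,3,4\}$ (nor any other circuital loose cycle or path), so the paper's construction by itself does not establish the conclusion. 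Your analysis closes this hole: using that intersections with $A_i$ or $A_j$ are too small and that any entry containing the private odd coordinate of $C_m$ cannot occur in a realisation of another circuit, you show the only endangered circuit is a neighbour $C'$ of $C_m$ realised through a singleton entry $\{e\}$ with $e$ the shared even coordinate; and in that stuck configuration your three-for-two swap (delete $C_m\setminus\{e\}$, $\{e\}$, $C'\setminus\{e\}$; insert $C_m$, $C'$) still lowers the number of entries by one and, as you verify, disturbs no third circuit. One wrinkle to fix: in the nested case $\{e\}\subset C_m\in\mathcal{A}$ the gadget entry $C_m\setminus\{e\}$ need not exist, so there the swap should read ``delete $\{e\}$ and $C'\setminus\{e\}$, insert $C'$'' (equivalently, run your merge on the pair $(\{e\},C'\setminus\{e\})$, whose union is the circuit $C'$ and which threatens nothing because $C_m$ stays a member). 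With that adjustment your argument proves the lemma in full --- indeed it proves what the paper's proof only asserts.
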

\begin{proof}
Let $A_i\cup A_j (i\neq j)$ be a circuit of size $3$ which belongs to a circuital loose cycle or loose path of length $n$. 
If $A_i\subset A_j$, then we have 
$\mathcal{A}':=\mathcal{A}\setminus \{ A_i\}$.  
Otherwise, we have 
$\mathcal{A}':=(\mathcal{A}\setminus \{ A_i, A_j\})\cup \{ A_i\cup A_j\}$ 
because none of $A_i$, $A_j$, $A_i\cap A_k$, and
$A_j\cap A_k$ $k\not\in \{ i,j\}$ is a circuit of size $3$. 
\end{proof} 

\begin{lem} 
Let $n$ be a positive integer at least $4$ and 
let $g$ be a positive integer such that 
$T^{(g)}(S_{4n})\neq T^{(g)}(S'_{4n})$. 
And let $\mathcal{A}=(A_1, A_2, \ldots, A_g)$ $(A_i\subset E)$
be a $g$-tuple such that $\mathcal{X}(\mathcal{A})$ contains either a circuital loose cycle or 
loose path of length $n$. 
Then, there exist $\{A'_i \subset E \}_{i=1}^g$ such that 
we can construct either a circuital loose cycle or path of length $n$ by using the set of hyper-edges 
$\{A'_i\cap A'_j\mid 3 \leq i < j \leq g \} \cup \{A'_1, A'_2\}$.  
\end{lem}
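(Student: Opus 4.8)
The plan is to treat this as a \emph{normal-form} result: I would take the given $g$-tuple $\mathcal{A}$ and successively simplify it until every triangle (every $3$-element circuit) of the loose cycle or path it carries is displayed only through the reserved slots $\{A'_1,A'_2\}\cup\{A'_i\cap A'_j:3\le i<j\le g\}$. The starting point is the classification forced by the hypothesis: since $\mathcal{X}(\mathcal{A})$ contains the loose cycle/path, each of its $n$ triangles $T$ lies in $\{A_i\}\cup\{A_i\cap A_j\}\cup\{A_i\cup A_j\}$, so $T$ is realized in exactly one of three ways --- as a single member, a pairwise intersection, or a pairwise union of the tuple. The whole argument is about compressing an arbitrary such realization into the much thinner canonical form, and the point of the hypothesis $T^{(g)}(S_{4n})\neq T^{(g)}(S'_{4n})$ is that it guarantees such an $\mathcal{A}$ actually exists.

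First I would eliminate the union-realizations. Whenever a triangle of the loose cycle/path equals $A_i\cup A_j$ for distinct $i,j$, the preceding lemma replaces the pair $\{A_i,A_j\}$ (either by $A_i\cup A_j$, or by deleting $A_i$ when $A_i\subseteq A_j$) and produces a strictly shorter tuple still carrying a length-$n$ loose cycle or path. After finitely many such steps no triangle is a union; if the length has dropped below $g$ I would pad back up with arbitrary subsets of $\ZZ_{4n}$, which only enlarges $\mathcal{X}$ and so cannot destroy any triangle already realized. At this stage every triangle is realized either as a single set $A_i$ or as an intersection $A_i\cap A_j$.

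Next I would drive the number of single-set realizations down to at most two. The key reduction is that a single-set triangle $A_k=T$ that happens to be contained in some other member $A_j$ is \emph{automatically} the intersection $A_k\cap A_j=T$, and may therefore be relabelled to sit among the indices $\ge 3$; only triangles maximal among the $\{A_i\}$ resist this, and I would argue that the loose (single-vertex-overlap) structure forces all but two of them to be relocatable. The two genuinely stubborn triangles are installed as $A'_1$ and $A'_2$, and all remaining triangles are encoded as intersections by viewing $\{3,\dots,g\}$ as the vertices of a graph whose edges are labelled by the triangles they must realize: I set each $A'_i$ $(i\ge 3)$ to be the union of the triangles labelling the edges at $i$, so that $A'_i\cap A'_j$ recovers the triangle on the edge $\{i,j\}$.

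The hard part will be guaranteeing that these pairwise intersections are \emph{exactly} the intended triangles and nothing larger, together with justifying that two reserved single sets really suffice. Because consecutive triangles of a loose cycle/path share one (even) vertex, two edges meeting at a common index could let that shared vertex leak into an unintended intersection; controlling this amounts to choosing the labelling so that any two edges sharing an index carry triangles that are non-consecutive, hence disjoint --- a conflict-free assignment that I expect to be feasible precisely because overlaps are single vertices and $n\ge 4$. Verifying the resulting rank values through the Facts on $\rho_{S_{4n}}$ and $\rho_{S'_{4n}}$ then confirms that the reorganized $\{A'_i\}_{i=1}^{g}$ still witnesses a genuine loose cycle or path. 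This normal form is exactly what the subsequent counting needs, since the reserved slots number $\binom{g-2}{2}+2$ and must accommodate all $n$ triangles, yielding the bound of Proposition \ref{prop:g}.
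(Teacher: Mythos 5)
Your opening moves match the paper's: you first invoke the preceding lemma to absorb all union-realizations (shrinking the tuple and padding back up, which is harmless since adding sets only enlarges $\mathcal{X}$), and your central device --- realizing each triangle as the intersection of two sets, each of which is a union of triangles incident to a common vertex of an auxiliary graph --- is exactly the trick the paper uses (in Case 1 it takes $A'_i := A_i \cup A_{i+1}$ cyclically, so that $A'_{i-1}\cap A'_i$ recovers the $i$-th triangle, using that non-adjacent triangles are disjoint when $n \geq 4$). However, your execution has a genuine gap. You discard the original realization entirely and rebuild it from scratch as an edge-labelling of a graph on the vertex set $\{3,\dots,g\}$; for this you must show that all $n$ triangles (including those the original tuple realized as intersections, possibly using all $\binom{g}{2}$ pairs) admit a conflict-free labelling by edges of a graph on only $g-2$ vertices. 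That feasibility claim is precisely the content of the lemma --- it is what makes the count $\binom{g-2}{2}+2 \geq n$ meaningful --- and you assert it ("a conflict-free assignment that I expect to be feasible") rather than prove it. The paper avoids this entirely by converting \emph{in place}: each maximal run of consecutively single-realized triangles is replaced one set for one set ($A'_i := A_i \cup A_{i+1}$ inside the run, flanking intersection-realizations kept), and isolated disjoint singles are paired cyclically ($B'_j := B_j \cup B_{j+1}$); the tuple size therefore never exceeds $g$ and no global re-encoding is ever needed.

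Two further steps would fail as written. First, your leak-avoidance condition --- that any two edges sharing an index carry non-consecutive, hence disjoint, triangles --- is neither sufficient nor compatible with the working construction: the dangerous cross-terms come from edges $e \ni i$ and $f \ni j$ that share \emph{no} index, whose labels may share a vertex $v \notin T_{\{i,j\}}$, making $A'_i \cap A'_j$ strictly larger than the intended triangle and hence destroying that realization; meanwhile the paper's own construction deliberately puts \emph{consecutive} triangles at each vertex and secures exactness from the disjointness of triangles at distance two ($n \geq 4$), so your condition would even forbid the construction that works. Second, your intermediate claim that the loose structure "forces all but two of the maximal single-set triangles to be relocatable" via containment in other members is unsupported, and containment is the wrong mechanism: the paper never relocates a single into an existing set; it redefines the single as a union of itself with a neighbouring triangle. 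To repair your argument you would essentially have to abandon the from-scratch graph labelling and adopt the paper's local, cardinality-preserving conversion.
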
 

\begin{proof} 
This proof is divided into several subcases as follows: 

\noindent
(Case 1) 
If $\mathcal{F}_1$ does not contain $A_i\cap A_j$ and 
$\{ A_i\}_{i=1}^g=\mathcal{F}$, 
then renumbering of the indices of $\{A_i\}$ we assume that 
$A_1=\{ 0, 1,2\}$, $A_2=\{ 2,3,4\}$, $\ldots$, $A_{n-1}=\{ 2n-4, 2n-3, 2n-2\}$, $A_n=\{ 2n-2, 2n-1, 0\}$. 


Then setting $A'_i:=A_i \cup A_{i+1}=\{ 2i, 2i+1, 2i+2, 2i+3,2i+4\}$ for each $i\in \mathbb{Z}_{n}$, 
we obtain $A'_{i-1}\cap A'_i=\{ 2i, 2i+1, 2i+2\}$. 





\vspace{10pt}

Thus, from now on, we assume that there exists a pair of indices $(i,j) \/ (i\neq j)$ such that $A_i \cap A_j \in \mathcal{F}_1$.

\vspace{10pt}

\noindent
(Case 2) 
Let $l\geq 4$ and let us assume without loss of 
generality that  
$A_1\cap A_2=\{ 0,1,2\}$, $A_3=\{ 2,3,4\}$, 
$A_4=\{ 4,5,6\}$, \ldots, $A_{l}=\{ 2(l-2), 2l-3, 2l-2\}$, 
$A_{l+1}\cap A_{l+2}=\{ 2l-2, 2l-1, 2l\}$. 
Then we redefine that $A_i := A_i\setminus 
\{ z\mid 3\leq z\leq 2l-3\}$ 
for $i\in \{ 1,2,l+1,l+2\}$. 

\noindent
(Subcase 2-1) Let us assume that $l-1=n$, i.e., $A_{l+1}\cap A_{l+2}=\{ 0,1,2\}$. Let us define that 
$A'_1:=\{ z\mid 0\leq z\leq 2n-1\}$ and $A'_i:=A_i$ for $2\leq i\leq l$. 
Then, the family $\mathcal{A}':=\{ A'_i \mid 1\leq i\leq l\}$ satisfies
$\mathcal{F}_1\subset \{ A'_i \cap A'_1\mid i\neq 1\}$. 
\noindent
(Subcase 2-2) 
Let us assume that $A_2\not\in \{ A_{l+1}, A_{l+2}\}$. 
Let us define that 
$A'_1:=A_1$, $A'_i:=A_i \cup A_{i+1}=\{ 2(i-2), 2i-3,2i-2, 2i-1, 2i\}$ for $2\leq i\leq l-1$, 
and $A'_i:=A_i$ for $i\in \{ l, l+1, l+2\}$. 
Then, the family $\mathcal{A}':=\{ A'_i \mid 1\leq i\leq l\}$ satisfies
$\{ \{ 2i, 2i+1, 2i+2\} \mid 0\leq i\leq l-1\}
\subset \{ A'_i \cap A'_j\mid i\neq j\}$. 


\vspace{10pt}

\noindent
(Case 3) 
Let $l$ be an integer at least 3, and we assume that 
$\mathcal{F}_1\subset \{ A_i\cap A_j\mid 1\leq i<j\leq g-l\} 
\cup \{ B_i\mid g-l+1\leq i\leq g\} $ 
and $B_i\cap B_j=\emptyset$ for each $i\neq j$. 
Let us define that $A'_i:=A_i$ for $1\leq \forall i \leq g-l$ and $B'_j:=B_j\cup B_{j+1} \pmod{l}$. 
Then, $B_i\cap B_j\neq \emptyset$ if and only if $i-j\neq \pm 1$. 
Then, the family 
$\mathcal{A'}=\{ A'_i \}_{i=1}^{g-l} \cup \{ B'_j\}_{j= g-l+1}^{g}$ satisfies 
$\mathcal{F}_1\subset \{ A'_i\cap A'_j (i \neq j)\} \cup \{ B'_i\cap B'_j (i \neq j)\}$. 

From the proof so far, 
there exist a family $\{ A_i\}_{i=1}^g$ and an integer $l \in \{0,1,2\}$ such that 
$\mathcal{F}_1\subset  \{ A_i\cap A_j\mid 1\leq i<j\leq g-l\} \cup \{ A_i \mid g-l\leq i\leq g\}$. 

\end{proof} 

\begin{proof} [Proof of Proposition \ref{prop:g}]
Solving the following inequality: $\binom{g-2}{2} +2\geq n$, 
we obtain the desired bound. 
\end{proof} 

Theorem \ref{genus_base_number} says that
$T^{(g)}(S_{4n})\not=T^{(g)}(S_{4n})$ for
 for $g\geq \binom{4n}{3}-2n$. 
 However, this upper bound is too large. 

Here, we introduce an invariant of finite simple graphs. 
\begin{df} 
Let $G=(V(G), E(G))$ be a finite simple graph. 
We say that a family $\mathcal{H}=\{ H_1, \ldots, H_s\}$ of subsets of $V$ is a \textit{$k$-fold intersecting cover} if, for each edge $\{u,v\} \in E(G)$, there exists a $k$-subset $X_{\{u,v \}}$ of the index set $[1,s]$ such that $\{ u, v\}=\cap_{i \in X_{\{u,v\}}} H_i $. 
We say that a finite simple $G$ is \textit{$k$-fold intersecting coverable} by $s$ sets if there exists a family $\mathcal{H}=\{ H_1, H_2, \ldots, H_s\}$ of subsets of $V$ such that $\mathcal{H}$ is a $k$-fold intersecting cover of $G$. 
Let $\iota_k(G)$ be the minimum number $s$ such that $G$ is $k$-fold intersecting coverable by $s$ sets. 
We call this number $\iota_k(G)$ the $k$-fold intersecting cover number of $G$. For simplicity, only when $k=2$, we use the symbol $\iota(G)$ instead of $\iota_2(G)$.
Clearly, $G$ has a $2$-fold intersecting cover with $|E(G)|+1$ sets, those are the vertex set and all edges. 
\end{df} 


Let $C_n$ denote a cycle graph whose edge set is 
$\{\{i,i+1\} \mid i\in\mathbb{Z}_n\}$, 
and $P_{n+1}$ denote a path graph 
whose edge set is $\{\{i,i+1\} \mid 0 \leq i \leq n \}$.
\begin{thm} 
The equation $\iota(C_n)=\iota(P_{n+1})$ holds. 
\end{thm}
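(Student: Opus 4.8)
The plan is to prove the two inequalities $\iota(P_{n+1})\le \iota(C_n)$ and $\iota(C_n)\le \iota(P_{n+1})$ separately, after first recasting the invariant in a convenient dual form. For a family $\mathcal{H}=\{H_1,\dots,H_s\}$ and a vertex $v$, write $S_v:=\{i:v\in H_i\}\subseteq[s]$ for the set of indices of members containing $v$. Then a two-element set $\{i,j\}$ realizes an edge $e=\{u,v\}$ (i.e. $H_i\cap H_j=\{u,v\}$) exactly when $\{i,j\}\subseteq S_u\cap S_v$ while $\{i,j\}\not\subseteq S_w$ for every other vertex $w$; I call such a pair a \emph{private pair} of $e$. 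Thus $\iota(G)$ is the least $s$ for which one can assign labels $S_v\subseteq[s]$ so that every edge has a private pair. Two elementary facts will be used throughout: distinct edges use distinct private pairs (the pair $\{i,j\}$ determines the intersection $H_i\cap H_j$, hence the edge), and since private pairs have size two, one private pair is contained in another only if the two are equal.

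First I would treat the easy direction $\iota(P_{n+1})\le\iota(C_n)$ by \emph{cutting} an optimal cover of $C_n$. Fix labels $\{S_v\}_{v\in\mathbb{Z}_n}$ realizing $\iota(C_n)=s$, and let $p,q$ be private pairs of the two edges $\{0,1\}$ and $\{n-1,0\}$ meeting at vertex $0$. I build a cover of $P_{n+1}$ on $\{0,1,\dots,n\}$ by keeping $S_i$ for $1\le i\le n-1$, shrinking vertex $0$ to $S'_0:=p$, and introducing the new endpoint with $S'_n:=q$. Because we \emph{shrink} rather than enlarge labels at the cut, every old privacy condition survives; the only new containments to check all compare distinct two-element private pairs ($p$ against $q$, and each interior pair $p_i$ against $p$ and against $q$), and none of these containments can occur, precisely because distinct private pairs are distinct two-element sets. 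This gives $\iota(P_{n+1})\le s$.

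The reverse inequality $\iota(C_n)\le\iota(P_{n+1})$ is where the real work lies, and it must be attacked by \emph{identifying} the two degree-one endpoints $0$ and $n$ of an optimal path cover into a single cyclic vertex $\bar{0}$. Here one is forced to set $S_{\bar{0}}:=p\cup q$, the union of the private pairs of the two end-edges, since $\bar{0}$ must support both of them; normalizing the endpoints to $S'_0=p$ and $S'_n=q$ beforehand keeps this union as small as possible (three or four indices). Every edge not incident to $\bar{0}$ is realized exactly as before, \emph{except} that privacy of an interior edge $\{i,i+1\}$ now also demands that its private pair $p_i$ avoid $S_{\bar{0}}=p\cup q$. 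The hard part will be precisely this \emph{straddling} phenomenon: a priori $p_i$ could be a cross pair with one index in $p$ and one in $q$, so that $p_i\subseteq p\cup q$ even though $p_i\not\subseteq p$ and $p_i\not\subseteq q$, which would destroy the merged cover.

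To overcome this I would prove that an optimal cover of $P_{n+1}$ can always be put into a \emph{straddle-free normal form}, in which no interior private pair is contained in $p\cup q$; merging then yields a cover of $C_n$ with the same number of sets. I expect this normalization to be carried out by the same kind of local surgery on the families $\{A_i\}$ used in the two lemmas preceding this theorem — rerouting an offending interior private pair to an alternative pair, or re-selecting $p$ and $q$ using the freedom afforded by the degree-one endpoints — together with a short case analysis on how the sequence of private pairs closes up (mirroring the distinction between an open and a closed Eulerian-type trail in the complete graph on the index set $[s]$). Verifying that this surgery never increases $s$ and that it terminates is the crux; once it is in place, the two inequalities combine to give $\iota(C_n)=\iota(P_{n+1})$.
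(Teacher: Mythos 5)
Your first inequality, $\iota(P_{n+1})\le\iota(C_n)$, is proved completely and correctly: shrinking the label of the cut vertex to the private pair $p$ at one end and introducing the new endpoint with label $q$ at the other, and then checking privacy against the two-element sets $p$ and $q$, is airtight. (It is also a genuinely different, and cleaner, argument than the paper's, which keeps all sets large and splits into three cases according to whether a set contains $\{n-1,0,1\}$, contains $\{n-1,0\}$ but not $1$, or neither, adding the element $n$ and sometimes deleting $0$.) The genuine gap is in the reverse inequality $\iota(C_n)\le\iota(P_{n+1})$: you reduce it, correctly, to the claim that an optimal cover of $P_{n+1}$ admits a \emph{straddle-free normal form} in which no interior private pair $p_i$ is contained in $p\cup q$, but you never prove that claim --- you only say you ``would'' prove it by local surgery and ``expect'' the surgery to terminate without increasing $s$. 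Normalizing the endpoints to $S'_0=p$ and $S'_n=q$ does nothing to prevent a pair $p_i$ with one index in $p\setminus q$ and one index in $q\setminus p$; such a pair satisfies $p_i\not\subseteq p$ and $p_i\not\subseteq q$ yet $p_i\subseteq p\cup q$, so the merged labeling fails, and nothing in your proposal rules this configuration out or repairs it. Since eliminating straddles is exactly the content of the inequality, what you have is a proof of one direction plus a correct diagnosis of the obstruction in the other, not a proof of the theorem.

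For comparison, be aware that the paper's own proof of that direction does not contain the missing lemma either: it maps a cover $\{H_i\}$ of $P_{n+1}$ to $\{H'_i\}$ with $H'_i:=\psi(H_i)$, $\psi(z):=z\bmod n$, and justifies it by the assertion that $|H'_i\cap H'_j|\le|H_i\cap H_j|$ for all $i,j$. That assertion is false in general: if $0\in H_i\setminus H_j$ and $n\in H_j\setminus H_i$ while $H_i\cap H_j=\{k,k+1\}$ is an interior edge, then $H'_i\cap H'_j=\{\bar{0},\bar{k},\overline{k+1}\}$, which is strictly larger and is not an edge of $C_n$. These are precisely your straddling configurations translated to the primal side, so your identification of the crux is sharper than the paper's treatment of it; but neither your proposal nor the paper's projection argument shows that straddles can always be removed at no cost in $s$, and supplying that argument (or an entirely different construction for this inequality) is what remains to be done.
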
 
\begin{proof} 
Let $\mathcal{H}=\{ H_i\}_{i=1}^s$ be a $2$-fold intersecting cover of $P_{n+1}$, 
and $\psi$ be the ring isomorphism from $\ZZ$ to $\ZZ_n$. 
For a set $H_i\in \mathcal{H}$, let us define $H'_i:=\{ \psi(z) \mid z\in H\}$. 
Note that $|H'_i\cap H'_j|\leq |H_i\cap H_j|$ for each $i, j$. 
Then, if $H_i\cap H_j$ is an edge of $P_{n+1}$, $H'_i\cap H'_j$ is an edge of $C_n$. 
Therefore, $\mathcal{H}':=\{ H'_i\}_{i=1}^s$ is  a $2$-fold intersecting cover of $C_n$. 

Conversely, let $\mathcal{H}=\{ H_i\}_{i=1}^s$ be a $2$-fold intersecting cover of $C_n$ 
such that $\{ 0, 1\}=H_1\cap H_s$, and $\psi'$ be the natural injection 
from $\mathbb{Z}_n$ to $\{0, \ldots, n\}$ such that 
$\psi'(\bar{0})=0$ and $\psi'(\bar{i})=i$ for each $\bar{i}\in \mathbb{Z}_n \setminus \{\bar{0}\}$. 
Then, we define the set $H'_i$ as follows; 
(i) if $\{ \bar{n-1}, \bar{0}, \bar{1} \} \subset H_i$, let $H'_i:=\{ \psi'(\bar{z}) \mid \bar{z}\in H_i\} \cup \{ n\}$; 
(ii) if $\{ \bar{n-1}, \bar{0} \} \subset H_i\not\ni \bar{1}$, 
 let $H'_i:=(\{ \psi'(\bar{z}) \mid \bar{z}\in H_i\} \cup \{ n\})\setminus \{ 0\}$;   
 (iii) otherwise, $H'_i:=\{ \psi'(\bar{z}) \mid \bar{z}\in H_i\}$. 
 Then, if $H_i\cap H_j$ is an edge of $C_n$, $H'_i\cap H'_j$ is an edge of $P_{n+1}$. 
 Therefore, $\mathcal{H}':=\{ H'_i\}_{i=1}^s$ is  a $2$-fold intersecting cover of $P_{n+1}$. 
\end{proof} 
\begin{thm} \label{thm:iota_inf} 
The inequation
 $T^{\iota(C_n)}(S_{4n})\not=T^{\iota(C_n)}(S'_{4n})$ holds. 
\end{thm}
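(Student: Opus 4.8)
The plan is to realize a circuital loose cycle of length $n$ inside $\mathcal{X}(\mathcal{A})$ using only $\iota(C_n)$ subsets, and then to argue that the monomial this tuple contributes to $R^{(\iota(C_n))}(S_{4n})$ cannot be produced by any tuple on the $S'_{4n}$ side. The bridge between the two notions is the identification of the vertex set of $C_n$ with the even elements $\{0,2,\ldots,2n-2\}$ of $\ZZ_{4n}$, under which each edge $\{v,v+1\}$ of $C_n$ corresponds to the consecutive even pair $\{2v,2v+2\}$, and the three-element circuit $\{2v,2v+1,2v+2\}\in\mathcal{F}_1$ is recovered from that pair by inserting the odd ``midpoint'' $2v+1$. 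Thus realizing all of $\mathcal{F}_1$ as pairwise intersections of the $A_i$ is exactly the problem of covering every edge of $C_n$ as an intersection of two members of a set family, i.e.\ of producing a $2$-fold intersecting cover of $C_n$.

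First I would fix a minimum $2$-fold intersecting cover $\mathcal{H}=\{H_1,\ldots,H_s\}$ of $C_n$ with $s=\iota(C_n)$, and for each edge $e=\{v,v+1\}$ let $X_e=\{a_e,b_e\}$ be an index pair with $H_{a_e}\cap H_{b_e}=\{v,v+1\}$. I would then define $A_1,\ldots,A_s\subseteq\ZZ_{4n}$ by placing the even element $2v$ into $A_k$ exactly when $v\in H_k$, and placing the odd element $2v+1$ into $A_{a_e}$ and $A_{b_e}$ only, where $e=\{v,v+1\}$. Since two distinct edges cannot share a realizing pair, the map $e\mapsto X_e$ is injective, and this is precisely what prevents any spurious odd element from entering $A_{a_e}\cap A_{b_e}$. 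Combined with $H_{a_e}\cap H_{b_e}=\{v,v+1\}$ controlling the even part, one checks that $A_{a_e}\cap A_{b_e}=\{2v,2v+1,2v+2\}$ exactly, for every edge $e$. Hence every member of $\mathcal{F}_1$ lies in $\mathcal{X}(\mathcal{A})$, so $\mathcal{X}(\mathcal{A})$ contains a circuital loose cycle of length $n$ while $\mathcal{A}$ has only $g=\iota(C_n)$ entries.

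It remains to convert this realization into a genuine inequality of polynomials. I would read off the monomial $\mu$ that $\mathcal{A}$ contributes to $R^{(g)}(S_{4n})$: for each edge pair $\{a_e,b_e\}$ the set $A_{a_e}\cap A_{b_e}$ is a three-element circuit of $S_{4n}$, so $\mu$ carries a factor recording $\rho(E)-\rho=1$ and $|A_{a_e}\cap A_{b_e}|-\rho=1$ in the corresponding intersection variables, together with all the prescribed sizes and ranks of the $A_i$, the $A_i\cap A_j$, and the $A_i\cup A_j$. I would then show that $\mu$ is absent from $R^{(g)}(S'_{4n})$. Suppose some tuple $\mathcal{A}'$ produced $\mu$ there; matching the recorded exponents forces the designated intersections $A'_{a_e}\cap A'_{b_e}$ to be three-element circuits of $S'_{4n}$, i.e.\ members of $\mathcal{F}_3\cup\mathcal{F}_4$, arranged in the cyclic incidence pattern of a loose cycle of length $n$. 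But the three-element circuits of $S'_{4n}$ are exactly $\mathcal{F}_3\cup\mathcal{F}_4$, whose even-vertex incidence graph on $\{0,2,\ldots,4n-2\}$ is a single cycle of length $2n$; since a proper sub-collection of the edges of a cycle can never itself form a shorter cycle, $S'_{4n}$ contains no circuital loose cycle of length $n$. This contradiction would give $\mu\notin R^{(g)}(S'_{4n})$ and hence $T^{(\iota(C_n))}(S_{4n})\neq T^{(\iota(C_n))}(S'_{4n})$.

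The main obstacle is precisely the justification that matching $\mu$ forces the cyclic incidence pattern. A genus $g$ monomial records only the pairwise data $|A_i|$, $|A_i\cap A_j|$, $|A_i\cup A_j|$ and their ranks, but not the sizes of triple or higher intersections, so whether two realized circuits share a vertex is not literally one of the recorded exponents. To make the step airtight I would either choose the cover $\mathcal{H}$ so that the recorded pairwise sizes already pin down the even-vertex incidence structure, thereby forcing the cyclic rather than a path-like arrangement, or invoke the size- and rank-preserving correspondence $\phi_{\mathcal{A}}$ of Theorem \ref{thm:existance_higher_genus}: that correspondence pairs up all tuples whose $\mathcal{X}$ avoids a full loose cycle or loose path, and it is exactly the loose-cycle tuples that are left without a partner on the $S'_{4n}$ side. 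Verifying that the surviving loose-cycle contribution does not cancel against other loose-cycle contributions is the delicate residual point of the whole argument.
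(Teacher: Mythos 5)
Your proposal is, in substance, the paper's own proof: the paper's entire argument for this theorem is the single sentence that $R^{\iota(C_n)}(S_{4n})$ has a monomial generated by $\mathcal{F}_1$ while $R^{\iota(C_n)}(S'_{4n})$ does not, which is exactly your lifting of a minimum $2$-fold intersecting cover of $C_n$ (even vertices $2v$, odd midpoints $2v+1$ inserted for realized edges) together with your claimed absence of the resulting monomial on the $S'_{4n}$ side. The step you flag as the ``delicate residual point'' --- that a genus-$g$ monomial records only pairwise sizes and ranks, so matching it forces the intersections $A'_{a_e}\cap A'_{b_e}$ to be members of $\mathcal{F}_3\cup\mathcal{F}_4$ but does not by itself force them into a \emph{closed} loose cycle of length $n$ rather than, say, a path-like configuration (which $S'_{4n}$ genuinely possesses, since its three-element circuits form one loose cycle of length $2n$ containing loose paths of length $n$) --- is a real gap, but it is left equally unaddressed by the paper, whose proof asserts the monomial's absence with no supporting argument. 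In short, you have reconstructed the published proof including its weak point, and your write-up, which makes the cover-lifting explicit and isolates precisely what remains to be verified, is if anything more careful than the original.
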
 
\begin{proof} 
The Whitney rank generating function $R^{\iota(C_n)}(S_{4n})$ has a monomial 
generated by $\mathcal{F}_1$, while $R^{\iota(C_n)}(S'_{4n})$ 
does not. 
\end{proof}

\begin{thm} \label{thm:iota_sup} 
For each integer at least $3$, we have $\iota (C_n)\leq 2\lceil \sqrt{n}\rceil$. 
\end{thm}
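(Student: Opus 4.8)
The plan is to construct an explicit $2$-fold intersecting cover of the cycle graph $C_n$ using roughly $2\lceil\sqrt{n}\rceil$ sets, since $\iota(C_n)$ is defined as the minimum size of such a cover. The guiding idea is a two-dimensional coordinate (or ``grid'') encoding of the vertices: if I write $m:=\lceil\sqrt{n}\rceil$, then every vertex $v\in\mathbb{Z}_n$ can be assigned a pair of coordinates $(a(v),b(v))$ with $a(v),b(v)\in\{0,1,\ldots,m-1\}$, for instance via $v\mapsto(\lfloor v/m\rfloor,\, v\bmod m)$. I then build one family of $m$ ``row'' sets $R_0,\ldots,R_{m-1}$ and one family of $m$ ``column'' sets $C_0,\ldots,C_{m-1}$, where $R_a$ collects all vertices with first coordinate $a$ and $C_b$ collects all vertices with second coordinate $b$. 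This uses exactly $2m=2\lceil\sqrt{n}\rceil$ sets, matching the target bound.

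\textbf{Verifying the intersection property.} With the plain grid coordinates, each vertex $v$ is the unique point lying in both $R_{a(v)}$ and $C_{b(v)}$, so $R_{a(v)}\cap C_{b(v)}=\{v\}$, a singleton rather than an edge. To pin down an \emph{edge} $\{v,v+1\}$ of $C_n$ as a two-set intersection, I need the two endpoints to share a coordinate or to be isolated together by a cleverly chosen pair. The natural fix is to enlarge the sets so that each row set $R_a$ contains a short overlap with the next row, i.e.\ to let the sets be intervals (or unions of two consecutive grid lines) designed so that for every edge $\{v,v+1\}$ there is a designated pair of sets whose intersection is exactly $\{v,v+1\}$. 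Concretely I would partition the edge set of $C_n$ into ``horizontal'' edges (both endpoints in a common row) and ``transition'' edges (endpoints in consecutive rows), handle the horizontal ones by intersecting a row set with an appropriately defined column-type set, and handle the $O(m)$ transition edges by augmenting the column sets to straddle row boundaries.

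\textbf{The main obstacle} is precisely controlling these transition edges — the places where $v+1$ crosses from one row to the next — without inflating the number of sets beyond $2m$ or accidentally producing an unwanted two-set intersection larger than an edge. The delicate point is the condition $\{u,v\}=\cap_{i\in X_{\{u,v\}}}H_i$ with $|X_{\{u,v\}}|=k=2$: I must ensure that for every edge there is \emph{some} pair realizing it exactly, while no ``false'' pair of sets intersects in a set properly containing an edge in a way that would force a vertex to be miscovered. Since $\iota$ counts only the minimum number of sets (not that every pair be used), the covers have slack, so I expect the construction to go through by choosing the row/column sets as carefully overlapping intervals in the cyclic order; the wrap-around of $\mathbb{Z}_n$ will require one extra special case, but the earlier theorem $\iota(C_n)=\iota(P_{n+1})$ lets me instead work on the path $P_{n+1}$, where the linear order removes the wrap-around subtlety entirely. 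I would therefore carry out the construction on $P_{n+1}$, exhibit the $2\lceil\sqrt{n}\rceil$ sets there, and invoke $\iota(C_n)=\iota(P_{n+1})$ to conclude.
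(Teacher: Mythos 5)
Your overall grid strategy has the same flavor as the paper's construction (one family of interval ``rows'' plus a second family that carves each row into edges), and two of your meta-observations are sound: the definition of a $2$-fold intersecting cover only requires that a realizing pair \emph{exist}, so ``false'' intersections cost nothing, and $\iota(C_n)=\iota(P_{n+1})$ legitimately lets you dodge wrap-around issues. But the proposal has a genuine gap at its center: you never define the ``column-type'' sets, and the natural candidates you gesture at provably fail. Write $m=\lceil\sqrt n\rceil$. To realize the transition edges as $R_a\cap R_{a+1}$ you must let consecutive rows overlap in \emph{two} vertices, say $R_a=\{am,\dots,(a+1)m+1\}$; if you then take width-two vertical strips $C_b=\bigcup_a\{am+b,\,am+b+1\}$, the intersection $R_a\cap C_1$ picks up the extra vertex $(a+1)m+1$ from the block of $C_1$ sitting in row $a+1$, so it equals $\{am+1,am+2,(a+1)m+1\}$, and the edge $\{am+1,am+2\}$ is then realized by no pair at all. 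If instead you make the rows disjoint and try to cover transition edges by columns that ``straddle row boundaries,'' then $R_a\cap C_{m-1}$ becomes the non-edge $\{am,\,am+m-1\}$ and $C_{m-1}\cap C_0$ becomes the set of all multiples of $m$. So ``carefully overlapping intervals'' is precisely the step that needs an idea, and the proposal does not supply one.

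The paper's resolution is that the second family must be \emph{diagonal}, not vertical: $B_j:=\bigcup_{0\le k\le p-2}\bigl((j+k+1)p+\{k+1,k+2\}\bigr)$, i.e.\ the two-element block of $B_j$ shifts its position by one each time the row index increases. This shift is exactly what pushes the blocks of $B_{q-r}$ lying in rows $q\pm 1$ outside the interval $A_q=\{pq,\dots,p(q+1)+1\}$, so that $A_q\cap B_{q-r}=\{pq+r,\,pq+r+1\}$ for $1\le r\le p-1$, while the two boundary edges of each row come from $A_{q-1}\cap A_q$ and $A_q\cap A_{q+1}$; your proposal contains no substitute for this mechanism. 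Moreover, even granting the square case $n=p^2$, the general bound does not follow by hand-waving: the paper needs a full recursive argument (contracting $C_{p^2}$ down to $C_n$ while repairing the cover at each step) to handle $3p\le n<p^2$, together with explicit covers for the small values of $n$, and your plan would need an analogous truncation argument on the path. As it stands, you have correctly identified the obstacle but not overcome it.
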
 
\begin{proof} 
(Case 1) We consider the case that $n$ is a square number at least $9$, e.g.,  we assume  $n=p^2$. 
Then, we have $p\geq 3$. 

Take $2p$ sets $A_0$, $A_1$, \ldots, $A_{p-1}$, $B_0$, $B_1$, \ldots, $B_{p-1}$ as follows;  
\begin{align*} 
A_i&:=\{ z\in \mathbb{Z}_{p^2} \mid i p \leq z\leq  i p+1\}, \\ 
B_j 
&:=\bigcup_{0\leq k\leq p-2} \left( (j+k+1 )p+\{ k+1,k+2\} \right) 
\end{align*} 
Because $A_{i+1}=A_i+p$ and $B_{j+1}=B_j+p$ hold for $0\leq i, j\leq p-1$, 
the indices are taken modulo  $p$.  

It is clear that for $0\leq i\leq p-1$, we have 
\begin{align} 
A_i\cap A_{i+1}=\{( i+1)p, (i+1)p+1\}.  \label{equation_As}
\end{align} 
It is also clear that $A_0\cap B_0=\{ p+1, 0\}$ from the definition 
$B_0=\{ p+1, p+2, , 2p+2, 2p+3, 3p+3, 3p+4, \ldots, (p-1)p+p-1, (p-1)p+p=0\}$. 
Thus, we have $A_{i}\cap B_i=\{ ip, (i+1)p+1\}$ from $A_i=A_{i-1}+ip$ and $B_i=B_{i-1}+ip$. 

%

For $1\leq r\leq p-1$,
we have the following by substituting $j=q-p$ and $k=r-1(\leq p-2)$;  
\begin{align*} 
A_q\cap B_{q-r} &\supset \left( ((q-r)+(r-1)+1 )p+\{ (r-1)+1, (r-1)+2\} \right) \\ 
&=\{ pq+r, pq+r+1\}.  
\end{align*}
If $r\not=p-1$, we have $\{ p(q+1)+r+1, p(q+1)+r+2\} \subset A_{q+1} \cap B_{q-r}$ 
and $\{ p(q+1)+r+1, p(q+1)+r+2\}\cap (A_q\cap A_{q+1})=\emptyset$ from $r\geq 1$ and (\ref{equation_As}).  
If $r=p-1$, we have $B_{q-r}=B_{q+1}$, i.e., $A_{q+1}\cap B_{q+1}=\{ (q+1)p, (q+2)p+1\}$ 
and $A_q\cap B_{q-r}=\{ (q+1)p-1, (q+1)p\}$. 
Then, we have $p(q+2)+1\not\in A_q$ from (\ref{equation_As}).  
In both cases, the elements of  $( A_{q+1}\cap B_{q-r})  \setminus \{ pq+r, pq+r+1\}$ do not belong to $A_q$. 
By analogous argument,  
the elements of  $( A_{q-1}\cap B_{q-r})  \setminus \{ pq+r, pq+r+1\}$ do not belong to $A_q$. 
Therefore,  we have the following equation;  
\begin{align} 
A_q\cap B_{q-r}=
\begin{cases} 
\{ pq, p(q+1)+1\} &(\textnormal{if } r=0), \\ 
\{ pq+r, pq+r+1\} & (\textnormal{otherwise }). \label{equation_between_AB} 
\end{cases} 
\end{align} 
Therefore, $p+1$ edges of $A_q$ are 
$A_{q-1}\cap A_q=\{ pq, pq+1\}$, $A_q\cap B_{q-1}=\{ pq+1, pq+2\}$, $A_q\cap B_{q-2}=\{ pq+2, pq+3\}$, 
\ldots, $A_q\cap B_{q+1}=\{p(q+1)-1, p(q+1)\}$ $A_q\cap A_{q-1}=\{ p(q+1), p(q+1)+1\}$. 


(Case 2) We consider the case that $n$ is not a square number at least $12$. 
Let $p$  be the smallest integer such that $n<p^2$. Then we have $p\geq 4$. 
 Let $\mathcal{H}^{(0)}=\{ A^{(0)}_i, B^{(0)}_j \mid i, j\in \mathbb{Z}_p\}
:=\{ A_i, B_j \mid i, j\in \mathbb{Z}_p\}$ be a $2$-fold intersecting cover of $\ZZ_{p^2}$ in (Case 1). 
Define graphs $G^{(q)}$ ($1\leq q\leq p$) with vertex set $\ZZ_{p^2}$ recursively as follows;  
$G^{(0)}:=C_{p^2}$, take an integer $k_q$ such that $2\leq k_q\leq p-1$, and 
define $G^{(q)}$ as 
$E(G^{(q)})=(E(G^{(q-1)} ) \setminus \{ (i, i+1)\mid pq+1\leq i\leq pq+k_q-1\})\cup \{ (pq+1, pq+k_q)\}$. 
Clearly, we have $|E(G^{(q)})|=|E(G^{(q-1)})|-(k_q-2)$. 
In particular, $|E(G^{(q)})|=|E(G^{(q-1)})|-(p-3)$ if $k_q=p-1$. 
For each $1\leq q\leq p$, 
we define recursively  $\mathcal{H}^{(q)}=\{ A^{(q)}_i, B^{(q)}_{q-j} \mid i, j\in \mathbb{Z}_p\}$ as follows; 
\begin{align*} 
A^{(q)}_i&:=
\begin{cases} 
A^{(q-1)}_i & (\textnormal{if } i\not=q),  \\ 
A^{(q-1)}_i\setminus \{ pq+2, pq+3, \ldots, pq+k_q-1 \} & (\textnormal{if } i=q), 
\end{cases} \\ 
B^{(q)}_{q-j}&:=
\begin{cases} 
B^{(q-1)}_{q-j} & (\textnormal{if } k_q \leq j\leq p ), \\ 
B^{(q-1)}_{q-j}\setminus \{ pq+j, pq+j+1\} & (\textnormal{if } 2\leq j\leq k_q-1), \\ 
(B^{(q-1)}_{q-j}\setminus \{ pq+2\})\cup \{ pq+k_q\} &(\textnormal{if }j=1). 
\end{cases} 
\end{align*}

Then, the following holds for each $1\leq q\leq p$;

\begin{itemize}
\item $pq+2, pq+3, \ldots, pq+k_q \in A^{(q-1)}_q\setminus 
\bigcup_{i\not= q} A^{(q-1)}_i$  because of  $2\leq k_q \leq p-1$; 
\item $B^{(q)}_j\cap A^{(q)}_i=B^{(q-1)}_j\cap A^{(q-1)}_i$ holds for each $j$ if $i\not=q$
 because each element of $\mathbb{Z}_{p^2}\setminus A^{(q-1)}_q$ belongs to $A^{(q)}_i$ (resp. $B^{(q)}_j$)  
if and only if it belongs to $A^{(q-1)}_i$ (resp. $B^{(q-1)}_j$); 
\item $A^{(q)}_{i-1}\cap A^{(q)}_i=A^{(q-1)}_{i-1}\cap A^{(q-1)}_i=\{ip, ip+1\}$ also holds for each $i$; 
\item the edges in $A^{(q)}_q$ of $G^{(q)}$ are  
$A^{(q)}_{q-1}\cap A^{(q)}_q=\{ pq, pq+1\}$, $A^{(q)}_q\cap B^{(q)}_{q-1}=\{ pq+1, pk+k_q\}$, 
$A^{(q)}_q\cap B^{(q)}_{q-k_q}=\{ pq+k_q, pq+k_q+1\}$, 
\ldots, $A^{(q)}_q\cap B^{(q)}_{q+1}=\{ p(q+1)-1, p(q+1)\}$, and $A^{(q)}_q\cap A^{(q)}_{q+1}=\{ p(q+1), p(q+1)+1\}$. 
\end{itemize} 
Then, $\mathcal{H}^{(q)}$ is a $2$-fold intersecting cover of $G^{(q)}$. 
Therefore, $C_n$ has a $2$-fold intersecting cover with $2p$ sets if $3p\leq n<p^2$.

(Case 3) We consider the remaining small $n$. 
Then, the $2$-fold intersecting covers 
\begin{itemize} 
\item $\{ \{ 1, 2\}, \{ 2, 0\}, \{ 0, 1\}, \{ 0,1,2\} \}$ for the case of $C_3$, 
\item $\{ \{ i, i+1, i+2\} \mid i\in \mathbb{Z}_4\} $, for the case of $C_4$, 
\item $\{ \{ i, i+1, i+2\} \mid i\in \mathbb{Z}_5\} $ for the case of $C_5$, 
\item $\{ \{ i, i+1, i+2\} \mid i\in \mathbb{Z}_6\} $ for the case of $C_6$, 
\item $A_1=\{0,1,2\}, A_2=\{ 1,2,3,4\}, A_3=\{ 3,4,5,6\}, A_4=\{ 5,6,0,1\}, A_5=\{ 2,3,6,0\}, A_6=\{ 4,5\} $ for $C_7$, 
\item $A_1=\{ 0,1,2,3\}, A_2=\{ 2,3,4,5\}, A_3=\{ 4,5,6,7\}, A_4=\{ 6,7,0,1\}, A_5=\{ 1,2,5,6\}, A_6=\{ 3,4,7,0\} $ for $C_8$, 
\item $A_1=\{ 0,1,2,3\}, A_2=\{ 2,3,4,5\}, A_3=\{ 4,5,6,7\}, A_4=\{ 6,7,8,9\}, 
A_5=\{ 8,9,0,1\}, A_6=\{ 1,2,5,6\}, A_7=\{ 3,4,7,8\}, A_8=\{ 9,0\} $ for $C_{10}$, 
\item $A_1=\{ 0,1,2,3,4\}, A_2=\{ 3,4,5,6\}, A_3=\{ 5,6,7,8\}, A_4=\{ 7,8,9,10\}, 
A_5=\{ 9,10,0,1\}, A_6=\{ 1,2,6,7\}, A_7=\{ 2,3,8,9\}, A_8=\{ 4,5,10,0\} $ for the case $C_{11}$
\end{itemize} 
satisfy the statement of our theorem. 
\end{proof} 

\begin{prop} \label{prop:triangle_square} 
For each integer 
 $n\geq 3$, we have the following; 
\begin{align*} 
2\left\lceil \sqrt{n} \right\rceil \leq \left\lceil \sqrt{2} 
\left\lceil \frac{5+\sqrt{8(n-2)+1}}{2}\right\rceil -2 \right\rceil. 
\end{align*} 
\end{prop}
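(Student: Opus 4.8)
The plan is to peel off the two nested ceilings and reduce the whole statement to a single clean strict inequality, being careful not to throw away slack, since the bound is close to tight (equality holds at, e.g., $n=5,10,17$). Write $G_n:=\lceil\frac{5+\sqrt{8(n-2)+1}}{2}\rceil$ and $m:=\lceil\sqrt n\rceil$, so the assertion reads $2m\le\lceil\sqrt2\,G_n-2\rceil$. Because $2\in\ZZ$ we have $\lceil\sqrt2\,G_n-2\rceil=\lceil\sqrt2\,G_n\rceil-2$, so the claim is equivalent to $\lceil\sqrt2\,G_n\rceil\ge 2m+2$. Since $G_n\in\NN$, the number $\sqrt2\,G_n$ is irrational; as $2m+1\in\ZZ$, the inequality $\lceil\sqrt2\,G_n\rceil\ge 2m+2$ holds if and only if $\sqrt2\,G_n>2m+1$. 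Thus everything reduces to the single strict inequality
\[
\sqrt2\,G_n>2\lceil\sqrt n\rceil+1 .
\]

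The main obstacle is precisely that this is nearly tight, so slack-losing estimates fail: for instance, bounding $\lceil\sqrt n\rceil$ from above by $\sqrt n+1$ and dropping the ceiling defining $G_n$ already produces a false intermediate inequality at $n=10$. The fix is to keep $m=\lceil\sqrt n\rceil$ \emph{exact} and instead exploit the integrality of $n$: from $m=\lceil\sqrt n\rceil$ we get $(m-1)^2<n$, hence $n\ge(m-1)^2+1$. Dropping only the ceiling in the definition of $G_n$ gives $G_n\ge\frac{5+\sqrt{8n-15}}{2}$ (using $8(n-2)+1=8n-15$), and this lower bound is increasing in $n$ while the target $2m+1$ depends only on $m$. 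Therefore it suffices to verify the inequality at the smallest admissible $n$, namely $n=(m-1)^2+1$, where $8n-15=8(m-1)^2-7$. So it remains to prove
\[
\frac{5+\sqrt{8(m-1)^2-7}}{2}>\sqrt2\,m+\frac{1}{\sqrt2}
\]
for every $m\ge 3$, together with a direct check of the finitely many cases forced by $n\ge 3$ with $m=2$ (that is, $n\in\{3,4\}$).

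To finish the generic case I would substitute $t:=m-1\ge 2$ and rewrite the last display as $\sqrt{8t^2-7}>2\sqrt2\,t+3\sqrt2-5$. The right-hand side is positive for $t\ge2$, so squaring is legitimate; the $8t^2$ terms cancel and one is left with the linear inequality $(20\sqrt2-24)\,t>50-30\sqrt2$. Since $20\sqrt2-24>0$, the left side increases in $t$, so it is enough to test $t=2$, where the inequality becomes $40\sqrt2-48>50-30\sqrt2$, i.e. $70\sqrt2>98$, i.e. $\sqrt2>1.4$, which holds since $2>1.96$. This settles $m\ge3$; for $m=2$ one simply computes $G_3=4$, whence $\sqrt2\,G_3=4\sqrt2>5=2\cdot2+1$.

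Finally, it is worth recording why this elementary estimate is the right thing to prove. Combined with Proposition \ref{prop:g} (which gives $T^{(g)}(S_{4n})=T^{(g)}(S'_{4n})$ for all $g\le G_n$), Theorem \ref{thm:iota_inf}, Theorem \ref{thm:iota_sup} ($\iota(C_n)\le 2\lceil\sqrt n\rceil$), and the fact that $T^{(g)}$-inequivalence at one genus forces it at every larger genus (via the specialization formula relating $T^{(g)}$ and $T^{(g+1)}$), the bound $2\lceil\sqrt n\rceil\le\lceil\sqrt2\,G_n-2\rceil$ guarantees $\lceil\sqrt2\,G_n\rceil\ge 2\lceil\sqrt n\rceil+2\ge\iota(C_n)$. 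Setting $g:=G_n$ then yields agreement at genus $g$ and inequivalence at genus $\lceil\sqrt2\,g\rceil$, which is exactly the mechanism converting the gap between the two thresholds into the factor $\sqrt2$ in Theorem \ref{thm:main}(2).
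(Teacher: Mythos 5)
Your proof is correct, and it takes a genuinely different route from the paper's. The paper works at perfect squares: writing $u_n=2\lceil\sqrt n\rceil$, $l_n=G_n$, $\hat l_n=\lceil\sqrt2\,l_n\rceil-2$, it takes $n'$ to be the largest square at most $n$, invokes the triangular-number characterization of $l_{n'}$ (that $(l_{n'}-3)(l_{n'}-2)/2$ is the least triangular number at least $n'-2$) to get $4n'\le 2(l_{n'}-3)(l_{n'}-2)+8<\lceil\sqrt2\,l_{n'}-3\rceil^2$, concludes $u_{n'}\le\hat l_{n'}-2$, and then spends the slack of $2$ gained at squares to cover the remaining $n$ via $u_n=u_{n'}+2$ and monotonicity of $\hat l_n$. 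You instead fix $m=\lceil\sqrt n\rceil$ and anchor at the bottom of each band, $n=(m-1)^2+1$: you strip both ceilings exactly by integrality ($\lceil x+k\rceil=\lceil x\rceil+k$ and $\lceil x\rceil\ge k+1\iff x>k$ for $k\in\ZZ$), reducing the whole statement to the single strict inequality $\sqrt2\,G_n>2m+1$, which one legitimate squaring turns into a linear inequality in $t=m-1$ verified at $t=2$. Both proofs cope with the near-tightness (equality at $n=5,10,17$) by evaluating at an endpoint of each band, the paper at the top and you at the bottom; what yours buys is that every step is an equivalence or a monotone reduction in self-contained elementary algebra, whereas the paper's chain rests on an external fact about triangular numbers and, as printed, its middle identity is off by a constant (in fact $2(l-3)(l-2)+8=(\sqrt2\,l-5/\sqrt2)^2+15/2$, not $-5/2$), after which the strict comparison with $\lceil\sqrt2\,l_{n'}-3\rceil^2$ needs separate justification for small $l_{n'}$; your argument is therefore also the easier of the two to verify. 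One nit: for $m=2$ you announce the cases $n\in\{3,4\}$ but only compute $G_3=4$; add that $G_4\ge G_3$ by monotonicity of $G_n$ (or simply $G_4=5$) to close $n=4$.
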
 
Let $u_n$, $l_n$, $\hat{l}_n$ be the following monotonically non-decreasing sequences such that 
\begin{align*} 
u_n&= 2\left\lceil \sqrt{n} \right\rceil,  \\ 
l_n&=\left\lceil \frac{-1+\sqrt{8(n-2)+1}}{2}+3\right\rceil, \\ 
\hat{l}_n&=\left\lceil \sqrt{2} \, l_n \right\rceil-2. 
\end{align*} 
\begin{proof} 
If $n=3$, it is trivial because $u_3=\hat{l}_3=4$.   
Let us assume that $n\geq 4$. 
Let $n'$ be the maximum square number at most $n$. 
 It is well-known that  
$(l_{n'}-3)(l_{n'}-2)/2$ is the minimum triangular number at least $n'-2$. 
Therefore, we have 
\begin{align*} 
4n' \leq 2(l_{n'}-3)(l_{n'}-2)+8 
=\left(\sqrt{2} l_{n'}-\frac{5}{\sqrt{2}}\right)^2-\frac{5}{2} 
<\left\lceil\sqrt{2}l_{n'}-3 \right\rceil ^2. 
\end{align*} 
Because the both sides of the above inequation are square numbers, 
we have 
\begin{align*} 
u_{n'}=2\sqrt{n'}<\left\lceil \sqrt{2}l_{n'}\right\rceil -3. 
\end{align*} 
Because the both sides of the above inequation are positive integers, we have 
\begin{align*} 
u_{n'}\leq \left\lceil \sqrt{2}l_{n'}\right\rceil -4=\hat{l}_{n'}-2.  
\end{align*} 
If $n=n'$, we have done. 
If $n>n'$, then 
we have the following inequation; 
\begin{align*} 
u_n=u_{n'+1}=u_{n'}+2\leq \hat{l}_{n'}\leq \hat{l}_n. \qedhere
\end{align*} 
\end{proof}

Here, we reprise Theorem \ref{thm:main} (2). 
\begin{thm} \label{matroidsMandN}
For any positive integer $g$, there exist two 
matroids $M$ and $N$ such that
$T^{(g)}(M)=T^{(g)}(N)$ and 
$T^{(\lceil\sqrt{2}g  \rceil)}(M) \neq T^{(\lceil\sqrt{2}g \rceil)}(N)$.
\end{thm}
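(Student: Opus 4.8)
The plan is to realise every instance of the theorem with a single parametrised family, taking $M=S_{4n}$ and $N=S'_{4n}$ for an $n=n(g)$ chosen to fit the target genus $g$. Everything reduces to understanding, for this family, the window of genera on which the two matroids agree versus the genus at which they are first forced apart, and then exploiting the gap between these two numbers. The three inputs are the agreement bound of Proposition~\ref{prop:g}, the separation statement $T^{(\iota(C_n))}(S_{4n})\neq T^{(\iota(C_n))}(S'_{4n})$ of Theorem~\ref{thm:iota_inf}, and the cover estimate $\iota(C_n)\le 2\lceil\sqrt{n}\rceil=u_n$ of Theorem~\ref{thm:iota_sup}.

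First I would isolate a monotonicity principle hidden in the genus-lowering identity recorded in Section~\ref{sec:Intro}, namely $T^{(g)}(M)=2^{-|E_M|}\,T^{(g+1)}(M)|_{\ast}$, where $\ast$ sets all the newly introduced variables equal to $2$. Because $S_{4n}$ and $S'_{4n}$ share the ground set $\ZZ_{4n}$, equal genus $g+1$ polynomials would specialise to equal genus $g$ polynomials; contrapositively, inequality at genus $g'$ propagates to every genus $\ge g'$. Combining this with Theorem~\ref{thm:iota_inf} and the counting lower bound behind Proposition~\ref{prop:g} shows that the set of genera on which the pair differs is exactly the up-set $\{g':g'\ge \iota(C_n)\}$, while agreement holds for all smaller genera.

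Next I would choose $n$ as a function of $g$ so that $g$ sits just below the separation genus while $\iota(C_n)$ stays within the permitted range. Using that the agreement threshold $l_n=\lceil(5+\sqrt{8n-15})/2\rceil$ is non-decreasing and rises by at most one at each step — hence attains every integer value from $l_3$ onward — I would pick $n$ with $l_n=g$. Then $T^{(g)}(S_{4n})=T^{(g)}(S'_{4n})$ by Proposition~\ref{prop:g}, and the separation genus satisfies the chain
\[
\iota(C_n)\le u_n=2\lceil\sqrt{n}\rceil\le \hat{l}_n=\lceil\sqrt{2}\,l_n\rceil-2=\lceil\sqrt{2}\,g\rceil-2<\lceil\sqrt{2}\,g\rceil,
\]
where the middle inequality is precisely Proposition~\ref{prop:triangle_square}. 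By the monotonicity principle, differing at $\iota(C_n)$ forces differing at the larger genus $\lceil\sqrt{2}\,g\rceil$, giving $T^{(\lceil\sqrt{2}\,g\rceil)}(S_{4n})\neq T^{(\lceil\sqrt{2}\,g\rceil)}(S'_{4n})$ and in particular $S_{4n}\not\simeq S'_{4n}$. The small genera not produced by $l_n=g$ (which yields $g\ge 4$) I would treat by hand: $g=3$ already follows from the family at $n=3$, whose pair agrees through genus $3$ and separates at genus $4\le\lceil\sqrt{2}\cdot 3\rceil=5$; $g=1$ follows from the pair $R_{2n},Q_{2n}$ of Subsection~\ref{subsec:4.1}, whose $T^{(1)}$ coincide while $T^{(2)}$ differ; and the single value $g=2$, which would require a pair separating at genus $3$ and hence lies outside this family (since $\iota(C_n)\ge 4$ for every $n\ge 3$), I would supply by an explicit small example.

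The step I expect to be the genuine obstacle is forcing the separating genus down to the factor $\sqrt{2}$: this is the content of Proposition~\ref{prop:triangle_square}, and it amounts to pitting the triangular-number agreement bound $\binom{g-2}{2}+2\ge n$, which makes $l_n$ grow like $\sqrt{2n}$, against the explicit $2\lceil\sqrt{n}\rceil$-set intersecting cover of $C_n$ constructed in Theorem~\ref{thm:iota_sup}; the ratio of $2\sqrt{n}$ to $\sqrt{2n}$ is exactly $\sqrt{2}$. The delicate part is not this heuristic but the control of all the nested ceilings so that the displayed chain of integer inequalities closes with no slack, together with the routine but necessary verification that $l_n$ really is onto the integers $\ge 4$ so that no target $g$ is skipped, leaving only the handful of small residual cases to be settled directly.
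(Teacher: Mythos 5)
Your proposal follows the paper's own route exactly: the paper's entire proof of this theorem is the sentence ``It is clear from Theorem \ref{thm:iota_inf}, Theorem \ref{thm:iota_sup}, and Proposition \ref{prop:triangle_square}'', and the chain you assemble --- agreement at genus $g$ from Proposition \ref{prop:g} by choosing $n$ with $l_n=g$, separation at genus $\iota(C_n)\le u_n\le \hat{l}_n=\lceil\sqrt{2}\,g\rceil-2$, and upward propagation of the inequality via the genus-lowering specialization --- is precisely the argument the paper leaves implicit. You in fact supply details the paper omits: the monotonicity principle (stated with the correct reading that \emph{all} variables carrying the new index are set to $2$), the verification that $l_n$ increases by unit steps and so attains every integer $\ge 4$, and an explicit treatment of small $g$.

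There is, however, one genuine gap, and you name it yourself without closing it: the case $g=2$. The theorem there demands matroids with $T^{(2)}(M)=T^{(2)}(N)$ and $T^{(3)}(M)\neq T^{(3)}(N)$. As you correctly observe, the pair $(S_{4n},S'_{4n})$ cannot serve, because Proposition \ref{prop:g} forces $T^{(3)}(S_{4n})=T^{(3)}(S'_{4n})$ for every admissible $n$ (the threshold $l_n$ is always at least $4$), and the pair $(R_{2n},Q_{2n})$ cannot serve either, since its genus-$2$ polynomials already differ. Deferring this case to ``an explicit small example'' is not a proof: no such example appears anywhere in the paper, and producing one requires a new construction rather than a new combination of the cited results, so your argument as written establishes the statement only for $g\neq 2$. (Notably, the paper's one-line proof suffers from the same defect, so you have in effect exposed a gap in the paper rather than merely left one in your own write-up.) Two smaller cautions: your assertion that the set of genera on which the pair differs is \emph{exactly} the up-set $\{g'\ge\iota(C_n)\}$ is not justified by the paper's results (nothing is known strictly between $l_n$ and $\iota(C_n)$), though you never use it; and invoking Proposition \ref{prop:g} at its boundary together with Theorem \ref{thm:iota_inf} is delicate, since for $n=3$ (your $g=3$ case) these two results literally contradict each other at genus $4$, where $\iota(C_3)=u_3=l_3=4$ --- your instance survives only because you invoke agreement at genus $3$, but this tension signals an off-by-one in Proposition \ref{prop:g} that a careful write-up should resolve rather than inherit.
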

\begin{proof} 
It is clear from Theorem \ref{thm:iota_inf}, Theorem \ref{thm:iota_sup}, and Proposition \ref{prop:triangle_square}.   
\end{proof}

\section{Concluding remarks}\label{sec:rem}

\subsection{Special values of $T^{(2)}$} 
\begin{thm}\label{thm:special}
We have 
\begin{align*}
T(M;x,y)
&=T^{(2)}(M;2,2,0,x,2,2,0,y)\\
&=T^{(2)}(M;2,2,x,0,2,2,y,0)\\
&=T^{(2)}(M;2,2,0,x,2,2,y,0).
\end{align*}
\end{thm}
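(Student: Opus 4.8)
The plan is to substitute the prescribed arguments directly into the defining double sum for $T^{(2)}$ and then reorganize the result according to the union or the intersection of the two index sets. The first simplification is immediate: setting $x_1=x_2=y_1=y_2=2$ turns each of the factors $(x_1-1)^{\bullet},(x_2-1)^{\bullet},(y_1-1)^{\bullet},(y_2-1)^{\bullet}$ into $1$, because $(2-1)^{k}=1$ for every exponent $k\ge 0$. Thus in all three identities only the four ``$\cap$'' and ``$\cup$'' factors survive, and each identity becomes the evaluation of a sum over pairs $(A_1,A_2)$ of subsets of $E$ in which the genuine variables $x,y$ sit on one of the two index sets while the other two factors, specialized at $0$, contribute only powers of $-1$.

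For the first identity I would group the pairs $(A_1,A_2)$ by their union $S:=A_1\cup A_2$. After substitution the summand is
\begin{align*}
(-1)^{\rho(E)-\rho(A_1\cap A_2)}(x-1)^{\rho(E)-\rho(S)}(-1)^{|A_1\cap A_2|-\rho(A_1\cap A_2)}(y-1)^{|S|-\rho(S)},
\end{align*}
where the two factors depending on $A_1\cap A_2$ collapse to the single sign $(-1)^{|A_1\cap A_2|}$, since the two copies of $\rho(A_1\cap A_2)$ cancel modulo $2$. For a fixed $S$ the pairs with $A_1\cup A_2=S$ are parametrized by assigning to each element of $S$ one of the three states ``in $A_1$ only'', ``in $A_2$ only'', ``in both'', and the resulting signed count factorizes elementwise as $\prod_{e\in S}(1+1-1)=1$. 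This elementary cancellation is the combinatorial heart of the argument: it replaces the double sum by $\sum_{S\subseteq E}(x-1)^{\rho(E)-\rho(S)}(y-1)^{|S|-\rho(S)}=T(M;x,y)$, up to a global power of $-1$ pulled out of the sum. The second identity is handled dually, grouping by the intersection $B:=A_1\cap A_2$; now the union factors reduce to the sign $(-1)^{|A_1\cup A_2|}$, the fibre over a fixed $B$ is governed by a three-state choice on $E\setminus B$ in which each element contributes a factor $-1$, and the sum again reduces to $T(M;x,y)$ up to a global sign that is constant in $B$.

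The third identity is the genuinely mixed case, since $x$ multiplies a union-indexed factor while $y$ multiplies an intersection-indexed one, so no single reindexing variable controls both exponents at once. I would either reindex simultaneously over the pair $(B,S)=(A_1\cap A_2,\,A_1\cup A_2)$ with $B\subseteq S$, showing that the inner signed count over pairs realizing a prescribed $(B,S)$ telescopes, or deduce it from the first two identities together with the duality $R^{(2)}(M^{*};x,y)=R^{(2)}(M;y,x)$, under which rank and corank are exchanged and complementation swaps $\cap$ with $\cup$. The step I expect to be the main obstacle is exactly the parity bookkeeping hidden in these ``global signs'': one must check that in each fibre the two occurrences of $\rho(\cdot)$ in the $0$-specialized factors cancel modulo $2$, track the residual powers of $-1$ coming from $(-1)^{\rho(E)-\rho(\cdot)}$ and $(-1)^{|\cdot|-\rho(\cdot)}$, and verify that the leftover global parity attached to each fibre is independent of the summation variable, so that it may be pulled outside the sum. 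Once these parities are pinned down, each of the three equalities follows from the three-state (respectively two-state) signed count above.
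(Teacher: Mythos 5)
Your fibering strategy---grouping the pairs $(A_1,A_2)$ by $S=A_1\cup A_2$ (resp.\ $B=A_1\cap A_2$) and evaluating a per-element three-state signed count---is a valid way to collapse the double sum, and it is genuinely different from the paper's route: the paper instead fixes $A_1$ and argues that the inner sum over $A_2$ vanishes whenever $A_1\neq\emptyset$, so that only the slice $A_1=\emptyset$ survives. Either scheme reduces the evaluation to $T(M;x,y)$ times a global sign. The problem is precisely the step you defer (``track the residual powers of $-1$ \ldots\ verify that the leftover global parity \ldots\ may be pulled outside the sum''): that bookkeeping is not a formality, it is where the statement as printed breaks down. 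Carrying out your own computation, the surviving sign in the first specialization is $(-1)^{\rho(E)+|A_1\cap A_2|}$, each fibre over $S$ contributes $\prod_{e\in S}(1+1-1)=1$, and one gets
\[
T^{(2)}(M;2,2,0,x,2,2,0,y)=(-1)^{\rho(E)}\,T(M;x,y),
\]
while in the second specialization the fibre over $B$ contributes $(-1)^{|B|}(-1)^{|E\setminus B|}=(-1)^{|E|}$, giving $(-1)^{\rho(E)+|E|}\,T(M;x,y)$. These global signs are indeed constant in the summation variable, exactly as you predicted, but they are not $+1$ in general, so the claimed equalities fail: for $M=U_{1,1}$ one has $T(M;x,y)=x$, whereas a four-term hand computation gives $T^{(2)}(M;2,2,0,x,2,2,0,y)=-x$ and $T^{(2)}(M;2,2,0,x,2,2,y,0)=-x$. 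So the proposal, completed honestly, refutes rather than proves the theorem as stated; ``once these parities are pinned down, each of the three equalities follows'' is the gap.

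You should know that the paper's own proof has the identical defect, so you could not have matched it by any correct argument: after its cancellation claim for $A_1\neq\emptyset$, the surviving $A_1=\emptyset$ terms carry the factor $(-1)^{\rho(E)-|A_1\cap A_2|}=(-1)^{\rho(E)}$, and their sum is $(-1)^{\rho(E)}T(M;x,y)$; this sign is silently dropped when the proof concludes. (The paper's intermediate pairing step is also garbled: it compares $|A_1\cup A_2|$ with $|A_1\cup(A_1\cup A_2)|$, which are equal sets.) The honest conclusion of your bookkeeping is a corrected statement, e.g.\ $T(M;x,y)=(-1)^{\rho(E)}T^{(2)}(M;2,2,0,x,2,2,0,y)$ and $T(M;x,y)=(-1)^{\rho(E)+|E_M|}T^{(2)}(M;2,2,x,0,2,2,y,0)$, rather than the theorem as printed. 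One further caution: your fallback of deducing the third (mixed) identity from the first two via duality does not work, since dualization sends the mixed specialization to a mixed one again with $x$ and $y$ exchanged; your fibration over the pairs $(B,S)$ with multiplicity $2^{|S|-|B|}$ is the workable route there, and it again produces the extra factor $(-1)^{\rho(E)}$.
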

\begin{proof}
We show the first identity
$T(M;x,y)
=T^{(2)}(M;2,2,0,x,2,2,0,y)$. 
The other cases can be proved similarly. 
The right-hand side is written as follows:
\begin{align*}
T^{(2)}&(M;2,2,0,x,2,2,0,y)\\
&=\sum_{A_1,A_2\subset E}
(-1)^{\rho E-\rho (A_1\cap A_2)}
(x-1)^{\rho E-\rho (A_1\cup A_2)}
(-1)^{ |A_1\cap A_2|-\rho (A_1\cap A_2)}
(y-1)^{ |A_1\cup A_2|-\rho (A_1\cup A_2)}\\
&=
\sum_{A_1,A_2\subset E}
(-1)^{\rho E- |A_1\cap A_2|}
(x-1)^{\rho E-\rho (A_1\cup A_2)}
(y-1)^{ |A_1\cup A_2|-\rho (A_1\cup A_2)}. 
\end{align*}
We claim that if we fix $A_1\neq \emptyset$ then
\begin{align*}
\sum_{A_1,A_2\subset E}
(-1)^{\rho E- |A_1\cap A_2|}
(x-1)^{\rho E-\rho (A_1\cup A_2)}
(y-1)^{ |A_1\cup A_2|-\rho (A_1\cup A_2)}=0. 
\end{align*}
In fact, 
if $|A_1|$ is odd then for fixed $A_2$ the terms $(A_1,A_2)$ and 
$(A_1,A_1\cup A_2)$ have opposite signature since 
\begin{align*}
|A_1\cup A_2|\not\equiv |A_1\cup (A_1\cup A_2)| \pmod{2}. 
\end{align*}
If $|A_1|$ is even then for fixed $A_2$ 
\begin{align*}
\sum_{S\subset A_1}
(-1)^{\rho E- |A_1\cap (S\cup A_2)|}
(x-1)^{\rho E-\rho (A_1\cup (S\cup A_2))}
(y-1)^{ |A_1\cup (S\cup A_2)|-\rho (A_1\cup (S\cup A_2))}=0. 
\end{align*}
It is because 
if $S_1$ is odd and $S_2$ is even 
$(A_1,S_1\cup A_2)$ and $(A_1,S_2\cup A_2)$ have opposite signature since 
\begin{align*}
|A_1\cap (S_1\cup A_2)|\not\equiv |A_1\cap (S_2\cup A_2)| \pmod{2}. 
\end{align*}
This completes the proof of Theorem \ref{thm:special}. 

\end{proof}
By Theorem \ref{thm:Greene}, we have the following 
corollary: 
\begin{cor}
Let $M$ be a vector matroid over $\FF_q$, and $C_M$ be the corresponding code. 
Then 
\begin{align*}
w_{C_M}(x,y)
&=T^{(2)}\left(M;2,2,0,\frac{x+(q-1)y}{x-y},2,2,0,\frac{x}{y}\right)\\
&=T^{(2)}\left(M;2,2,\frac{x+(q-1)y}{x-y},0,2,2,\frac{x}{y},0\right)\\
&=T^{(2)}\left(M;2,2,0,\frac{x+(q-1)y}{x-y},2,2,\frac{x}{y},0\right). 
\end{align*}

\end{cor}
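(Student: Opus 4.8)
The plan is to obtain the corollary directly, by feeding the Greene point into the three specialization identities of Theorem~\ref{thm:special}. First I would invoke Theorem~\ref{thm:Greene} (Greene's theorem): for a vector matroid $M$ over $\FF_q$ with associated code $C_M$, it expresses the weight enumerator as the ordinary Tutte polynomial evaluated at the Greene point,
\[
w_{C_M}(x,y)=T\!\left(M;\ \frac{x+(q-1)y}{x-y},\ \frac{x}{y}\right).
\]
This reduces the whole corollary to rewriting a single evaluation of the ordinary Tutte polynomial as an evaluation of $T^{(2)}$.

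Next I would apply Theorem~\ref{thm:special}, which gives three identities expressing $T(M;X,Y)$ as a specialization of the genus-$2$ Tutte polynomial, namely
\[
T(M;X,Y)=T^{(2)}(M;2,2,0,X,2,2,0,Y)=T^{(2)}(M;2,2,X,0,2,2,Y,0)=T^{(2)}(M;2,2,0,X,2,2,Y,0).
\]
Because these are identities in the formal Tutte variables $X$ and $Y$, they persist under substitution of any expressions for $X$ and $Y$. Substituting $X=\frac{x+(q-1)y}{x-y}$ and $Y=\frac{x}{y}$ into each of the three versions, and combining with the Greene identity above, produces exactly the three displayed equalities of the corollary.

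The step requires essentially no new work: once Theorems~\ref{thm:Greene} and~\ref{thm:special} are available, the corollary is a formal substitution. The only point to keep in mind is that Theorem~\ref{thm:special} is an identity that holds identically in the Tutte variables, so it specializes freely; the Greene arguments $\frac{x+(q-1)y}{x-y}$ and $\frac{x}{y}$ are rational functions of $x,y$, well defined for $x\neq y$ and $y\neq 0$, and the identities are read as equalities of rational functions (equivalently, after clearing the denominators $(x-y)$ and $y$). Thus there is no genuine obstacle; the only thing to verify is the bookkeeping that, in each of the three specializations, setting the four singleton variables $x_1,x_2,y_1,y_2$ to $2$ kills their $(\,\cdot-1\,)$ factors while the remaining $\cap$/$\cup$ arguments reproduce precisely the Greene point, which is immediate from Theorem~\ref{thm:special}.
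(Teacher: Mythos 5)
Your route is exactly the paper's: the corollary appears immediately after Theorem~\ref{thm:special} with the one-line justification ``By Theorem~\ref{thm:Greene}, we have the following corollary,'' i.e.\ it is obtained precisely by substituting the Greene point into the three specialization identities, which is what you do. As far as the structure of the argument goes, there is nothing different to compare; your observation that Theorem~\ref{thm:special} is an identity in the two free Tutte variables and hence specializes freely to rational functions of $x,y$ is the whole content of the deduction.

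There is, however, one genuine discrepancy you should not gloss over: your first display misquotes Theorem~\ref{thm:Greene}. The theorem as stated in the paper is not
\[
w_{C_M}(x,y)=T\left(M;\frac{x+(q-1)y}{x-y},\frac{x}{y}\right),
\]
but rather
\[
w_{C_M}(x,y)=y^{\,n-k}\,(x-y)^{k}\;T\left(M;\frac{x+(q-1)y}{x-y},\frac{x}{y}\right),
\qquad k=\dim C_M,\ n=|E_M|,
\]
with the prefactor $y^{\,n-k}(x-y)^{k}$. Combining the theorem as actually stated with Theorem~\ref{thm:special} yields
\[
w_{C_M}(x,y)=y^{\,n-k}(x-y)^{k}\;T^{(2)}\left(M;2,2,0,\tfrac{x+(q-1)y}{x-y},2,2,0,\tfrac{x}{y}\right),
\]
and similarly for the other two specializations; the prefactor does not cancel. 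So either the corollary is to be read with this prefactor understood (the paper's own statement omits it, which appears to be a slip in the paper, since as written the corollary contradicts the paper's Theorem~\ref{thm:Greene}), or your citation of Greene's theorem must be corrected. As written, your argument reproduces the displayed identity only because you dropped the prefactor in exactly the way needed; the substitution step itself is sound, but the invocation of Theorem~\ref{thm:Greene} is not the theorem the paper states.
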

Using $(6)\sim (8)$ in {\cite[p.268]{Welsh}}, we have the following 
corollary: 
\begin{cor}
Let $M$ be a vector matroid. 
Then 
\begin{align*}
&\hspace{-30pt}\mbox{number of basis of $M$}\\
&=T^{(2)}(M;2,2,0,1,2,2,0,1)\\
&=T^{(2)}(M;2,2,1,0,2,2,1,0)\\
&=T^{(2)}(M;2,2,0,1,2,2,1,0), \\
&\hspace{-30pt}\mbox{number of independent sets of $M$}\\
&=T^{(2)}(M;2,2,0,2,2,2,0,1)\\
&=T^{(2)}(M;2,2,2,0,2,2,1,0)\\
&=T^{(2)}(M;2,2,0,2,2,2,1,0), \\
&\hspace{-30pt}\mbox{number of spanning sets of $M$}\\
&=T^{(2)}(M;2,2,0,1,2,2,0,2)\\
&=T^{(2)}(M;2,2,1,0,2,2,2,0)\\
&=T^{(2)}(M;2,2,0,1,2,2,2,0).
\end{align*}

\end{cor}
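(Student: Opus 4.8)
The plan is to obtain every one of the nine identities by combining the three substitution formulas of Theorem~\ref{thm:special} with the classical univariate evaluations of the ordinary Tutte polynomial. First I would recall the three standard specializations recorded as $(6)$--$(8)$ in \cite[p.268]{Welsh}: for any matroid $M$, the number of bases equals $T(M;1,1)$, the number of independent sets equals $T(M;2,1)$, and the number of spanning sets equals $T(M;1,2)$. (The vector-matroid hypothesis in the statement is inessential for this argument, since these three evaluations hold for every matroid.)

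Next I would feed each of these three evaluation points into the three identities for $T(M;x,y)$ supplied by Theorem~\ref{thm:special}. Substituting $(x,y)=(1,1)$ into the three expressions produces the three genus-$2$ formulas for the number of bases; substituting $(x,y)=(2,1)$ produces the three formulas for the number of independent sets; and substituting $(x,y)=(1,2)$ produces the three formulas for the number of spanning sets. Matching the resulting argument lists against those asserted in the corollary is then a direct check—for instance, the first identity of Theorem~\ref{thm:special} sends $T(M;1,1)$ to $T^{(2)}(M;2,2,0,1,2,2,0,1)$, which is exactly the first displayed expression for the number of bases.

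Since Theorem~\ref{thm:special} is already in hand, there is no genuine obstacle here: the entire content is the observation that each of the three counting functions is a single-point evaluation of the ordinary Tutte polynomial, after which the corollary is immediate. The only care required is clerical—tracking which of the eight genus-$2$ arguments receives $0$, $1$, or $2$ in each of the nine cases, and confirming that the positions of the $0$'s (sitting in the $x_{\cap}$ or $x_{\cup}$ slots) agree with the form of Theorem~\ref{thm:special} being invoked.
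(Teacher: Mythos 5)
Your proposal is correct and coincides with the paper's own (implicit) proof: the paper derives this corollary by combining Theorem~\ref{thm:special} with the standard evaluations $(6)$--$(8)$ of Welsh, namely that the numbers of bases, independent sets, and spanning sets are $T(M;1,1)$, $T(M;2,1)$, and $T(M;1,2)$, exactly as you do. Your remark that the vector-matroid hypothesis is inessential is also accurate.
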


\subsection{Witt's problem from the viewpoint of the Tutte polynomials of genus $g$}

In coding theory, 
it is called Witt's problem that for the Type II codes 
of length $8n$, to determine the minimum number $g$ 
that the genus $g$ weight enumerators of those codes 
are linearly independent. 
For the case $n=1$, there is a unique Type II code. Therefore, 
the crucial problems are the cases $n\geq 2$. 
The answer is known for the cases $n=2,3,$ and $4$. 
For $n=2,3,$ and $4$, the answer is $3,6,$ and $10$, respectively (\cite{Nebe}). 

In this section, 
we study Witt's problem from the viewpoint of Tutte polynomial of genus $g$, 
namely to determine the minimum number $g$ that the genus $g$ Tutte polynomial 
of those codes are linearly independent. 
The results are as follows: 

\begin{thm}
For $n=2$, 
the genus one Tutte polynomial $T^{(1)}$
of Type II codes of length $8n$ are linearly independent. 
For $n=3$, 
the genus one Tutte polynomial $T^{(1)}$
of Type II codes of length $8n$ are linearly dependent. 
\end{thm}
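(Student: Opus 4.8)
The plan is to pass from codes to their $\FF_2$-vector matroids and to work with the ordinary Tutte polynomial, since $T^{(1)}(M)=T(M;x,y)$. To a Type II code $C$ of length $8n$ (doubly-even and self-dual, so $\dim C=4n$) I associate the vector matroid $M_C$ of a generator matrix; because $C=C^{\perp}$ the matroid is self-dual, so $T(M_C;x,y)=T(M_C;y,x)$. First I would record the classification that feeds the computation: there are exactly two inequivalent Type II codes of length $16$, namely $e_8\oplus e_8$ and the indecomposable code $d_{16}^{+}$, and exactly nine of length $24$ (the Pless--Sloane list). The whole statement then reduces to a question of linear (in)dependence of finitely many polynomials $T(M_C;x,y)$ inside the finite-dimensional space of bivariate polynomials.

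For $n=2$ I would isolate a clean reduction: any two \emph{distinct} Tutte polynomials of matroids on the same $16$-element ground set are automatically linearly independent. Indeed $T(M;2,2)=2^{|E|}=2^{16}$ for every matroid on $16$ elements, so a relation $\alpha T_1+\beta T_2=0$ forces $\alpha+\beta=0$ upon evaluation at $(2,2)$, whence $\alpha(T_1-T_2)=0$ and therefore $T_1=T_2$ unless $\alpha=\beta=0$. So it suffices to show the two length-$16$ codes have different Tutte polynomials, and the beta invariant (the coefficient of $x$, equivalently of $y$, in $T$) settles this conceptually: $e_8\oplus e_8$ is a direct sum, its matroid is separable, and the standard fact that the beta invariant vanishes on a direct sum of non-empty matroids gives beta $=0$; whereas $d_{16}^{+}$ is indecomposable, so its matroid is non-separable and loopless and coloopless, giving beta $>0$. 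Hence $T(M_{e_8\oplus e_8})\neq T(M_{d_{16}^{+}})$, and the two are linearly independent.

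For $n=3$ I expect no such shortcut and would proceed computationally. The plan is to take explicit generator matrices for the nine Type II codes of length $24$, build the nine vector matroids $M_1,\dots,M_9$ over $\FF_2$, compute the nine ordinary Tutte polynomials $T(M_i;x,y)$ by deletion--contraction in {\sc Magma} rather than by the $2^{24}$-term rank-generating sum, and assemble their coefficient vectors in the monomial basis $\{x^iy^j\}$. Linear dependence is then certified by exhibiting a single nonzero rational vector $(c_1,\dots,c_9)$ with $\sum_i c_i\,T(M_i;x,y)=0$, equivalently by showing the coefficient matrix with these nine rows has rank at most $8$. Note that the $(2,2)$-evaluation already forces $\sum_i c_i=0$, a useful consistency check on any candidate relation.

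The main obstacle is the $n=3$ case, and two points need care. First, the computation must be \emph{exact}: the Tutte polynomials should be computed over $\ZZ$ and the dependence verified in exact arithmetic, since a purely numerical rank could either mask or invent a relation. Second, one must be certain the nine matroids are pairwise inequivalent representatives of \emph{all} Type II codes of length $24$; here I would rely on the published classification and independently recheck the defining invariants (length, self-duality, divisibility of all weights by $4$, and the Hamming weight enumerators). Once the nine Tutte polynomials are in hand, extracting the explicit null vector is routine linear algebra, so the difficulty lies entirely in performing and certifying the length-$24$ Tutte computations.
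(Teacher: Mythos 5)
Your proposal is correct, and it is worth separating its two halves, because they compare differently with the paper. The paper treats both cases identically and purely computationally: it brute-forces the genus-one Tutte polynomials from the defining subset sum in {\sc Magma}/{\sc Mathematica}, taking the generator matrices from the cited databases, and displays neither the polynomials nor the dependence relation. Your $n=3$ plan is essentially that same computation (your extra care about exact arithmetic, certifying the dependence by an explicit rational null vector, and the consistency check $\sum_i c_i=0$ forced by $T(M;2,2)=2^{24}$ are sensible refinements, not a different method), so there you and the paper coincide, and like the paper you ultimately lean on a machine computation that is referenced rather than exhibited. Your $n=2$ argument, by contrast, is a genuinely different and computation-free route: the classification (only $e_8\oplus e_8$ and $d_{16}^{+}$ in length $16$) plus the observation that $T(M;2,2)=2^{16}$ for every matroid on a $16$-element ground set reduces linear independence of two Tutte polynomials to their being \emph{distinct}, and the beta invariant separates them, vanishing for the separable matroid of $e_8\oplus e_8$ (both factors are non-empty, so the product of their Tutte polynomials has no linear term) and positive for the matroid of the indecomposable code $d_{16}^{+}$ by Crapo's theorem. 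For that half to be fully self-contained you should make explicit the standard facts you invoke: that indecomposability of a self-dual code is equivalent to non-separability (connectedness) of its vector matroid, and that $d_{16}^{+}$ is indecomposable (e.g.\ any decomposition would have to be into length-$8$ Type II summands, forcing $e_8\oplus e_8$). What your route buys is a human-checkable, conceptual proof of the $n=2$ independence; what the paper's uniform brute force buys is the explicit polynomials as data and a single method covering both cases, including the $n=3$ dependence where no such conceptual shortcut is apparent.
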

\begin{proof}
We perform brute-force enumeration based 
on the definition by using 
{\sc Magma} \cite{Magma} and 
{\sc Mathematica} \cite{Mathematica}. 
The data is given in \cite{codes} and \cite{Miezaki}. 
\end{proof}

\begin{rem}
\begin{itemize}

\item 
The classification of Type II codes of length $8n$ is known for $n\leq 5$. 
It is an interesting problem to determine whether the genus g Tutte polynomials of those codes are linearly independent.


\item 

In \cite{Greene}, 
a relationship between 
the weight enumerators of codes and the Tutte polynomials of matroids was established. 
In this remark, 
we review this relationship to explain the term \textit{genus}. 

Let $M$ be a vector matroid obtained from the $k\times n$ matrix $A$. 
Then the row space of $A$ is an $[n,k]$ code over $\FF_q$, namely 
a $k$-dimensional subspace of of $\FF_q^n$. We denote such a code by $C_M$. 
The weight enumerator $w_C(x,y)$ of the code $C$ is the homogeneous polynomial
\[
w_C(x,y) = 
\sum_{c\in C}
x^{n-\wt(c)} y^{wt(c)} =
\sum_{i=0}^n A_i x^{n-i}y^i,
\]
where $A_i=\sharp\{i\mid c=(c_1,\ldots,c_n)\in C,c_i\neq 0\}$. 
The Tutte polynomial of a vector matroid $M$ 
and the weight enumerator of $C_M$ have the following relation:
\begin{thm}[\cite{Greene}]\label{thm:Greene}
Let $M$ be a vector matroid on a set $E=\{1,\ldots,n\}$ over $\FF_q$. 
Then
\[
w_{M_C}(x_1,x_2)=
x_2^{n-\dim(M_C)}(x_1-x_2)^{\dim(M_C)}
T\left(M_C; \frac{x_1+(q-1)x_2}{x_1-x_2},\frac{x_1}{x_2}\right). 
\]
\end{thm}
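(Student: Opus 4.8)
The plan is to prove this identity by expanding both sides into the \emph{same} sum over subsets $A\subseteq E$ weighted by the matroid rank function $\rho$. Since this is Greene's classical theorem, the natural route is to pass through the rank-generating form of the Tutte polynomial and to match it against a coordinate-wise expansion of the weight enumerator.

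First I would substitute the given arguments $x=\tfrac{x_1+(q-1)x_2}{x_1-x_2}$ and $y=\tfrac{x_1}{x_2}$ directly into the definition $T(M;x,y)=\sum_{A\subseteq E}(x-1)^{\rho(E)-\rho(A)}(y-1)^{|A|-\rho(A)}$. The purpose of these particular substitutions is that they simplify cleanly: $x-1=\tfrac{qx_2}{x_1-x_2}$ and $y-1=\tfrac{x_1-x_2}{x_2}$. Writing $k:=\rho(E)=\dim C_M$ and multiplying through by the prefactor $x_2^{\,n-k}(x_1-x_2)^{k}$, the powers of $x_1-x_2$ and of $x_2$ recombine so that every occurrence of $\rho(A)$ cancels except inside a factor $q^{\,k-\rho(A)}$, leaving
\[
x_2^{\,n-k}(x_1-x_2)^{k}\,T\!\left(M;\tfrac{x_1+(q-1)x_2}{x_1-x_2},\tfrac{x_1}{x_2}\right)=\sum_{A\subseteq E} q^{\,k-\rho(A)}\,(x_1-x_2)^{|A|}\,x_2^{\,n-|A|}.
\]
This step is purely mechanical bookkeeping of exponents.

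Next I would expand the weight enumerator independently. Writing $w_{C_M}(x_1,x_2)=\sum_{c\in C_M}\prod_{i\in E}f(c_i)$ with $f(0)=x_1$ and $f(a)=x_2$ for $a\neq 0$, I substitute $f(c_i)=x_2+(x_1-x_2)\,[c_i=0]$ in each coordinate and multiply out. This gives $\sum_{c}\sum_{B\subseteq E}(x_1-x_2)^{|B|}x_2^{\,n-|B|}\,[\,\supp(c)\cap B=\emptyset\,]$; swapping the order of summation turns the inner sum into the number of codewords whose support avoids $B$, that is, codewords supported on $E\setminus B$.

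The one genuine input is the matroid--code dictionary: the number of codewords supported inside a set $S$ equals $q^{\,\dim C_M-\rho(E\setminus S)}$, because such codewords correspond to message vectors orthogonal to the columns of the generator matrix indexed by $E\setminus S$, a space of dimension $k-\rho(E\setminus S)$. Applying this with $S=E\setminus B$ produces the count $q^{\,k-\rho(B)}$, so the weight-enumerator expansion becomes exactly $\sum_{B\subseteq E}q^{\,k-\rho(B)}(x_1-x_2)^{|B|}x_2^{\,n-|B|}$, agreeing term by term with the Tutte side and finishing the proof. I expect this dimension-count lemma — in particular keeping straight which rank ($\rho(E\setminus S)$ rather than $\rho(S)$) appears — to be the only conceptual obstacle; everything else reduces to formal manipulation of the two expansions into a common normal form.
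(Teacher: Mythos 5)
Your proof is correct, but there is nothing in the paper to compare it against: the paper states this result purely as a quotation of Greene's theorem (citing Greene's 1976 article) and supplies no proof of its own. Your argument is the standard proof of that classical result, and each step checks out. The substitutions do give $x-1=\frac{qx_2}{x_1-x_2}$ and $y-1=\frac{x_1-x_2}{x_2}$, so after multiplying by $x_2^{\,n-k}(x_1-x_2)^{k}$ (with $k=\rho(E)=\dim C_M$) the Tutte side collapses, by the exponent bookkeeping you describe, to $\sum_{A\subseteq E}q^{\,k-\rho(A)}(x_1-x_2)^{|A|}x_2^{\,n-|A|}$. The coordinate-wise expansion $f(c_i)=x_2+(x_1-x_2)[c_i=0]$ followed by exchanging the order of summation is valid and yields $\sum_{B\subseteq E}(x_1-x_2)^{|B|}x_2^{\,n-|B|}\,\#\{c\in C_M:\supp(c)\cap B=\emptyset\}$. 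Your key lemma is also the right one, and you identify correctly the subtlety of which rank appears: the codewords vanishing on $B$ form the kernel of the projection of $C_M$ onto the coordinates in $B$, whose image has dimension $\rho(B)$, so their number is $q^{\,k-\rho(B)}$, matching the Tutte side term by term. One small remark: your justification via ``message vectors orthogonal to the columns indexed by $E\setminus S$'' tacitly assumes the generator matrix has full row rank $k$, so that messages biject with codewords; this is harmless, since one can delete redundant rows without changing either the code or the matroid, or simply argue via the projection/rank--nullity formulation just given, which needs no such assumption.
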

A generalization of the weight enumerator is known as 
the weight enumerator of genus $g$: 
\[
w_C^{(g)}(x_a:a\in \FF_2^g)=\sum_{v_1,\ldots,v_g\in C}
\prod_{a\in \FF_2^g}x_a^{n_a(v_1,\ldots,v_g)}, 
\]
where $n_a(v_1,\ldots,v_g)$ denotes the number 
of $i$ such that $a=(v_{1i},\ldots,v_{gi})$. 
This gives rise to a natural question: is there
a generalization of the Tutte polynomial that relates 
the complete weight enumerator $w_C^{(g)}(x_a:a\in \FF_2^g)$?
We believe that the Tutte polynomials of genus $g$ is 
a candidate generalization that answers this. 
Therefore, we use the term \textit{genus}. 
So far, we do not have any relations between 
the complete weight enumerator of genus $g$ and 
the Tutte polynomials of genus $g$. 

\end{itemize}
\end{rem}

\subsection{Future problems}

We conclude this paper by stating a few open problems.

\begin{enumerate}
\item 

Following the same policy as in the proof of the main theorem of this paper,
we believe that the following conjecture can also be proved.
\begin{conj}
Let $M$ be a non-separable matroid, let $C$ be a largest circuit of $M$, and
let $D$ be a largest cocircuit of $M$.
Then $M$ can be reconstructed from its Tutte
polynomial of genus $|\mathcal{B}(M)|-\max\{|C|,|D|\}+3$. 
\end{conj}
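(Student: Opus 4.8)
The plan is to follow the proof of Theorem \ref{genus_base_number} while encoding a largest circuit \emph{implicitly}, so that a whole cluster of mutually determined bases need not each occupy a slot. First I would reduce to the essential case. Because $R^{(g)}(M^{*};x,y)=R^{(g)}(M;y,x)$ and duality exchanges circuits with cocircuits while preserving both $|\mathcal{B}(M)|$ and $\max\{|C|,|D|\}$, we may assume that the maximum is attained by a circuit, i.e.\ $\max\{|C|,|D|\}=|C|$. As in Section \ref{sec:3}, loops and coloops are detected by $T^{(1)}$ and each $R^{(g)}$ factorizes over the components, so it suffices to treat a non-separable, loopless, coloopless matroid $M$ possessing a circuit of at least three elements (otherwise Lemma \ref{lem:circuit} already yields $T^{(1)}$-uniqueness).

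Next I would fix a largest circuit $C$ and an anchoring base: extending the independent set $C\setminus\{c\}$ to a base gives a base $B$ with $|C\setminus B|=1$, and by Lemma \ref{lem:distance1} the family $\mathcal{F}=\{(B\cup C)\setminus\{p\}\mid p\in C\}$ of $|C|$ bases is determined by the pair $(B,C)$ alone. This is the source of the saving: in Theorem \ref{genus_base_number} these $|C|$ cluster bases would each consume a slot, whereas here the circuit together with a bounded number of anchors suffices. Concretely, with $g':=|\mathcal{B}(M)|-|C|+3$ I would guess a monomial $R^{(g')}(A_1,\dots,A_{g'})$ of $R^{(g')}(M)$ in which $A_1=C$, the sets $A_2,A_3$ are two distinct members of $\mathcal{F}$, and $A_4,\dots,A_{g'}$ enumerate the $|\mathcal{B}(M)|-|C|$ bases outside the cluster. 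The exponent tests of Section \ref{sec:3} --- vanishing of $x_{A_i},y_{A_i}$ for bases, positivity of $x_{A_i\cap A_j}$ for distinctness, and the pattern of Lemma \ref{lem:distance1} marking $A_1$ as a circuit with $|A_1\setminus A_i|=1$ --- let us verify that a candidate monomial really has this shape.

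Finally I would recover all labels. The explicit non-cluster bases $A_4,\dots,A_{g'}$, together with the partition data of Lemma \ref{pi} and the propagation Lemmata \ref{lem:detect_N(B)}, \ref{lem:common_neighbor}, \ref{lem:label_fixed} and \ref{lem:recover}, are labelled exactly as in Theorem \ref{genus_base_number}. Since $A_2$ and $A_3$ are distinct members of $\mathcal{F}$, one has $A_2\cup A_3=B\cup C$, so every cluster base appears as the explicit labelled set $(A_2\cup A_3)\setminus\{p\}$ with $p\in A_1$; thus once the anchors $A_2,A_3$ are labelled through their adjacencies to the reconstructed explicit part, the whole cluster is filled in. The count is exact --- one slot for $C$, two for the anchors, and one for each of the $|\mathcal{B}(M)|-|C|$ non-cluster bases --- which accounts for the constant $+3$ with no base left unrepresented. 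The main obstacle will be exactly this label reconstruction across the implicit cluster: treating a block of $|C|-2$ vertices of $\BG(M)$ implicitly (rather than the single missing vertex of the original argument) could sever the connectivity and destroy the nonadjacent-common-neighbour configurations on which Lemmata \ref{lem:common_neighbor}--\ref{lem:recover} depend. The heart of the proof is therefore to show that, for a \emph{largest} circuit $C$, the cluster $\mathcal{F}$ can always be situated at the boundary of the base graph so that the explicit subgraph stays connected and the anchors $A_2,A_3$ remain adjacent to it, whence the label propagation still reaches every vertex; checking the slot count then confirms the genus $|\mathcal{B}(M)|-\max\{|C|,|D|\}+3$.
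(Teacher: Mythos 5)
You should know at the outset that this statement is left as an \emph{open conjecture} in the paper's concluding remarks (the authors only say they ``believe'' it can be proved ``following the same policy'' as Theorem \ref{genus_base_number}), so there is no proof of record to compare yours against; your proposal must stand on its own, and as written it does not. The bookkeeping is fine: the duality reduction, the choice of an anchoring base $B$ with $|C\setminus B|=1$, the observation that $A_2\cup A_3=B\cup C$ so that the whole cluster $\mathcal{F}$ is determined by the labels of $A_1,A_2,A_3$, and the slot count $1+2+(|\mathcal{B}(M)|-|C|)=|\mathcal{B}(M)|-|C|+3$ are all correct. The genuine gap is the step you yourself defer to ``the heart of the proof'': recovering the labeling when the $|C|-2$ remaining cluster vertices are implicit, and this step fails concretely at its very starting point. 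The proof of Theorem \ref{genus_base_number} seeds the labeling at $B_1=A_2$ by Lemma \ref{lem:detect_N(B)}, which requires (i) the full closed neighborhood $N_{\BG(M)}[A_2]$ to be explicit and (ii) knowing which of the two partitions of Lemma \ref{pi} is $\pi'(A_2,M)$. Both requirements are met there precisely because the entire class $q^{B_1}_{c_1}=\{A_3,\dots,A_{m+1}\}$ is explicit: it has at least two elements, and by Lemma \ref{pi}(2) a set of two or more neighbors lying in one class can be a class of only one of the two partitions, which pins down $\pi'$. Your scheme deletes exactly the vertices that make this work: at your anchor $A_2$ the class $q^{A_2}_p$ equals $\mathcal{F}\setminus\{A_2\}$, of which only the single vertex $A_3$ is explicit, and a single vertex lies in one class of \emph{each} partition, so it cannot distinguish $\pi$ from $\pi'$; worse, since most of $N_{\BG(M)}(A_2)$ is implicit, the pair of partitions cannot even be computed at $A_2$, and Lemma \ref{pi}'s uniqueness statement does not apply to the induced subgraph $H$ at vertices with missing neighbors. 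Choosing the orientation wrongly reconstructs the dual labeling, and none of your exponent tests rules that out.

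Your proposed repair --- that for a \emph{largest} circuit the cluster ``can always be situated at the boundary of the base graph'' so that the explicit part stays connected --- is an unproven assertion, and connectivity is in any case not the right hypothesis: Lemmata \ref{lem:common_neighbor}, \ref{lem:label_fixed} and \ref{lem:recover} propagate labels only through pairs of \emph{nonadjacent common neighbors} that are already labelled, so you would need to exhibit a seed vertex satisfying the hypotheses of Lemma \ref{lem:detect_N(B)} with determined orientation, and then show such configurations survive the removal of $|C|-2$ vertices and reach every explicit vertex. Nothing in the proposal establishes this, and it is exactly the content of the conjecture. A symptom of the tension: the natural fix of making a third cluster member explicit (so that two explicit members of $q^{A_2}_p$ identify $\pi'$ as in the paper's proof) raises the genus to $|\mathcal{B}(M)|-|C|+4$, losing the claimed bound --- and even then the anchor's neighborhood remains mostly implicit, so Lemma \ref{lem:detect_N(B)} still does not apply there. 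In short: a reasonable plan, consistent with the authors' stated strategy, but with the decisive lemma missing.
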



  \item 
  \noindent
  \begin{prob}
        {Given an arbitrary matroid $M$, does the upper bound on the genus
  of the Tutte polynomial of $M$ needed to reconstruct $M$ belong to $\ord(|B(M)|)$? }
  \end{prob}
  \item
  In 2000, Bollob\'{a}s, Pebody and Riordan \cite{BPR} conjectured that almost
  all graphic matroids are $T^{(1)}$-unique. Related to this conjecture, let us ask
  the following question:
  \begin{prob}
  Are all graphical matroids $T^{(2)}$-unique? More generally, 
  does there exist a positive integer $N$ such that 
  all graphical matroids are $T^{(N)}$-unique or of class $T^{(N)}$? 
\end{prob}

\item
As a direct consequence of Theorem \ref{matroidsMandN}, we obtain the following proposition:
\begin{prop}
For any positive integer $g$, there exist two positive numbers $c_1$, $c_2$ and two infinite sequences 
matroids $\{ M_i\}$ and $\{ N_i\}$ such that
$T^{(g)}(M_i)=T^{(g)}(N_i)$ and 
$T^{(\lceil c_1 g +c_2 \rceil)}(M_i) \neq T^{(\lceil c_1 g +c_2\rceil)}(N_i)$.
\end{prop}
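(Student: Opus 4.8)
The plan is to treat Theorem~\ref{matroidsMandN} as a black box producing a single ``seed'' pair for the given $g$, and then to manufacture infinitely many pairs from it by taking direct sums, exploiting that $T^{(h)}$ is multiplicative over direct sums. Fix the positive integer $g$ and set $c_1:=\sqrt{2}$ and $c_2:=1$, so that $G:=\lceil c_1 g+c_2\rceil=\lceil\sqrt{2}\,g\rceil+1$. By Theorem~\ref{matroidsMandN} there is a pair $(M_0,N_0)$ with $T^{(g)}(M_0)=T^{(g)}(N_0)$ and $T^{(\lceil\sqrt{2}\,g\rceil)}(M_0)\neq T^{(\lceil\sqrt{2}\,g\rceil)}(N_0)$; concretely one may take $M_0=S_{4n}$ and $N_0=S'_{4n}$ for a suitable $n$, so that $M_0$ and $N_0$ share the ground set $\ZZ_{4n}$.

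Before building the family I would record three elementary facts. First, \emph{multiplicativity}: as recalled at the start of Section~\ref{sec:3}, $T^{(h)}(M)$ factors as the product of the genus $h$ Tutte polynomials of the components of $M$; hence for any matroids $A,B$ one has $T^{(h)}(A\dot{\cup}B)=T^{(h)}(A)\,T^{(h)}(B)$, an identity in the polynomial ring, which is an integral domain. Second, \emph{nonvanishing}: $T^{(h)}(K)$ is a nonzero polynomial for every matroid $K$, for otherwise the reduction formula relating $T^{(h-1)}$ to $T^{(h)}$ would force $T^{(1)}(K)=0$, contradicting that the ordinary Tutte polynomial of a matroid is nonzero. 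Third, \emph{monotonicity}: since $|E|$ is read off from $T^{(1)}$ and hence from any $T^{(h)}$, the reduction formula exhibits $T^{(h)}(M)$ as a function of $T^{(h+1)}(M)$; consequently, if two matroids have distinct genus $h$ Tutte polynomials, then they have distinct genus $h'$ Tutte polynomials for every $h'\geq h$.

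Now I would set $M_i:=M_0\dot{\cup}K_i$ and $N_i:=N_0\dot{\cup}K_i$, where $\{K_i\}_{i\geq 1}$ is any sequence of matroids with $|E_{K_i}|=i$ (for instance $i$ coloops). Multiplicativity together with $T^{(g)}(M_0)=T^{(g)}(N_0)$ gives
\[
T^{(g)}(M_i)=T^{(g)}(M_0)\,T^{(g)}(K_i)=T^{(g)}(N_0)\,T^{(g)}(K_i)=T^{(g)}(N_i).
\]
For the high genus inequality, monotonicity upgrades the inequality at genus $\lceil\sqrt{2}\,g\rceil$ to one at genus $G\geq\lceil\sqrt{2}\,g\rceil$, that is $T^{(G)}(M_0)\neq T^{(G)}(N_0)$; then multiplicativity and nonvanishing of $T^{(G)}(K_i)$ give, in the integral domain of polynomials,
\[
T^{(G)}(M_i)-T^{(G)}(N_i)=\bigl(T^{(G)}(M_0)-T^{(G)}(N_0)\bigr)\,T^{(G)}(K_i)\neq 0.
\]
Since $|E_{M_i}|=|E_{M_0}|+i$ assumes infinitely many values, the $M_i$ (and likewise the $N_i$) are pairwise non-isomorphic, so $\{M_i\}$ and $\{N_i\}$ are genuine infinite sequences; this establishes the proposition with $c_1=\sqrt{2}$ and $c_2=1$.

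All steps are routine once Theorem~\ref{matroidsMandN} is in hand, so the proof is short. The one point that genuinely requires care --- and which I regard as the main obstacle --- is guaranteeing that forming direct sums cannot accidentally destroy the genus $G$ inequality through cancellation; this is precisely the content of the nonvanishing fact, namely that $T^{(G)}(K_i)$ is a nonzero element of the polynomial ring so that multiplication by it is injective.
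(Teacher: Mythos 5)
Your proof is correct, but it does not follow the paper's route --- in fact the paper gives no argument at all: it simply declares the proposition ``a direct consequence of Theorem \ref{matroidsMandN},'' the implicit reading being that the sequences $\{M_i\}$, $\{N_i\}$ are just the pairs produced by that theorem as the agreement genus varies, so that the $i$-th pair agrees at genus $g_i$ and disagrees at genus $\lceil \sqrt{2}\,g_i\rceil$ with $g_i\to\infty$ (this reading is also what the subsequent Problem and Conjecture in Section \ref{sec:rem} presuppose). That implicit argument, however, does not prove the statement as literally quantified: for a fixed $g$ and a fixed target genus $\lceil c_1g+c_2\rceil$, the pairs obtained for larger and larger agreement genus $g'$ are only known to disagree from genus $\lceil\sqrt{2}\,g'\rceil$ onward, which eventually exceeds any fixed bound, so no single disagreement genus serves the whole sequence. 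Your padding construction repairs exactly this: one seed pair from Theorem \ref{matroidsMandN}, direct-summed with a common matroid $K_i$ of $i$ coloops, yields infinitely many pairwise non-isomorphic pairs that all agree at genus $g$ and all disagree at the single genus $\lceil\sqrt{2}\,g\rceil+1=\lceil\sqrt{2}\,g+1\rceil$, using the multiplicativity of $T^{(h)}$ over direct sums (asserted at the start of Section \ref{sec:3}), the integral-domain property of the polynomial ring, the nonvanishing of $T^{(h)}(K_i)$, and the monotonicity of disagreement as the genus increases. So your argument establishes the stronger, literal form of the proposition, at the cost of three auxiliary facts the paper never spells out. Two small improvements: nonvanishing needs no induction, since setting every variable equal to $2$ in $T^{(h)}(K)$ gives the value $2^{h|E_K|}\neq 0$; and the same evaluation shows that $|E|$ is determined by $T^{(h)}$ directly, which is the cleanest justification of your monotonicity step --- your phrasing that $|E|$ is ``read off from $T^{(1)}$ and hence from any $T^{(h)}$'' is mildly circular, because recovering $T^{(1)}$ from $T^{(h)}$ via the reduction formula already requires knowing $|E|$ (to know the normalizing factor $2^{|E|}$), whereas the all-twos evaluation $2^{h|E|}$ requires nothing.
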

\noindent
In connection with this proposition, we pose the next problem:
\begin{prob}
    {Find two infinite sequences of matroids $\{ M_i\}$ and $\{ N_i\}$ such that the constant $c_1$ is as small as possible.}
\end{prob}
\noindent
More precisely, we have the following conjecture: 
\begin{conj}
There exist two infinite sequences of
matroids $\{ M_i\}$ and $\{ N_i\}$ such that
$T^{(g)}(M_i)=T^{(g)}(N_i)$ and 
$T^{( g + 1)}(M_i) \neq T^{(  g +1)}(N_i)$.
\end{conj}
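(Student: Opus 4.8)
The plan is to split the statement into an easy \emph{amplification} step and a hard \emph{single--example} step, and to attack the latter through the intersecting cover number $\iota$ already isolated in this paper. The whole point is that the genus jump here is $+1$ rather than the factor $\sqrt2$ of Theorem \ref{matroidsMandN}, so the emphasis must be on pinning the \emph{first difference genus} (the least $h$ with $T^{(h)}(M)\neq T^{(h)}(N)$) down to an exact value, not merely to an interval.

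First I would reduce the production of two \emph{infinite} sequences to the production of a \emph{single} pair for each fixed $g$. Suppose $(M,N)$ is one pair with $T^{(g)}(M)=T^{(g)}(N)$ and $T^{(g+1)}(M)\neq T^{(g+1)}(N)$. Using the multiplicativity of $T^{(h)}$ over direct sums (the factorization into non-separable components recalled at the start of Section \ref{sec:3}), put $M_i:=M\oplus L_i$ and $N_i:=N\oplus L_i$ for an infinite family of pairwise non-isomorphic matroids $L_i$, say $L_i:=U_{1,1}^{\oplus i}$. Then $T^{(g)}(M_i)=T^{(g)}(M)\,T^{(g)}(L_i)=T^{(g)}(N)\,T^{(g)}(L_i)=T^{(g)}(N_i)$, while $T^{(g+1)}(L_i)$ is a nonzero polynomial and the ambient polynomial ring is an integral domain, so $T^{(g+1)}(M)\neq T^{(g+1)}(N)$ forces $T^{(g+1)}(M_i)\neq T^{(g+1)}(N_i)$; in particular $M_i\not\simeq N_i$ and the ground sets grow. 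Thus it suffices to build, for every $g$, one pair whose first difference genus equals exactly $g+1$. The base case $g=1$ is already the pair $(R_{2n},Q_{2n})$ of Subsection \ref{subsec:4.1}, whose first difference genus is $2$.

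The structural observation I would exploit is that, for the loose--triangle matroids $S_{4n},S'_{4n}$, the first difference genus is governed \emph{exactly} by $\iota(C_n)$ rather than by the two separate estimates used in Theorem \ref{matroidsMandN}. Indeed Theorem \ref{thm:iota_inf} says a distinguishing monomial appears as soon as $h\ge\iota(C_n)$ (an explicit cover), while the two reduction lemmata preceding Proposition \ref{prop:g} show that any distinguishing $h$--tuple $\mathcal{A}$ yields, after the stated surgeries, a realization of a circuital loose cycle of length $n$ by $\{A'_i\cap A'_j\mid 3\le i<j\le h\}\cup\{A'_1,A'_2\}$, i.e.\ a $2$--fold intersecting cover of $C_n$ by $h$ sets with two edges allowed as full sets. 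Hence the first difference genus equals $\iota(C_n)$ up to a bounded additive constant $c\in\{0,1,2\}$ coming from the two ``free'' sets $A'_1,A'_2$. The $\sqrt2$ gap in Theorem \ref{matroidsMandN} is therefore an artifact of sandwiching $\iota(C_n)$ between the counting bound of order $\sqrt{2n}$ (Proposition \ref{prop:g}, via $\binom{h-2}{2}+2\ge n$) and the explicit cover of order $2\sqrt n$ (Theorem \ref{thm:iota_sup}), \emph{not} of any looseness in the genus jump itself. Consequently, once the exact equality (first difference genus) $=\iota(C_n)+c$ is established and $\iota(C_n)$ is determined exactly for infinitely many $n$ realizing each target value $g+1-c$, choosing such an $n$ gives agreement at genus $g$ and disagreement at genus $g+1$, and the amplification above finishes the proof.

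The hard part will be computing $\iota(C_n)$ exactly. Closing the $\sqrt2$ gap to equality means either constructing optimal $2$--fold intersecting covers of $C_n$ that meet the trivial counting bound $\min\{s:\binom{s}{2}\ge n\}$ --- which requires realizing the $n$ cyclically arranged edges as \emph{distinct} pairwise intersections of only about $\sqrt{2n}$ sets, a rigid incidence constraint that I do not expect to be achievable for all $n$ --- or proving a matching lower bound strictly above the naive counting bound. A more robust route, which I would pursue in parallel, is to replace $C_n$ by a family of graphs $G_m$ whose intersecting cover number provably equals $\min\{s:\binom{s}{2}\ge |E(G_m)|\}$ and runs through every integer as $m$ grows; the simplest candidates are matchings $m\cdot K_2$, for which sets with pairwise disjoint two--element intersections attain the optimum. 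For these one must rebuild the analogues of $S_{4n},S'_{4n}$, re-verify that the resulting loose--triangle independence systems are genuinely non-isomorphic matroids (the failures of $Q_4$ and $S_8$ show that degenerate or too-small configurations break the exchange axiom, so a spacing condition separating consecutive triangles will be required), and re-prove the lower-bound reduction so that the first difference genus is exactly, and not merely at most, the intersecting cover number. These two points --- the exact covering-design value and the matroid-validity of the generalized construction --- are the principal obstacles.
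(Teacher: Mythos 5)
You should first be clear about the ground truth here: the statement you were given is the final \emph{conjecture} of Section~5.3, and the paper does not prove it. The authors prove only the $\sqrt{2}$-factor version (Theorem~\ref{matroidsMandN}) and explicitly pose the $+1$ jump as open. So there is no paper proof to compare against, and your proposal has to stand on its own as a complete argument --- which, by your own admission, it does not: it is a program with the two decisive steps left unresolved. The parts that do work are the easy ones. The amplification step is sound: $T^{(h)}$ is multiplicative over direct sums (as recalled at the start of Section~\ref{sec:3}), the genus-$(g+1)$ polynomial of $U_{1,1}^{\oplus i}$ is nonzero (evaluate all variables at $2$ to get $2^{(g+1)i}$), and the ambient polynomial ring is an integral domain, so one pair with first-difference genus exactly $g+1$ yields the infinite sequences. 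Equally correct is your implicit use of the specialization identity $T^{(g)}(M)=2^{-|E_M|}T^{(g+1)}(M)\vert_{\cdots}$ to conclude that agreement propagates downward, so that ``first-difference genus $=g+1$'' really does give agreement at $g$ and disagreement at $g+1$.

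The genuine gaps are in the hard half, and they are not merely technical. First, your claimed equality ``first-difference genus $=\iota(C_n)+c$ with $c\in\{0,1,2\}$'' is not established by the paper's machinery: Theorem~\ref{thm:iota_inf} gives only the upper bound (a cover yields a distinguishing tuple), while the two reduction lemmata before Proposition~\ref{prop:g} are themselves only sketched and, as you note, produce a cover of $C_n$ in which $A'_1,A'_2$ appear as whole sets rather than pairwise intersections --- so at best one gets a window of additive width $2$, with $c$ unknown and possibly varying with $n$. Even granting that window, you would still need (a) the exact value of $\iota(C_n)$, i.e.\ closing the gap between the counting bound of order $\sqrt{2n}$ (Proposition~\ref{prop:g}) and the construction of order $2\lceil\sqrt{n}\rceil$ (Theorem~\ref{thm:iota_sup}), and (b) a proof that $n\mapsto h^*(n)$ (the first-difference genus) hits \emph{every} sufficiently large integer; neither is addressed, and (a) is precisely the open combinatorial core of the whole problem. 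Second, your fallback via matchings $m\cdot K_2$ fails structurally, not just for verification reasons: the non-isomorphism of $S_{4n}$ and $S'_{4n}$ rests entirely on the cycle-versus-path arrangement of \emph{overlapping} triangles, and for pairwise disjoint triangles there is no such dichotomy --- deleting two families of the same number of pairwise disjoint triples from $U_{3,N}$ gives isomorphic matroids (any bijection of the ground set matching the triples is an isomorphism), while deleting different numbers already changes the number of bases and hence $T^{(1)}$. So no non-isomorphic pair with matching low-genus polynomials can arise from a matching configuration, and a ``spacing condition'' cannot rescue it; any replacement graph family must retain enough overlap structure to distinguish two globally different but locally identical layouts, which reintroduces exactly the $\iota$-determination problem you were trying to avoid.
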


\end{enumerate}

\appendix
\section{The difference $T^{(2)}(R_{2n})-T^{(2)}(Q_{2n})$} 
Here, we calculate The difference $T^{(2)}(R_{2n})-T^{(2)}(Q_{2n})$, 
For a pair $(A_1, A_2)$ of subsets of the ground set $E$ of the matroid $M$, 
we define $f(A_1, A_2)$ as 
\begin{align*} 
f(A_1, A_2)&:=(x_1-1)^{\rho(E)-\rho(A_1)} (y_1-1)^{|A_1|-\rho(A_1)} 
(x_2-1)^{\rho(E)-\rho(A_2)} \nonumber \\ &\times (y_2-1)^{|A_2|-\rho(A_2)} 
(x_3-1)^{\rho(E)-\rho(A_1\cap A_2)} 
(y_3-1)^{|A_1\cap A_2|-\rho(A_1\cap A_2)} \nonumber \\ 
&\times (x_4-1)^{\rho(E)-\rho(A_1\cup A_2)} (y_4-1)^{|A_1\cup A_2|-\rho(A_1\cup A_2)}, 
\end{align*} 
\paragraph{} 

For a matroid $M$ on $E$,  we define the following polynomials; 
\begin{align} 
h(M; s, t, u)&:=
\sum_{ \substack{A_1, A_2\subset E,\\ |A_1\cap A_2|=s \\ |A_1|=t,\\ |A_2|=u}} 
f(A_1, A_2)
\label{partition_by_three_parameters},  
\end{align}

From (\ref{definition_genus_Tutte}) and (\ref{partition_by_three_parameters}), 
it is clear that the following equation holds; 
\begin{align*} 
T^{(2)} (M; \{x_1, x_2, x_3, x_4\}, \{y_1, y_2, y_3, y_4\})
= \sum_{(s,t,u)\in (E \cup \{ 0\})^3} h(M; s,t,u). 
\end{align*} 

Since both $R_{2n}$ and $Q_{2n}$ are self-dual, we have 
\begin{align} 
h(R_{2n}; s, t, u)&=h(R_{2n}; t+u-s, 2n-t, 2n-u), \label{self-dual-R_2n}\\ 
 h(Q_{2n}; s, t, u)&=h(Q_{2n}; t+u-s, 2n-t, 2n-u).  \label{self-dual-Q_2n}
\end{align}  
\begin{prop} \label{n_free} 
If there does not exist $n$ among $s, t, u, t+u-s$, 
then we have $h(R_{2n}; s, t, u)=h(Q_{2n}; s, t, u)$.  
\end{prop}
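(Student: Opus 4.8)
The plan is to compare the two matroids $R_{2n}$ and $Q_{2n}$ term by term in the sum defining $h$. Recall that both are obtained from the uniform matroid $U_{n,2n}$ by removing exactly two bases: for $R_{2n}$ we delete $X_1,X_2$, and for $Q_{2n}$ we delete $X_1,X_3$. Since the rank function of a matroid is completely determined by its collection of bases (indeed $\rho(A)$ is the size of the largest basis-intersection with $A$), the \emph{only} way the rank functions $\rho_{R_{2n}}$ and $\rho_{Q_{2n}}$ can differ from each other, or from $\rho_{U_{n,2n}}$, is on subsets $A$ whose rank is sensitive to whether $X_1$, $X_2$, or $X_3$ counts as a basis. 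The first step is therefore to pin down exactly which subsets $A\subseteq E$ satisfy $\rho_{R_{2n}}(A)\neq\rho_{Q_{2n}}(A)$: these are precisely the sets $A$ of size $n$ that are rank-deficient in one matroid but not the other, namely $A\in\{X_2,X_3\}$ (the deleted bases $X_2$ and $X_3$ become the rank-$(n-1)$ circuit-hyperplanes), while $X_1$ is deleted from both and hence behaves identically. All other subsets have the same rank in both matroids.

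Next I would track how these discrepancies propagate into the bivariate summand $f(A_1,A_2)$, which depends on the four sets $A_1,A_2,A_1\cap A_2,A_1\cup A_2$ through their cardinalities and ranks. The cardinalities $|A_1|,|A_2|,|A_1\cap A_2|,|A_1\cup A_2|$ are combinatorial and identical for the two matroids, so a difference between $f_{R_{2n}}(A_1,A_2)$ and $f_{Q_{2n}}(A_1,A_2)$ can arise only when one of the four sets equals $X_2$ or $X_3$ and that set's rank differs between the two matroids. The key observation is the hypothesis of the proposition: $h(M;s,t,u)$ restricts the sum to pairs with $|A_1\cap A_2|=s$, $|A_1|=t$, $|A_2|=u$, and hence $|A_1\cup A_2|=t+u-s$. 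If none of $s,t,u,t+u-s$ equals $n$, then none of the four relevant sets $A_1,A_2,A_1\cap A_2,A_1\cup A_2$ can have cardinality $n$, so none of them can possibly be $X_2$ or $X_3$ (both of which have size exactly $n$). Consequently every one of the four rank values appearing in $f(A_1,A_2)$ is computed at a set where $\rho_{R_{2n}}$ and $\rho_{Q_{2n}}$ agree.

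Putting this together, under the stated hypothesis we have $f_{R_{2n}}(A_1,A_2)=f_{Q_{2n}}(A_1,A_2)$ for \emph{every} pair $(A_1,A_2)$ contributing to $h(\,\cdot\,;s,t,u)$, because each of the eight exponents $\rho(E)-\rho(\bullet)$ and $|\bullet|-\rho(\bullet)$ is literally the same integer in both matroids. Summing over all such pairs gives $h(R_{2n};s,t,u)=h(Q_{2n};s,t,u)$, which is the claim. I would streamline the write-up by isolating a single lemma stating that $\rho_{R_{2n}}(A)=\rho_{Q_{2n}}(A)$ whenever $|A|\neq n$, and noting the trivial point that $\rho(E)=n$ is common to both; everything else is bookkeeping.

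The main obstacle, and the only place requiring genuine care, is the verification that the rank functions differ \emph{only} on sets of size exactly $n$. One must check that deleting a single basis from $U_{n,2n}$ does not perturb the rank of any set of size other than $n$: a set of size $<n$ already has rank equal to its cardinality and remains independent, while a set of size $>n$ still contains some surviving basis (since only two of the $\binom{2n}{n}$ bases are removed, and $\binom{2n}{n}>2$ for $n\geq3$), so its rank stays $n$. This monotonicity-and-counting argument is elementary but is the substantive content; once it is in place, the cardinality constraint imposed by fixing $(s,t,u)$ with no coordinate equal to $n$ does all the remaining work automatically.
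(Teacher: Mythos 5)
Your proposal is correct and is essentially the paper's own argument: the paper likewise rests on the fact that when none of $s,t,u,t+u-s$ equals $n$, every rank appearing in $f(A_1,A_2)$ equals $\min\{\mathrm{size},n\}$ in both matroids (all rank-deficient sets $X_1,X_2,X_3$ having size exactly $n$), so each summand is the same fixed monomial, which the paper then simply multiplies by the common count $\binom{2n}{s}\binom{2n-s}{t-s}\binom{2n-t}{u-s}$. One small fix to your last paragraph: to see that a set $A$ with $|A|>n$ keeps rank $n$, you should count the $\binom{|A|}{n}\geq n+1>2$ many $n$-subsets of $A$ itself (at most two of which were deleted), rather than appeal to the total number $\binom{2n}{n}$ of bases of the matroid.
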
 
\begin{proof} 
If there does not exist $n$ among $s, t, u, t+u-s$, 
then we have 
\begin{align*} 
f(A_1, A_2)&=(x_1-1)^{\max \{ n-t, 0\} } (y_1-1)^{\max \{ t-n, 0\} } 
(x_2-1)^{\max \{ n-u, 0\} } \nonumber \\ &\times (y_2-1)^{\max \{ u-n, 0\}} 
(x_3-1)^{\max \{ n-s, 0\} } (y_3-1)^{ max \{ s-n, 0\} } \nonumber \\ 
&\times (x_4-1)^{ \max \{ n+s-t-u, 0\}} (y_4-1)^{\max \{ t+u-s-n, 0\} } 
\end{align*} 
holds for any pair $(A_1, A_2)$. 
Therefore, we have 
\begin{align*} 
&\textcolor{white}{=}h(R_{2n}; s, t, u)\\ 
&=h(Q_{2n}; s, t, u) \\ 
&=\displaystyle\binom{2n}{s}\displaystyle\binom{2n-s}{t-s}\displaystyle\binom{2n-t}{u-s}
(x_1-1)^{\max \{ n-t, 0\} } (y_1-1)^{\max \{ t-n, 0\} } 
 \nonumber \\ &\times (x_2-1)^{\max \{ n-u, 0\} } (y_2-1)^{\max \{ u-n, 0\}} 
(x_3-1)^{\max \{ n-s, 0\} } (y_3-1)^{ max \{ s-n, 0\} } \nonumber \\ 
&\times (x_4-1)^{ \max \{ n+s-t-u, 0\}} (y_4-1)^{\max \{ t+u-s-n, 0\} }.  
\qedhere
\end{align*} 
\end{proof} 
\begin{prop} \label{cupn} 
If the equation $t+u-s=n$ holds, then 
the equation $h(R_{2n}; s, t, u)=h(Q_{2n}; s, t, u)$ holds.  
\end{prop}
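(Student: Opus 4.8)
My plan is to compare the two sums term by term after locating precisely where the rank functions of $R_{2n}$ and $Q_{2n}$ disagree. First I would record that both matroids arise from $U_{n,2n}$ by un-relaxing two circuit--hyperplanes, so that $\rho(E)=n$ in all three matroids and, crucially, the rank function of $R_{2n}$ (resp. $Q_{2n}$) coincides with $\min(|A|,n)$ for every $A\subseteq E$ except on its two removed bases, where the rank drops by one. Concretely $\rho_{R_{2n}}(X_1)=\rho_{R_{2n}}(X_2)=n-1$ and $\rho_{Q_{2n}}(X_1)=\rho_{Q_{2n}}(X_3)=n-1$, while every other set keeps the uniform rank: for $|A|<n$ no circuit of size $n$ fits inside $A$, and for $|A|>n$ the set still contains an unremoved basis since $n\ge 3$. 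Hence $R_{2n}$ and $Q_{2n}$ have identical rank functions except on the two $n$-element sets $X_2$ and $X_3$ (on $X_2$ one has $\rho_{R_{2n}}=n-1$ but $\rho_{Q_{2n}}=n$, and on $X_3$ the reverse); in particular they even agree on $X_1$.

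Next I would exploit the observation that lowering the rank of a relevant set by one multiplies the corresponding block of $f(A_1,A_2)$ by exactly $(x_i-1)(y_i-1)$, with $i\in\{1,2,3,4\}$ according to whether the deficient set is $A_1$, $A_2$, $A_1\cap A_2$ or $A_1\cup A_2$. I would then group the pairs contributing to $h(M;s,t,u)$ by their union $Z:=A_1\cup A_2$, which satisfies $|Z|=t+u-s=n$ by hypothesis. Because every relevant set is contained in $Z$ and the removed bases all have size $n$, any relevant set that is rank-deficient must itself equal $Z$; in particular a pair contributes a correction factor only when $Z$ is one of the special $n$-sets. For a fixed $Z$ the number of pairs with prescribed sizes and union $Z$ is the multinomial coefficient $\binom{n}{s,\,t-s,\,u-s}$, the same for both matroids, and the generic per-pair value depends only on $s,t,u$. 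Summing over the $\binom{2n}{n}$ choices of $Z$ therefore splits into a generic part, identical for $R_{2n}$ and $Q_{2n}$, plus a correction weighted by the number of special $n$-sets occurring as a union. Since $R_{2n}$ and $Q_{2n}$ each have exactly two such special sets, the two totals coincide.

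The step I expect to require the most care is the boundary behaviour when $t=n$ or $u=n$ (forcing $A_1=Z$ or $A_2=Z$) and the degenerate case $s=t=u=n$ (forcing $A_1=A_2=Z$): here the union coincides with $A_1$, $A_2$ or $A_1\cap A_2$, so a special $Z$ produces additional factors $(x_1-1)(y_1-1)$, $(x_2-1)(y_2-1)$ or $(x_3-1)(y_3-1)$ beyond the $(x_4-1)(y_4-1)$ coming from the union slot. I would verify that in each of these cases the special sets still enter only through their count, so equality is preserved. The conceptual point underlying the whole argument is that the feature distinguishing $R_{2n}$ from $Q_{2n}$---namely that $X_1,X_2$ are disjoint while $X_1,X_3$ overlap in the single element $n$---never surfaces: when $|A_1\cup A_2|=n$ two distinct special sets can never both appear among the relevant sets of one pair (their union would exceed size $n$), so each special set contributes independently and only its multiplicity matters.
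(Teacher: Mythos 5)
Your proposal is correct and follows essentially the same route as the paper: both arguments group the pairs $(A_1,A_2)$ by their union $Z$ of size $n$, observe that a rank-deficient relevant set must equal $Z$ itself (so the distinction between $X_1,X_2$ disjoint and $X_1,X_3$ overlapping never enters), handle the boundary cases $t=n$, $u=n$, and $s=t=u=n$ where extra factors $(x_i-1)(y_i-1)$ appear, and conclude by noting that each matroid has exactly two special $n$-sets with identical pair counts per union. Your formulation via the rank-function discrepancy is a slightly more unified packaging of what the paper does in its explicit three-case computation, but it is not a genuinely different proof.
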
 
\begin{proof} Note that there exist $\binom{2n}{n}$ choice of $A_1\cup A_2$. \\ 
(Case 1) Suppose that $\max \{ t, u\}\leq n-1$ holds. 
For $R_{2n}$, we have 
\begin{align*} 
&f(A_1, A_2) 
= 
\begin{cases} 
(x_1-1)^{n-t} 
(x_2-1)^{ n-u } 
(x_3-1)^{ n-s } \\ 
\times (x_4-1) (y_4-1) & \textnormal{if $A_1\cup A_2\in X_1, X_2$}\\ 
(x_1-1)^{n-t} 
(x_2-1)^{ n-u } 
(x_3-1)^{ n-s } & \textnormal{otherwise.}
\end{cases} 
\end{align*} 
For $Q_{2n}$, we have 
\begin{align*} 
&f(A_1, A_2) 
= 
\begin{cases} 
(x_1-1)^{n-t} 
(x_2-1)^{ n-u } 
(x_3-1)^{ n-s } \\ 
\times (x_4-1) (y_4-1) & \textnormal{if $A_1\cup A_2\in X_1, X_3$}\\ 
(x_1-1)^{n-t} 
(x_2-1)^{ n-u } 
(x_3-1)^{ n-s } & \textnormal{otherwise.}
\end{cases} 
\end{align*} 
For each $A_1\cup A_2$ of size $n$, there exist 
$\displaystyle\binom{n}{t}\binom{t}{u}$ pairs of $(A_1, A_2)$. 
Therefore, 
\begin{align*} 
&\textcolor{white}{=}h(R_{2n}; s, t, u)\\ 
&=h(Q_{2n}; s, t, u) \\ 
&=\binom{n}{t}\binom{t}{u}\left( 2(x_4-1) (y_4-1) +\binom{2n}{n} -2\right)  \\
&\times (x_1-1)^{n-t} 
(x_2-1)^{ n-u } 
(x_3-1)^{ n-s } . 
\end{align*} 
(Case 2) Suppose that $\min\{ t, u\}<\max \{ t, u\}=n$ holds. 
Without loss of generality, we can assume $t=n$. 
For $R_{2n}$, we have 
\begin{align*} 
&f(A_1, A_2) 
= 
\begin{cases} 
(x_1-1)(y_1-1) 
(x_2-1)^{ n-u } 
(x_3-1)^{ n-s } \\ 
\times (x_4-1) (y_4-1) \quad \qquad \textnormal{if $A_1\cup A_2\in \{ X_1, X_2\}$}\\ 
(x_2-1)^{ n-u } 
(x_3-1)^{ n-s } \quad \textnormal{otherwise.}
\end{cases} 
\end{align*} 
For $Q_{2n}$, we have 
\begin{align*} 
&f(A_1, A_2) 
= 
\begin{cases} 
(x_1-1)(y_1-1) 
(x_2-1)^{ n-u } 
(x_3-1)^{ n-s } \\ 
\times (x_4-1) (y_4-1) \quad \qquad \textnormal{if $A_1\cup A_2\in \{ X_1, X_3\}$}\\ 
(x_2-1)^{ n-u } 
(x_3-1)^{ n-s } \quad \textnormal{otherwise.}
\end{cases} 
\end{align*} 
For each $A_1=A_1\cup A_2$ of size $n$, there exist 
$\displaystyle\binom{n}{u}$ choices of $A_2=A_1\cap A_2$. 
Therefore, we have 
\begin{align*} 
&\textcolor{white}{=}h(R_{2n}; s, t, u)\\ 
&=h(Q_{2n}; s, t, u) \\ 
&=\binom{n}{u}\left( 2(x_1-1)(y_1-1)(x_4-1) (y_4-1) +\binom{2n}{n} -2\right)  \\
&\times (x_2-1)^{ n-u } 
(x_3-1)^{ n-s } . 
\end{align*} 
(Case 3) Let us assume that $t=u=t+u-s=n$ holds. 
Then, $s=n$ also holds. 
Therefore, we have 
\begin{align*} 
&\textcolor{white}{=}h(R_{2n}; s, t, u)\\ 
&=h(Q_{2n}; s, t, u) \\ 
&=2\prod^4_{i=1} (x_i-1)\prod^4_{i=1} (y_i-1). \qedhere
\end{align*} 
\end{proof} 
\begin{prop} \label{capn} 
For any pair of $t$ and $u$, the equation $h(R_{2n}; n, t, u)=h(Q_{2n}; n, t, u)$ holds.  
\end{prop}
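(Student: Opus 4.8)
The plan is to partition the defining sum of $h(M;n,t,u)$ according to the common intersection $I:=A_1\cap A_2$, and to show that once $I$ is fixed every term $f(A_1,A_2)$ takes one and the same value, determined solely by whether $\rho(I)=n$ or $\rho(I)=n-1$.

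First I would observe that $|A_1\cap A_2|=n$ forces $t=|A_1|\geq n$ and $u=|A_2|\geq n$; if either $t<n$ or $u<n$, the index set is empty and both sides vanish, so we may assume $t,u\geq n$. For a fixed $n$-subset $I\subseteq E$, the pairs $(A_1,A_2)$ with $A_1\cap A_2=I$, $|A_1|=t$, $|A_2|=u$ are precisely those of the form $A_1=I\cup P$, $A_2=I\cup Q$ with $P,Q$ disjoint subsets of the $n$-element set $E\setminus I$ satisfying $|P|=t-n$ and $|Q|=u-n$; there are $\binom{n}{t-n}\binom{2n-t}{u-n}$ of them.

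The crux is to verify that $f(A_1,A_2)$ depends on such a pair only through $\rho(I)$. Here I would use that every special (rank-deficient) set of $R_{2n}$ or $Q_{2n}$ has exactly $n$ elements, so any subset of $E$ of size strictly greater than $n$ has rank $n$. Thus $\rho(A_1)=n$ whenever $t>n$, while $A_1=I$ and $\rho(A_1)=\rho(I)$ when $t=n$; symmetrically for $A_2$ in terms of $u$; and since $|A_1\cup A_2|=t+u-n$, one gets $\rho(A_1\cup A_2)=n$ unless $t=u=n$, in which case $A_1\cup A_2=I$ and $\rho(A_1\cup A_2)=\rho(I)$. In every subcase each of the four ranks appearing in $f$ equals either $n$ or $\rho(I)$, with the choice dictated by $t,u$ alone. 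Hence, for fixed $t,u$, the term $f(A_1,A_2)$ equals a single closed-form expression $f_{\mathrm{gen}}$ when $\rho(I)=n$ and a single expression $f_{\mathrm{spec}}$ when $\rho(I)=n-1$, and these formulas are literally identical for $R_{2n}$ and for $Q_{2n}$.

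From this I would conclude
\begin{align*}
h(M;n,t,u)=\binom{n}{t-n}\binom{2n-t}{u-n}\Bigl[\bigl(\tbinom{2n}{n}-2\bigr)f_{\mathrm{gen}}+2f_{\mathrm{spec}}\Bigr],
\end{align*}
using that each of $R_{2n}$ and $Q_{2n}$ has exactly two special $n$-subsets (namely $X_1,X_2$ for $R_{2n}$ and $X_1,X_3$ for $Q_{2n}$) and $\binom{2n}{n}-2$ non-special ones. As the right-hand side is the same for both matroids, the equality $h(R_{2n};n,t,u)=h(Q_{2n};n,t,u)$ follows. The main obstacle is the crux step: one must check, case by case in whether $t$ and $u$ equal $n$ or exceed it, that the ranks of $A_1$, $A_2$, and $A_1\cup A_2$ collapse to $n$ exactly when the corresponding cardinality exceeds $n$ and otherwise reduce to $\rho(I)$. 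This is where the hypothesis $s=n$ is essential: it guarantees that the only $n$-element set capable of being special in a given term is $I$ itself, after which the counting and the matching of the two special-set counts are immediate.
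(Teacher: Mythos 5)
Your proof is correct, and it takes a genuinely different route from the paper's. The paper disposes of Proposition \ref{capn} by reduction: invoking the self-duality relations (\ref{self-dual-R_2n}) and (\ref{self-dual-Q_2n}), complementation $(A_1,A_2)\mapsto(E\setminus A_1,E\setminus A_2)$ turns a pair whose intersection has size $n$ into a pair whose union has size $n$, so the claim follows immediately from Proposition \ref{cupn}, which was already proved. You instead compute $h(M;n,t,u)$ from scratch: you partition the sum by $I=A_1\cap A_2$, use the fact that in both matroids the rank of a set is $\min\{|A|,n\}$ except on exactly two special $n$-sets (the circuit-hyperplanes $X_1,X_2$, resp.\ $X_1,X_3$), and conclude that every term collapses to one of two expressions $f_{\mathrm{gen}}$, $f_{\mathrm{spec}}$ determined by $\rho(I)$ alone, with matching multiplicities $\binom{2n}{n}-2$ and $2$ for the two matroids. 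Your crux step checks out: for $t>n$ one has $|A_1|>n$ and hence $\rho(A_1)=n$, for $t=n$ one has $A_1=I$, and likewise for $A_2$ and for $A_1\cup A_2$ (special only when $t=u=n$); the pair count $\binom{n}{t-n}\binom{2n-t}{u-n}$ is also right. In effect your argument is the intersection-side mirror of the paper's own proof of Proposition \ref{cupn}, which fixes the union and cases on whether it is special. The paper's route buys brevity and reuse of earlier work; yours buys self-containedness and an explicit closed form, and it never leans on the self-duality relations --- which, as literally stated in the paper, only hold after the substitution $x_1\leftrightarrow y_1$, $x_2\leftrightarrow y_2$, $x_3\leftrightarrow y_4$, $x_4\leftrightarrow y_3$ and with first parameter $2n-t-u+s$ rather than $t+u-s$ (compare $h(M;0,0,0)$ with $h(M;0,2n,2n)=0$), a wrinkle the paper's two-line proof glosses over and your argument avoids entirely.
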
 
\begin{proof} 
From the equations (\ref{self-dual-R_2n}), (\ref{self-dual-Q_2n}) and 
Proposition \ref{cupn}, we have 
\begin{align*} 
h(R_{2n}; n, t, u) 
&=h(R_{2n}; t+u-n, 2n-t, 2n-u) \\ 
&=h(Q_{2n}; t+u-n, 2n-t, 2n-u)\\
&=h(Q_{2n}; n, t, u). \qedhere 
\end{align*} 
\end{proof} 
\begin{prop} \label{t=n} 
For all $(s, t, u)$ such that an either $t$ and $u$ is $n$, 
and that the equality $s\not=n$ and $t+u-s\not=n$ hold,  
we have $h(R_{2n}; s, t, u)=h(Q_{2n}; s, t,u)$. 
\end{prop}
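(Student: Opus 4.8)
The plan is to pin down exactly which summands of $h(R_{2n};s,t,u)$ and $h(Q_{2n};s,t,u)$ can possibly differ. Both $R_{2n}$ and $Q_{2n}$ arise from $U_{n,2n}$ by relaxing two bases, so their rank functions coincide with that of $U_{n,2n}$ on every subset except the relaxed $n$-sets. Explicitly, $\rho_{R_{2n}}$ and $\rho_{Q_{2n}}$ disagree only on $X_2$ (rank $n-1$ in $R_{2n}$, rank $n$ in $Q_{2n}$) and on $X_3$ (rank $n$ in $R_{2n}$, rank $n-1$ in $Q_{2n}$); on $X_1$ both give rank $n-1$, and on every other subset both agree with the uniform value $\min(|\cdot|,n)$.

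By the hypothesis that exactly one of $t,u$ equals $n$, assume $t=n$ and $u\neq n$, the case $u=n$, $t\neq n$ being identical after interchanging the variable pairs $(x_1,y_1)$ and $(x_2,y_2)$. Since $u\neq n$, $s\neq n$, and $t+u-s\neq n$, the three sets $A_2$, $A_1\cap A_2$, $A_1\cup A_2$ all have size different from $n$, so each has the uniform rank in both matroids. Hence the portion of $f(A_1,A_2)$ carrying $x_2,y_2,x_3,y_3,x_4,y_4$ is a single monomial $\Phi_0=\Phi_0(s,u,n)$, the same for every admissible pair and the same in $R_{2n}$ and $Q_{2n}$. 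Furthermore, for any fixed $n$-set $A_1$ the number of $A_2$ with $|A_2|=u$ and $|A_1\cap A_2|=s$ equals the constant $N_0:=\binom{n}{s}\binom{n}{u-s}$, since one chooses $s$ elements of $A_2$ inside the $n$-set $A_1$ and $u-s$ elements inside its $n$-element complement $E\setminus A_1$.

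Collecting these observations, the only factor able to distinguish the two matroids is the $A_1$-factor $(x_1-1)^{n-\rho(A_1)}(y_1-1)^{n-\rho(A_1)}$, which is $1$ when $A_1$ is a basis and $(x_1-1)(y_1-1)$ when $A_1$ is a relaxed $n$-set. I would therefore factor both sums as
\begin{align*}
h(M;s,n,u)=\Phi_0\,N_0\sum_{|A_1|=n}\bigl(1+[(x_1-1)(y_1-1)-1]\,\mathbf{1}[A_1\text{ is relaxed in }M]\bigr),
\end{align*}
for $M\in\{R_{2n},Q_{2n}\}$. The inner sum equals $\binom{2n}{n}+2\bigl[(x_1-1)(y_1-1)-1\bigr]$ in both cases, because each of $R_{2n}$ and $Q_{2n}$ has exactly two relaxed $n$-sets (namely $\{X_1,X_2\}$ and $\{X_1,X_3\}$). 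Thus $h(R_{2n};s,n,u)=h(Q_{2n};s,n,u)$, as claimed.

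I do not anticipate a real obstacle: the entire argument rests on the single structural fact that relaxation perturbs ranks only on $n$-element sets, so once the hypotheses push $A_2$, $A_1\cap A_2$, and $A_1\cup A_2$ off cardinality $n$, the matroids $R_{2n}$ and $Q_{2n}$ look identical except for \emph{which} two $n$-sets are relaxed, and since both relax exactly two $n$-sets and the multiplicity $N_0$ is insensitive to the identity of $A_1$, the two generating functions agree term by term. The only points requiring care are verifying that $\Phi_0$ is genuinely constant (immediate once all three ranks are seen to be uniform) and, for the symmetric case $u=n$, repeating the computation with the $(x_2,y_2)$-factor playing the role of the $A_1$-factor. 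It is worth noting that this insensitivity breaks precisely when $t=u=n$: there $X_2$ and $X_3$ enter through the intersection count, and $X_1,X_2$ are disjoint while $X_1,X_3$ meet in one point, which is exactly the source of the nonvanishing difference recorded earlier.
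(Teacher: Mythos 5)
Your proof is correct and follows essentially the same route as the paper's: both arguments observe that under the hypotheses the ranks of $A_2$, $A_1\cap A_2$, and $A_1\cup A_2$ take their uniform values in both matroids, reduce the comparison to the rank of $A_1$, and use the fact that $R_{2n}$ and $Q_{2n}$ each have exactly two dependent $n$-sets together with the count $\binom{n}{s}\binom{n}{u-s}$ of admissible $A_2$ per fixed $A_1$. Your factorization of the sum as $\Phi_0 N_0$ times an inner sum over $A_1$ is just a cleaner bookkeeping of the paper's same computation.
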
 
\begin{proof} 
Without loss of generality, we can assume $t=n$ and $u\not=n$. 
At first, we show (\ref{rank_of_A_2}), (\ref{rank_of_cap}), and (\ref{rank_of_cup}) hold 
for each pair  $(A_1, A_2)$ such that all of 
\begin{align} 
&|A_1|=t=n, &|A_2|=t\not=n, 
&|A_1\cap A_2|=s\not=n, 
&|A_1\cup A_2|=t+u-s\not=n \label{assumption_of_t=n} 
\end{align} 
 hold.  
From $t\not=n$, we have 
\begin{align} 
\rho(A_2)=\min \{ t, n\}. \label{rank_of_A_2}
\end{align} 
From $|A_1\cap A_2|=s \leq n=t=|A_1|$ and the assumption $s\not=n$, 
we have $|A_1\cap A_2|\leq n-1$, i.e., we have 
\begin{align} 
\rho(A_1\cap A_2)=s. \label{rank_of_cap}
\end{align}  
From $|A_1\cup A_2|=t+u-s \geq n=t=|A_1|$ and the assumption $t+u-s\not=n$, 
we have $|A_1\cup A_2|\geq n+1$, i.e., we have 
\begin{align} 
\rho(A_1\cup A_2)=n. \label{rank_of_cup} 
\end{align}  

Note that there are $\displaystyle\binom{2n}{n}\binom{n}{s}\binom{n}{u-s}$ pairs of 
$(A_1, A_2)$ such that (\ref{assumption_of_t=n}) holds  
by choosing $n$ elements of $A_1$ from $E$ at first, 
choosing $s$ elements of $A_1\cap A_2$ from $A_1$ secondly, 
and choosing $u-s$ elements of $A_2\setminus A_1$ from $E\setminus A_1$. 
Among such pairs, 
$2\displaystyle\binom{n}{s}\binom{n}{u-s}$ pairs satisfy $\rho(A_1)=n-1$ 
for both $R_{2n}$ and $Q_{2n}$. 
Therefore, we have 
\begin{align*} 
&\textcolor{white}{=}h(R_{2n}; s, n, u) \\ 
&=h(Q_{2n}; s, n, u) \\ 
&= 2 \binom{n}{s}\binom{n}{u-s} 
(x_1-1)(y_1-1) (x_2-1)^{\max \{ 0, n-u\} } 
(y_2-1)^{\max \{ 0, u-n \}} \\ 
&\times (x_3-1)^{ \max \{ 0, n-s\} } 
(y_3-1)^{\max \{ 0, s-n\} } 
(y_4-1)^{u-s} \\ 
&+ \left( \binom{2n}{n}-2\right) \binom{n}{s}\binom{n}{u-s} 
 (x_2-1)^{\max \{ 0, n-u\} } 
(y_2-1)^{\max \{ 0, u-n \}} \\ 
&\times (x_3-1)^{ \max \{ 0, n-s\} } 
(y_3-1)^{\max \{ 0, s-n\} } 
(y_4-1)^{u-s}. \qedhere  
\end{align*} 
\end{proof} 

\begin{prop} \label{s_n_n} 
For any integer $s$ at least 2, 
the equality $h(R_{2n}, s, n, n)=h(Q_{2n}, s, n, n)$ holds. 
\end{prop}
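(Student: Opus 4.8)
The plan is to exploit the fact that $R_{2n}$ and $Q_{2n}$ are both obtained from $U_{n,2n}$ by declaring exactly two of its $n$-element sets to be non-bases, so their rank functions agree with that of $U_{n,2n}$ except on those distinguished $n$-sets. Since Proposition \ref{capn} already settles the case $s=n$ (where $A_1=A_2$), it suffices to treat $2\le s\le n-1$. First I would record the ranks entering $f(A_1,A_2)$ when $|A_1|=|A_2|=n$ and $|A_1\cap A_2|=s$ in this range. Every set of size strictly less than $n$ is independent in both matroids, so $\rho(A_1\cap A_2)=s$; every set of size strictly larger than $n$ has full rank $n$ in both matroids (a set of size $\ge n+1$ contains at least three $n$-subsets, of which at most two are non-bases), and since $|A_1\cup A_2|=2n-s\ge n+1$ we get $\rho(A_1\cup A_2)=\rho(E)=n$. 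The only possibly rank-deficient sets among $A_1,A_2$ are the two \emph{special} $n$-sets, namely $X_1,X_2$ for $R_{2n}$ and $X_1,X_3$ for $Q_{2n}$, each of rank $n-1$.

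With these ranks the summand factors as
\begin{align*}
f(A_1,A_2)=(x_3-1)^{n-s}(y_4-1)^{n-s}\,\alpha(A_1)\,\beta(A_2),
\end{align*}
where $\alpha(A_1)=(x_1-1)(y_1-1)$ if $A_1$ is special and $\alpha(A_1)=1$ otherwise, and likewise $\beta(A_2)=(x_2-1)(y_2-1)$ or $1$. The crucial observation is that the prefactor $(x_3-1)^{n-s}(y_4-1)^{n-s}$ and the index set of the sum, namely the ordered pairs of $n$-subsets of $E$ with intersection of size exactly $s$, depend only on $s$ and $n$ and not on the matroid. Hence $h(R_{2n};s,n,n)$ and $h(Q_{2n};s,n,n)$ can differ only through the values of $\alpha,\beta$ on the special sets.

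Next I would expand $\alpha(A_1)\beta(A_2)=(1+a\,\mathbf{1}[A_1\text{ special}])(1+b\,\mathbf{1}[A_2\text{ special}])$ with $a=(x_1-1)(y_1-1)-1$ and $b=(x_2-1)(y_2-1)-1$, and sum over the pairs. The constant term contributes identically for both matroids. The linear terms reduce to the count, for a fixed $n$-set $X$, of $n$-sets $A$ with $|X\cap A|=s$, which is $\binom{n}{s}^2$ independently of $X$; so each matroid contributes $2\binom{n}{s}^2$ from its two special sets, and these agree. The bilinear term counts ordered pairs of special sets whose intersection has size $s$: for $R_{2n}$ the special sets $X_1,X_2$ are disjoint, while for $Q_{2n}$ the special sets $X_1,X_3$ meet exactly in $\{n\}$. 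Since $2\le s\le n-1$, neither produces such a pair, so this term vanishes for both. Therefore all contributions coincide and $h(R_{2n};s,n,n)=h(Q_{2n};s,n,n)$.

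The only delicate point is the bilinear (``both special'') term, for the entire difference $T^{(2)}(R_{2n})-T^{(2)}(Q_{2n})$ is concentrated precisely in such special–special pairs; for $t=u=n$ these occur only at $s=0$ (the disjoint pair $X_1,X_2$ in $R_{2n}$) and at $s=1$ (the pair $X_1,X_3$ in $Q_{2n}$). The content of this proposition is exactly that the hypothesis $s\ge 2$ excludes both, so the argument hinges on checking that the two distinguished $n$-sets of each matroid intersect in fewer than two elements; everything else reduces to the $X$-independent count $\binom{n}{s}^2$.
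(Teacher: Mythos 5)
Your proof is correct and takes essentially the same approach as the paper's: reduce to $2\le s\le n-1$ via Proposition \ref{capn}, compute $\rho(A_1\cap A_2)=s$ and $\rho(A_1\cup A_2)=n$, note that the only rank-deficient sets are the two distinguished $n$-sets, and use $|X_1\cap X_2|=0$, $|X_1\cap X_3|=1$ to rule out ``both special'' pairs when $s\ge 2$, while the single-special counts $2\binom{n}{s}\binom{n}{n-s}$ agree for both matroids. The only difference is presentational: you package the paper's explicit four-case count as an inclusion--exclusion expansion of $\alpha(A_1)\beta(A_2)$ rather than writing out the common value of $h$.
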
 
\begin{proof} 
From Proposition \ref{cupn} and Proposition \ref{capn}, 
we can assume that $|A_1\cap A_2|=s\leq n-1$ and 
$|A_1\cup A_2|=2n-s\geq n+1$, i.e., 
$\rho(A_1\cap A_2)=s$ and $\rho(A_1\cup A_2)=n$ holds 
for both cases of $R_{2n}$ and $Q_{2n}$. 

From $X_1\cap X_2=\emptyset$ and $X_1\cap X_3=\{ n\}$, 
there does not a pair $(A_1, A_2)$ such that 
$\rho(A_1)=\rho(A_2)=n-1$. 
Note that there exist $\displaystyle\binom{2n}{n}\binom{n}{s}\binom{n}{n-s}$ 
pairs of $(A_1, A_2)$ such that $|A_1|=|A_2|=n$ and that $|A_1\cap A_2|=s$. 
Among those pairs, $\rho(A_1)=\rho(A_2)+1=n$ holds if and only if 
\begin{align*} 
\begin{cases} 
A_2\in \{ X_1, X_2\}  & \textnormal{ for the case $R_6$, }\\ 
A_2\in \{ X_1, X_3\} & \textnormal{ for the case $Q_6$. } 
\end{cases} 
\end{align*} 
The number of pairs of $(A_1, A_2)$ with $\rho(A_1)=\rho(A_2)+1=n$ is 
$2\displaystyle\binom{n}{s}\binom{n}{n-s}$ for both cases of $R_{2n}$ and $Q_{2n}$. 
There are also $2\displaystyle\binom{n}{s}\binom{n}{n-s}$ pairs of $(A_1, A_2)$ 
such that $\rho(A_2)=\rho(A_1)+1=n$ for both cases of $R_{2n}$ and $Q_{2n}$.  
The other $\left( \displaystyle\binom{2n}{s}-4\right) 
\displaystyle\binom{2n-s}{n-s}\binom{n}{n-s}$ 
pairs of $(A_1, A_2)$ satisfies  $\rho(A_1)=\rho(A_2)=n$. 
Therefore, for $s\geq 2$, we have 
\begin{align*} 
&\textcolor{white}{=}h(R_{2n}, s, n, n) \\ 
&=h(Q_{2n}, s, n, n) \\ 
&=2\binom{n}{s}\binom{n}{n-s} 
(x_1-1)(y_1-1)  (x_3-1)^{n-s} (y_4-1)^{n-s} \\ 
&+ 2\binom{n}{s}\binom{n}{n-s} 
(x_2-1)(y_2-1)  (x_3-1)^{n-s} (y_4-1)^{n-s} \\ 
&+\left( \binom{2n}{s}-
4 \right)\binom{2n-s}{n-s}\binom{n}{n-s} (x_3-1)^{n-s} (y_4-1)^{n-s}. \qedhere 
\end{align*} 
\end{proof} 
From Propositions \ref{n_free}, \ref{cupn}, \ref{capn}, \ref{t=n}, and \ref{s_n_n},  
\begin{align} 
&T^{(2)}( R_{2n}, x_1, x_2, x_3, x_4, y_1, y_2, y_3, y_4) 
-T^{(2)}( Q_{2n}, x_1, x_2, x_3, x_4, y_1, y_2, y_3, y_4) \nonumber \\ 
&=h( R_{2n}; 0, n, n)+h(R_{2n}; 1, n,n) 
-h( Q_{2n}; 0, n, n)-h(Q_{2n}; 1, n,n) \label{2_n_n} 
\end{align} 
holds for each integer $n$ at least 3. 

From here, we calculate $h( R_{2n}; 0, n, n)$, $h(R_{2n}; 1, n,n)$,  
$h( Q_{2n}; 0, n, n)$, and $\textcolor{blue}{h}(Q_{2n}; 1, n,n)$.  \\ 
Note that there exist $\displaystyle\binom{2n}{n}\binom{n}{1}\binom{n}{1}
=n^2\displaystyle\binom{2n}{n}$ 
pairs of $(A_1, A_2)$ such that $|A_1|=|A_2|=n$ and $|A_1\cap A_2|=1$ hold 
by choosing $n$ elements of $A_1$ from $E$ first, 
and choosing a unique element of $A_1\cap A_2$ from $A_1$
secondly, and choosing a unique element of $E\setminus (A_1\cup A_2)$ from 
$E\setminus A_1$ lastly for both cases of $R_{2n}$ and $Q_{2n}$. 

In the matroid $R_{2n}$, there does not exist a pair $(A_1, A_2)$ such that 
$\rho(A_1)=\rho(A_2)=n$ as same as the condition $s\geq 2$. 
Therefore, we have 
\begin{align} 
&\textcolor{white}{=}h(R_{2n}, 1, n, n) \nonumber \\ 
&=2n^2 \left( 
(x_1-1)(y_1-1)+ (x_2-1)(y_2-1)\right) (x_3-1)^{n-1} (y_4-1)^{n-1} 
\nonumber \\ 
&+\left( \binom{2n}{n}-4 \right)n^2(x_3-1)^{n-1} (y_4-1)^{n-1}. \label{R_1_n_n} 
\end{align} 

In the matroid $Q_{2n}$, there exist two pairs of $(A_1, A_2)$ such that 
$\rho(A_1)=\rho(A_2)=|A_1|-1=|A_2|-1=n-1$ and $|A_1\cap A_2|=1$, 
that is the case of $\{A_1, A_2\}=\{ X_1, X_3\}$. 
We count the number of $(A_1, A_2)$ such that 
$\rho(A_1)=\rho(A_2)-1=|A_1|-1=|A_2|-1=n-1$ and $|A_1\cap A_2|=1$. 
Let us assume $A_1=X_1$. Then, there exist $\displaystyle\binom{n}{1}\binom{n}{1}=n^2$ 
sets such that both $|A_2|=n$ and $|A_1\cap A_2|=1$ hold. 
Among these $n^2$ choices of $A_2$, $n^2-1$ sets of $A_2$ other than 
the case $A_2=X_3$ satisfies $\rho(A_2)=n$. 
There are $n^2-1$ choices of $A_2$ such that $A_1=X_3$. 
There exist $2(n^2-1)$ pairs of $(A_1, A_2)$ such that 
$\rho(A_1)=\rho(A_2)-1=|A_1|-1=|A_2|-1=n-1$ and $|A_1\cap A_2|=1$. 
There also exist $2(n^2-1)$ pairs of $(A_1, A_2)$ such that 
$\rho(A_1)-1=\rho(A_2)=|A_1|-1=|A_2|-1=n-1$ and $|A_1\cap A_2|=1$. 
The other $n^2\displaystyle\binom{2n}{n}-2-4(n^2-1)$ pairs of 
$(A_1, A_2)$ satisfy $\rho(A_1)=\rho(A_2)=|A_1|=|A_2|=n$. 
Therefore, we have 
\begin{align} 
&\textcolor{white}{=} h(Q_{2n}; 1, n, n) \nonumber \\ 
&=2(x_1-1)(y_1-1)(x_2-1)(y_2-1)(x_3-1)^{n-1}(y_4-1)^{n-1} \nonumber \\ 
&+2(n^2-1) (x_1-1)(y_1-1)(x_3-1)^{n-1}(y_4-1)^{n-1} \nonumber \\ 
&+2(n^2-1) (x_2-1)(y_2-1)(x_3-1)^{n-1}(y_4-1)^{n-1} \nonumber \\ 
&+\left( \binom{2n}{n}n^2-4n^2+2\right) (x_3-1)^{n-1}(y_4-1)^{n-1} . 
\label{Q_1_n_n}
\end{align} 

There are $\displaystyle\binom{2n}{n}$ pairs of $(A_1, A_2)$ such that 
$|A_1\cap A_2|=0$, $|A_1|=|A_2|=n$ for both cases of $R_{2n}$ and $Q_{2n}$. 

In the matroid $R_{2n}$, there exist two pairs of $(A_1, A_2)$ such that  
$\rho(A_1)=\rho(A_2)=|A_1|-1=|A_2|-1=n-1$ and $A_1\cap A_2=\emptyset$, 
that is the case of $\{A_1, A_2\}=\{ X_1, X_2\}$. 

The condition $A_1\not\in \{ X_1, X_2\}$ and $A_2=\bar{A_1}$ 
yields $A_2\not\in \{ X_1, X_2\}$. 
Then, $\displaystyle\binom{2n}{n}-2$ pairs of $(A_1, A_2)$ 
with $A_1\cap A_2=\emptyset$ satisfy  
$\rho(A_1)=\rho(A_2)=|A_1|=|A_2|=n$. 
Therefore, we have 
\begin{align} 
&\textcolor{white}{=}h(R_{2n}; 0, n, n) \nonumber \\ 
&=
\left( 2(x_1-1) (y_1-1) (x_2-1) (y_2-1) +\binom{2n}{n}-2\right) (x_3-1)^n (y_4-1)^n.  
\label{R_0_n_n} 
\end{align} 

In the matroid $Q_{2n}$, there exist two pairs of $(A_1, A_2)$ such that 
$\rho(A_1)=\rho(A_2)-1=|A_1|-1=|A_2|-1=n-1$ and $A_1\cap A_2|=\emptyset$, 
that is the case of $A_1\in \{ X_1, X_3\}$. 
There also exist two pairs of $(A_1, A_2)$ such that 
$\rho(A_1)-1=\rho(A_2)=|A_1|-1=|A_2|-1=n-1$ and $A_1\cap A_2=\emptyset$. 
The other $\displaystyle\binom{2n}{n}-4$ pairs  of $(A_1, A_2)$ 
with $A_1\cap A_2=\emptyset$ satisfy 
$\rho(A_1)=\rho(A_2)=|A_1|=|A_2|=n$. 
Therefore, we have 
\begin{align}
&\textcolor{white}{=}h(Q_{2n}; 0, n, n) \nonumber \\ 
&=\left( 2(x_1-1) (y_1-1)+2(x_2-1) (y_2-1) + 
\binom{2n}{n}-4\right)  (x_3-1)^n (y_4-1)^n. \label{Q_0_n_n}  
\end{align} 
From (\ref{2_n_n}), (\ref{R_1_n_n}), (\ref{Q_1_n_n}), (\ref{R_0_n_n}), and (\ref{Q_0_n_n}), 
we have 
\begin{align*} 
&T^{(2)}( R_{2n}; x_1, x_2, x_3, x_4, y_1, y_2, y_3, y_4) 
-T^{(2)}( Q_{2n}; x_1, x_2, x_3, x_4, y_1, y_2, y_3, y_4) \nonumber \\ 
&=2( x_1y_1-x_1-y_1 ) (x_2y_2-x_2-y_2)(x_3y_4-x_3-y_4)
(x_3-1)^{n-1} (y_4-1)^{n-1}. 
\end{align*} 


\section*{Acknowledgements}
The authors are supported by JSPS KAKENHI (22K03277, 22K03398).




\begin{thebibliography}{99}


\bibitem{BPR}
B.~Bollob\'{a}s, L.~Pebody, and O.~Riodan, 
{Contraction-deletion invariants for graphs}, 
{\sl J.~Combin.~Theory Ser.~B} {\bf 80} (2000), 320--345.



\bibitem{Magma}
W.~Bosma, J.~Cannon, and C.~Playoust.
The Magma algebra system.~I.\ The user language, 
\emph{J.~Symb.~Comp.} {\bf 24} (1997), 235--265. 





\bibitem{DHT}
J.D.~Donald, C.A.~Holzmann, and M.~Tobey, 
{A Characterization of Complete Matroid Base Graphs}, 
{\sl J.~Combinatorial Theory Ser.~B} {\bf 22} (1977), 139--158.



\bibitem{Greene}
C.~Greene, 
{Weight enumeration and the geometry of linear codes}, 
{\sl Studia Appl.~Math.} {\bf 55} (1976), 119--128.


\bibitem{codes}
M.~Harada and A.~Munemasa,
Database of self-dual codes,
\url{https://www.math.is.tohoku.ac.jp/~munemasa/selfdualcodes.htm}. 


\bibitem{HNT}
C.A.~Holzmann, P.G.~Norton, and M.D.~Tobey, 
{A graphical representation of matroids}, 
{\sl SIAM J.~Appl.~Math.~} {\bf 25} (1973), 618--627.

\bibitem{Kahn}
J.~Kahn
{A problem of P. Seymour on nonbinary matroids}, 
{\sl Combinatorica} {\bf 5} (1985), 319--323.

\bibitem{Maurer1}
S.B.~Maurer, 
{Matroid Basis Graphs. I}, 
{\sl  J.~Combinatorial Theory Ser.~B} {\bf 14} (1973), 216--240.

\bibitem{Maurer2}
S.B.~Maurer, 
{Matroid Basis Graphs. II}, 
{\sl  J.~Combinatorial Theory Ser.~B} {\bf 15} (1973), 121--145.

\bibitem{Miezaki}
T. Miezaki, 
\url{https://miezaki.w.waseda.jp/data.html}. 

\bibitem{MOSS}
T.~Miezaki, M.~Oura, T.~Sakuma, and H.~Shinohara, 
{A generalization of the Tutte polynomials},  
{\sl Proc.~Japan Acad.~Ser.~A Math.~Sci.}
{\bf 95} (2019), 111--113. 


\bibitem{Nebe}
G.~Nebe,
{Kneser-Hecke-operators in coding theory},
{\sl Abh.~Math.~Sem. Univ.~Hamburg} {\bf 76} (2006), 79--90.

\bibitem{Oxley}
J.~Oxley,
Matroid Theory (Second Edition), 
{\sl Oxford University Press, New York}, 2011.

\bibitem{Welsh}
D.J.A.~Welsh, 
Matroid Theory, 
{\sl Academic Press, London}, 1976.


\bibitem{Mathematica}
Wolfram Research, Inc., Mathematica, Version 11.2, Champaign, IL (2017). 

\end{thebibliography}
\end{document}